\setlist[enumerate]{label=\textnormal{(\roman*)}}
\newtheorem{theorem}{Theorem}[section]
\newtheorem{corollary}[theorem]{Corollary}
\newtheorem{lemma}[theorem]{Lemma}
\newtheorem{proposition}[theorem]{Proposition}
\newtheorem{claim}[theorem]{Claim}
\theoremstyle{definition}
\newtheorem{remark}[theorem]{Remark}
\numberwithin{equation}{section}
\newcommand\RR{\mathbb{R}}
\newcommand\ud{\, \mathrm{d}}
\newcommand\e{\varepsilon}
\newcommand\md{\textnormal{Mod}}
\newcommand\vve{\vec \e\,}
\newcommand\vvep{\vec \e_\perp}
\newcommand\TD{T_\delta}
\newcommand\TS{T_*}
\newcommand\Tu{t_1}
\newcommand\Td{t_2}
\newcommand\ENE{{H^1\times L^2}}
\newcommand\LL{\mathcal L}
\newcommand\BB{\mathcal G}
\DeclareMathOperator{\spn}{span}
\DeclareMathOperator{\Id}{Id}
\DeclareMathOperator{\card}{card}
\begin{document}

\parindent=0pt

\title[Asymptotics of 1D damped NLKG equation]
{Long-time asymptotics of the one-dimensional damped nonlinear Klein-Gordon equation}

\author[R.~C\^ote]{Rapha\"el C\^ote}
\address{IRMA UMR 7501, Universit\'e de Strasbourg, CNRS, F-67000 Strasbourg, France}
\email{cote@math.unistra.fr}

\author[Y.~Martel]{Yvan Martel}
\address{CMLS, \'Ecole polytechnique, CNRS, Institut Polytechnique de Paris, 91128 Palaiseau Cedex, France}
\email{yvan.martel@polytechnique.edu}

\author[X.~Yuan]{Xu Yuan}
\address{CMLS, \'Ecole polytechnique, CNRS, Institut Polytechnique de Paris, 91128 Palaiseau Cedex, France}
\email{xu.yuan@polytechnique.edu}

\thanks{Y.~M. and X.~Y. thank IRMA, Universit\'e de Strasbourg, for its hospitality.}

\subjclass[2010]{35L71 (primary), 35B40, 37K40}

\begin{abstract}
For the one-dimensional nonlinear damped Klein-Gordon equation 
\[ \partial_{t}^{2}u+2\alpha\partial_{t}u-\partial_{x}^{2}u+u-|u|^{p-1}u=0 \quad \mbox{on $\RR\times\RR$,}\]
 with $\alpha>0$ and $p>2$, we prove that any global finite energy solution either converges to $0$ or behaves asymptotically as $t\to \infty$ as the sum of $K\geq 1$ decoupled solitary waves. In the multi-soliton case $K\geq 2$, the solitary waves have alternate signs and their distances are of order $\log t$.
\end{abstract}

\maketitle

\section{Introduction}

\subsection{Main results}
We consider the one-dimensional nonlinear focusing damped Klein-Gordon equation
\begin{equation}\label{nlkg}
\partial_{t}^{2}u+2\alpha\partial_{t}u-\partial_{x}^{2}u+u-f(u)=0, \quad (t,x)\in \RR\times\RR,\quad 
f(u)=|u|^{p-1}u,
\end{equation}
with $\alpha>0$ and $p>2$.
It follows from standard arguments that the Cauchy problem for \eqref{nlkg} is locally
well-posed in the energy space $H^1\times L^2$ (see references in~\S\ref{S2.1}). 
Moreover, the existence of solutions blowing up in finite time is well-known, \cite{BRS}.
Denote $F(u)=\frac{1}{p+1}|u|^{p+1}$. Defining the energy of a solution $\vec{u}=(u,\partial_{t}u)$ by
\begin{equation}\label{energy}
E(\vec{u})=\frac{1}{2} \int \big\{(\partial_{t} u)^{2}+(\partial_{x}u)^{2}+u^{2}-2F(u) \big\} \ud x,
\end{equation}
it holds formally
\begin{equation}\label{eq:energy}
E(\vec u(t_2))-E(\vec u(t_1)) = -2 \alpha \int_{t_1}^{t_2} \|\partial_t u(t)\|_{L^2}^2 \ud t.
\end{equation}
It is also well-known that up to sign and translation, the only stationary solution of \eqref{nlkg} is the solitary wave $(Q,0)$, where $Q$ is the explicit ground state
\begin{equation}\label{def:Q}
Q(x)=\left(\frac{p+1}{2\cosh^{2}\big(\frac{p-1}{2}x\big)}\right)^{\frac{1}{p-1}},
\end{equation}
which solves the equation
\begin{equation}\label{eq:Q}
Q''-Q+Q^{p}=0 \quad \mbox{on $\RR$.}
\end{equation}

Remarkable results and techniques developed in \cite{BRS,Cnlkg,F98,PLL1,PLL2} provide information on the long-time asymptotic behavior of global solutions of \eqref{nlkg} and of higher dimensional variants of this model for suitable nonlinearities.
From techniques in~\cite{BRS,Cnlkg}, all global solutions are proved to be
bounded in the energy space (see Theorem~\ref{pr:bound}). From~\cite{F98} and the concentration-compactness principle as stated in~\cite{PLL1,PLL2}, any global solution either converges to zero in the energy space, or decomposes along a subsequence of time into a sum of decoupled ground states (see Theorem~\ref{th:1} and Remark~\ref{rk:subseq}). 
For space dimension $2\leq N\leq 6$, it is proved in~\cite{BRS} that any global radially symmetric solution converges either to $0$ or to a single solitary wave as $t\to \infty$.
In \cite{CMYZ}, in space dimension $1\le N\le 5$ and for energy subcritical nonlinearities, global solutions of the damped Klein-Gordon equation containing two solitary waves are described and classified.

The objective of the present article is to complement those works by describing precisely the asymptotic behavior as $t\to \infty$ of any global solution of~\eqref{nlkg}.
The choice of considering the one-dimensional model is discussed in Remark~\ref{rk:dim}.

\begin{theorem}\label{th:desc}
For any global solution $\vec{u}\in C\big([0,\infty),H^{1}\times L^{2}\big)$ of~\eqref{nlkg}, one of the following three scenarios occurs:
\begin{description}
\item[Vanishing] $\vec{u}(t)$ converges exponentially to $0$ in $H^{1}\times L^{2}$ as $t\to \infty$.
\item[Single soliton] There exist $\sigma=\pm 1$, $\ell \in \RR$ such that $\vec{u}(t)$ converges exponentially to $(\sigma Q(\cdot-\ell),0)$ in $H^{1}\times L^{2}$ as $t\to \infty$.
\item[Multi-soliton] There exist $K\geq 2$, $\sigma=\pm 1$, $\ell \in \RR$ and
functions $ z_k: [0,\infty)\to \RR$, for all $k=1,\ldots,K$ such that for all $t\in (0,\infty)$,
\begin{equation}\label{eq:th:2}
\left\|u(t)-\sigma \sum_{k=1}^K(-1)^kQ(\cdot- z_k(t))\right\|_{H^1}
+\|\partial_{t}u(t)\|_{L^{2}}\lesssim t^{-1},
\end{equation}
and for any $1<\theta<\min(p-1,\frac{5}{4})$, $k=1,\dots, K$, 
\begin{equation} \label{eq:th:z}
z_k(t) = \left( k - \frac{K+1}{2} \right) \log t + \tau_k + \ell + O(t^{-\theta+1}),
\end{equation}
as $t\to \infty$, where $\tau_k$ are the constants uniquely defined by 
\begin{equation} \label{def:tau_k}
\sum_{k=1}^K \tau_k = 0, \quad e^{-(\tau_{k+1} - \tau_{k})} = \frac{2\alpha}{\kappa} \gamma_k \quad \text{where} \quad \gamma_k = \frac{k (K-k)}{2},
\end{equation}
and $\kappa>0$ defined in~\eqref{def:kappa} only depends on $p$.
\end{description}
\end{theorem}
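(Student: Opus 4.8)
The starting point is the concentration-compactness result (Theorem~\ref{th:1} and Remark~\ref{rk:subseq}): along a subsequence of times, $\vec u(t)$ decomposes into a sum of $K$ decoupled ground states $\sigma_k Q(\cdot - \hat z_k(t))$ with $\hat z_{k+1} - \hat z_k \to +\infty$, plus an error vanishing in $H^1\times L^2$, while $\|\partial_t u(t)\|_{L^2}\to 0$. The first task is to upgrade this to a decomposition valid for \emph{all} large $t$: one introduces, via the implicit function theorem and the coercivity of the linearized operator around a sum of well-separated solitons, modulation parameters $z_k(t)$, $a_k(t)$ (and possibly sign/velocity parameters) so that $u(t) = \sum_k \sigma_k Q(\cdot - z_k(t)) + a_k(t)(\partial_x Q)(\cdot - z_k(t)) + \e(t)$ with suitable orthogonality conditions on the remainder $\vve = (\e,\partial_t\e)$. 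A bootstrap/continuity argument then propagates the smallness of $\vve$ and of the $a_k$ on $[T_0,\infty)$, using the damping to control the energy dissipation.

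The core of the argument is the analysis of the modulation ODEs. Projecting the equation onto the directions $\partial_x Q(\cdot - z_k)$ produces a system of the schematic form $\dot a_k = 2\alpha\, c\, a_k - \text{(interaction terms)} + O(\|\vve\|^2)$, $\dot z_k = a_k + O(\cdots)$, where the dominant interaction between consecutive solitons is $\pm \kappa\, e^{-(z_{k+1}-z_k)}$ with a fixed sign dictated by whether $\sigma_k\sigma_{k+1}=-1$. The crucial algebraic fact — which forces \emph{alternating signs} — is that only the sign configuration $\sigma_k = (-1)^k\sigma$ is compatible with solitons that neither collide nor escape to infinite separation; with this sign choice the interaction is repulsive and the system admits the logarithmically-growing regime. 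One then needs an energy/virial-type functional (a mixed energy-momentum quantity adapted to the damping, as in \cite{CMYZ}) to close the estimate $\|\vve(t)\|_{\ENE} + \sum_k |a_k(t)| \lesssim t^{-1}$, which feeds back into the ODE system.

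With $\|\vve\|\lesssim t^{-1}$ controlled, the final step is the sharp asymptotic resolution of the reduced dynamical system for $(z_k, a_k)$. Setting $r_k = z_{k+1}-z_k$ and $b_k = \dot z_{k+1} - \dot z_k$, the leading system is, after rescaling, essentially the classical Toda-type / repulsive system whose attractive solution satisfies $e^{-r_k(t)} \sim \frac{2\alpha\gamma_k}{\kappa}\, t^{-1}$, i.e. $r_k(t) = \log t - (\tau_{k+1}-\tau_k) + o(1)$, hence \eqref{eq:th:z} with the constants $\tau_k$ pinned down by \eqref{def:tau_k} (the normalization $\sum\tau_k=0$ corresponding to the free translation parameter $\ell$). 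Establishing the error rate $O(t^{-\theta+1})$ requires a refined fixed-point or integration argument on the difference between $z_k$ and its logarithmic profile, exploiting the constraint $\theta < \frac54$ to absorb quadratic remainders; the Vanishing and Single-soliton cases are the degenerate instances $K=0$ and $K=1$ of the same scheme, where the exponential convergence comes directly from the linearized decay (spectral gap for the damped linear Klein-Gordon flow, resp. exponential stability of $(Q,0)$).

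The main obstacle, as I see it, is twofold: first, proving that the sign configuration must be alternating and that no soliton can split off or merge — this is a rigidity statement that must be extracted from the modulation system together with the monotonicity furnished by the damping, and it is delicate because a priori the separations $z_{k+1}-z_k$ are only known to go to infinity along a subsequence; second, obtaining the \emph{precise} constants $\kappa$ and $\tau_k$ and the algebraic solvability of \eqref{def:tau_k}, which requires identifying the exact coefficient in the soliton-soliton interaction (the quantity $\kappa$ defined in~\eqref{def:kappa}) and then solving the resulting closed system for the $\tau_k$ — a computation whose combinatorial structure ($\gamma_k = k(K-k)/2$) reflects the discrete Laplacian underlying the chain of interacting bumps.
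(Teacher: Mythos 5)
Your overall scheme matches the paper's: upgrade the subsequential decomposition of Theorem~\ref{th:1} to a modulated decomposition valid for all large $t$ via a bootstrap (Proposition~\ref{pr:unif}), deduce alternating signs, prove $\|\vve(t)\|_{\ENE}\lesssim t^{-1}$, and finally analyse the reduced ODE system for the centers to extract the $\log t$ profile with error $O(t^{-\theta+1})$. However, there is one genuine misconception in the proposal that would derail a careful write-up.

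You attribute the single-soliton case to ``exponential stability of $(Q,0)$.'' This is false: the linearization of~\eqref{nlkg} around $(Q,0)$ has an unstable eigenvalue $\nu^+ = -\alpha+\sqrt{\alpha^2+\nu_0^2}>0$ coming from the negative eigenvalue $-\nu_0^2$ of $\LL$ (Lemma~\ref{le:L}), so $(Q,0)$ is a saddle, not an attractor. One cannot invoke a spectral gap and conclude. What is actually needed is an argument that the unstable component $b=\sum_k(a_k^+)^2$ must decay: the paper does this by comparing $b$ with a shifted quantity $\tilde b = b - \delta^{1/2}\mathcal F$, showing $\dot{\tilde b}\geq \nu^+\tilde b$, and letting the right endpoint of integration go to infinity to force $\tilde b\leq 0$ — effectively arguing that if $b$ were ever appreciable it would grow exponentially and leave the soliton neighbourhood, contradicting global proximity. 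This is precisely the same topological/shooting input that makes the constructive Theorem~\ref{th:2} nontrivial. Your decomposition $u=\sum_k\sigma_kQ_k+a_k\partial_xQ_k+\e$ puts a velocity parameter $a_k$ (the paper's $\ell_k$) in front of $\partial_xQ_k$ but does not separate out the genuinely unstable $Y$-directions $\vec Z_k^\pm$; the whole analysis of $b$ requires that separation.

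On the alternating-sign step: your heuristic (``only $\sigma_k=(-1)^k\sigma$ is compatible with solitons that neither collide nor escape to infinite separation'') is morally in the right direction but slightly misstated — the solitons \emph{do} escape to infinite separation, at logarithmic rate. The actual mechanism is a Lyapunov-type monotonicity: if some neighbouring pair had $\sigma_k=\sigma_{k+1}$, the reciprocal $R_+=F_+^{-1}\exp(\lambda^{-1}\mathcal M)$ of the attractive interaction $F_+=\sum_{\sigma_k=\sigma_{k+1}}e^{-r_k}$ satisfies $\frac{\ud}{\ud t}R_+\leq-\lambda/2+$ small (after an auxiliary exponential correction), hence $R_+\to-\infty$, contradicting positivity. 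The positive-definiteness of the discrete-Laplacian matrix $A_n$ in the proof of \eqref{on:Sp} is what makes this work uniformly in $K$. You would need to supply something like this to go from the heuristic to a proof, since the bootstrap only gives control for $t\geq T_\delta$ and the separations diverge rather than settle.
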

\begin{remark}
The parameter $\ell$ is related to the translation invariance.
In the vanishing and the single soliton cases, the damping leads to exponential convergence as $t\to \infty$.
In the multi-soliton case, due to the nonlinear interactions between the solitary waves, the asymptotic behavior~\eqref{eq:th:z} of the centers of mass $z_k$ is related to the following nonlinear differential system
\begin{equation}\label{eq:uedo}
\left\{ \begin{aligned}
&\dot{y}_{1}=-\frac{\kappa}{2\alpha} e^{-(y_2-y_1)},\\
& \dot y_k = \frac{\kappa}{2\alpha} \left( e^{-(y_k - y_{k-1})} - e^{-(y_{k+1} - y_{k})} \right),\quad \mbox{for}\ k=2,\cdots,K-1,\\
&\dot{y}_{K}=\frac{\kappa}{2\alpha}e^{-(y_{K}-y_{K-1})}.
\end{aligned}\right.
\end{equation}
This system is studied in~\cite{CZ,MZ} in the context of blowup solutions of the nonlinear wave equation. The nonlinear interactions at short distances between the solitary waves also yield the slower rate of convervence $t^{-1}$ in \eqref{eq:th:2}.
See~\cite{Jkdv, TVNkdv,TVN} for other examples of strong interactions leading to $\log t$ distant solitary waves.
\end{remark}

Theorem~\ref{th:desc} is a version of the soliton resolution for global solutions of the one-dimensional damped nonlinear Klein-Gordon equation~\eqref{nlkg},
with convergence for the whole  sequence of time and a description of the parameters of the solitary waves.
The case of the damped Klein-Gordon equation on a bounded domain for suitable nonlinearities is addressed in \cite[Theorem~9.5.3]{CH}; see also references therein.

We refer to~\cite{C13,DKM,DJKM}
for results related to the soliton resolution conjecture for the challenging case of the undamped energy critical wave type equation.

Obviously, $\vec{u}=(0,0)$ and $\vec{u}=(Q,0)$ are examples of the first two scenarios of Theorem~\ref{th:desc}.
Our second result gives examples of the third scenario for any $K\geq 2$.

\begin{theorem} \label{th:2}
For any $K\geq 2$, $\sigma = \pm 1$ and $\ell \in \RR$, there exist global solutions of~\eqref{nlkg} satisfying
~\eqref{eq:th:2} and \eqref{eq:th:z}.
\end{theorem}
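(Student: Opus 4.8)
\emph{Proof strategy.} The plan is a backward-in-time construction: produce the desired solution as the strong limit of a sequence of approximate multi-soliton solutions whose data at times $T_n\to\infty$ are prescribed to match a fixed reference profile. By the symmetries $u\mapsto-u$ and $u(t,\cdot)\mapsto u(t,\cdot-\ell)$ of~\eqref{nlkg} it suffices to treat $\sigma=1$, $\ell=0$. For the reference centers, let $\vec y(t)=(y_1(t),\dots,y_K(t))$ be the (essentially unique, up to translations) separating solution of the first-order interaction system~\eqref{eq:uedo} with the asymptotics~\eqref{eq:th:z}; its existence, the rigidity of its asymptotic profile and the sharp remainder follow from the analysis of~\eqref{eq:uedo} in~\cite{CZ,MZ} (the alternating signs $(-1)^k$ make the pairwise interactions repulsive, which is why the $y_k$ separate like $\log t$ rather than colliding). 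Set $R(t,x)=\sum_{k=1}^K(-1)^k Q(x-y_k(t))$ and $\vec R=(R,\partial_t R)$; using~\eqref{eq:Q}, the exponential decay of $Q$ and the interaction constant $\kappa$ from~\eqref{def:kappa}, one computes that the residual $\mathrm{Err}(t):=\partial_t^2 R+2\alpha\partial_t R-\partial_x^2 R+R-f(R)$ is of the interaction size, $\|\mathrm{Err}(t)\|_{L^2}\lesssim t^{-1}$.

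For $T_n\to\infty$ let $\vec u_n$ be the solution of~\eqref{nlkg} with $\vec u_n(T_n)=\vec R(T_n)+\sum_{k=1}^K a_{n,k}\,\vec Y_k(T_n)$, where $\vec Y_k(t)$ spans the unstable direction of the linearization of~\eqref{nlkg} at the soliton $((-1)^k Q(\cdot-y_k(t)),0)$ and $(a_{n,1},\dots,a_{n,K})\in\RR^K$ is a small parameter to be chosen. On the maximal interval $[T_*,T_n]$ (with $T_*$ a large constant independent of $n$) on which $\vec u_n$ stays within the bootstrap regime, decompose $\vec u_n(t)=\big(\sum_k(-1)^k Q(\cdot-z_k(t)),0\big)+\vec\e(t)$, the centers $z_k(t)$ being fixed by $K$ orthogonality conditions (e.g.\ $\langle\vec\e(t),(\partial_x Q(\cdot-z_k(t)),0)\rangle=0$), which exist and are smooth by the implicit function theorem. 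The heart of the proof is a bootstrap estimate giving $\|\vec\e(t)\|_{\ENE}\lesssim t^{-1}$ on $[T_*,T_n]$ uniformly in $n$, together with modulation equations for the $z_k$ that are a small perturbation of~\eqref{eq:uedo}. To obtain it, one differentiates a Lyapunov functional built from the energy $E$ and localized momenta $\int\chi_k\,\partial_t u_n\,\partial_x u_n$ and combines: the dissipation in~\eqref{eq:energy}; the coercivity of $E''$ at a configuration of far-apart solitons modulo the generalized null directions and the $K$ negative directions; and the bound $\|\mathrm{Err}(t)\|_{L^2}\lesssim t^{-1}$. The $K$ negative modes, which the orthogonality conditions do not control, are handled by a shooting argument: the map sending $(a_{n,k})_k$ to the vector of unstable components of $\vec\e_n$ at time $T_*$ is continuous and, by a Brouwer-degree argument, vanishes for some choice of $(a_{n,k})_k$; for that choice $\vec u_n$ stays in the bootstrap regime on all of $[T_*,T_n]$, hence is defined there with bounds uniform in $n$.

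Since $\{\vec u_n(T_*)\}_n$ is then bounded in $\ENE$, a subsequence converges weakly; by local well-posedness and the uniform estimates, the corresponding solutions converge strongly in $C([T_*,T],\ENE)$ for every $T$ to a solution $\vec u$ of~\eqref{nlkg} on $[T_*,\infty)$ inheriting $\big\|\vec u(t)-\big(\sum_k(-1)^k Q(\cdot-z_k(t)),0\big)\big\|_{\ENE}\lesssim t^{-1}$ and the perturbed system for the $z_k$; the analysis of the separating solutions of~\eqref{eq:uedo} in~\cite{CZ,MZ} then yields~\eqref{eq:th:z} for these $z_k$, hence~\eqref{eq:th:2}, on $[T_*,\infty)$. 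Finally $\tilde u(t,x):=u(t+T_*,x)$ solves~\eqref{nlkg}, is defined for all $t\in(0,\infty)$, and still satisfies~\eqref{eq:th:2} (since $(t+T_*)^{-1}\le t^{-1}$) and~\eqref{eq:th:z} with centers $z_k(\cdot+T_*)$ and the same $\tau_k$ (since $\log(t+T_*)=\log t+O(t^{-1})=\log t+O(t^{-\theta+1})$ as $t\to\infty$, because $\theta<2$); reinstating $\sigma$ and $\ell$ by the symmetries completes the construction. Alternatively, once any non-vanishing global solution that is not a single soliton has been produced this way, Theorem~\ref{th:desc} itself upgrades its asymptotics to the precise form~\eqref{eq:th:2}--\eqref{def:tau_k}, so that only a soft version of the construction above is needed.

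I expect the main obstacle to be closing the bootstrap at the slow algebraic rate $t^{-1}$. Because the solitons are only $\log t$ apart, their interactions are of size $\sim t^{-1}$ rather than exponentially small, so there is no exponential margin to spend: one must identify in the nonlinearity exactly the term $\frac{\kappa}{2\alpha}e^{-(z_{k+1}-z_k)}$ governing~\eqref{eq:uedo}, with the correct (repulsive) sign coming from the alternation $(-1)^k$, and absorb the remaining interactions and the residual $\mathrm{Err}$, which are only polynomially small, using solely the dissipation in~\eqref{eq:energy} and the coercivity modulo modulation; in addition, the $K$ unstable modes must be controlled by the topological argument uniformly in $n$. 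It is precisely this $\log t$-separation that is responsible both for the rate $t^{-1}$ in~\eqref{eq:th:2} and for the error $t^{-\theta+1}$ in~\eqref{eq:th:z}.
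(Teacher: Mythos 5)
Your proposal is organised as a backward-in-time compactness construction (prescribe data at $T_n\to\infty$ near the reference multi-soliton profile, solve on $[T_*,T_n]$, close a bootstrap, then extract a limit), with a finite-dimensional shooting at each $n$ over the unstable soliton modes. This is the classical Martel / C\^ote--Mu\~noz scheme for Hamiltonian multi-solitons, and it would work for the \emph{undamped} Klein--Gordon equation. For the damped equation, however, this route has a genuine gap: the equation is not time-reversible, and the damping term $2\alpha\partial_t u$ becomes an \emph{anti}-damping term when one integrates backward in time. Concretely, the linear semigroup satisfies the forward decay $\|S_\alpha(t)\|\lesssim e^{-\gamma t}$, and the paper's Lyapunov functional obeys $\frac{\ud}{\ud t}\mathcal E\le -2\mu\mathcal E + \cdots$; read backward these become exponential \emph{growth} of the error on the continuous spectrum, at rate $e^{\gamma(T_n-t)}$, uniformly in the infinitely many degrees of freedom there. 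A $K$-dimensional shooting over $(a_{n,k})_k$ cannot tame this infinite-dimensional blow-up, and the residual estimate $\|\mathrm{Err}(t)\|_{L^2}\lesssim t^{-1}$ offers no help after exponential amplification over a window of length $T_n-T_*$. Coercivity of the linearized energy modulo modulation does not rescue this either, since the quantity that is coercive is precisely the one that grows backward.

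The paper instead performs a \emph{forward} construction: one fixes data at $t=0$ of the form $\sigma\sum_k(-1)^k(Q(\cdot-z_k(0)),0)+\vec W(\boldsymbol a^+(0))+\vvep(0)$, where the only free parameter is the $K$-vector $\boldsymbol a^+(0)$ projecting onto the exponentially \emph{forward-unstable} directions $\vec Y^+_k$ (the ones coming from the negative eigenvalue of $\LL$). Forward in time the damping is an ally: the bootstrap estimates of Proposition~\ref{pr:unif} control $\mathcal N$, $F$ and the damped combination $\mathcal F+b/(2\mu)$ by dissipation, so that the only obstruction to the bootstrap is the growth of $b=\sum_k(a^+_k)^2$; a transversality estimate $\dot b>0$ on the exit set and the no-retraction theorem yield a choice $\boldsymbol a^+_\sharp(0)$ for which the solution never exits. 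Once such a global solution with $K$ bubbles exists, the asymptotics \eqref{eq:th:2}--\eqref{eq:th:z} follow from the results of Section~\ref{S:5}, as you correctly anticipate in your ``Alternatively'' remark. So the soft argument you propose at the end is indeed the right idea, but the hard existence step must be done forward in time, not backward; the backward compactness construction is the piece that does not survive the damping.

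Two secondary points. First, your reference profile uses centers given by an exact solution of \eqref{eq:uedo} with the asymptotics \eqref{eq:th:z}; the paper avoids committing to such a reference by starting from arbitrary admissible data at $t=0$ and letting the modulation dynamics steer $y_k$ to the profile $\bar y_k=(k-\frac{K+1}2)\log t+\tau_k$ via Section~\ref{S:5}. Second, your identification of the main technical difficulty (closing the bootstrap at the algebraic rate $t^{-1}$ with only $\log t$ separation and no exponential slack) is accurate, and it is exactly what Lemma~\ref{le:new} and Proposition~\ref{prop:N_decay} handle via the modified quantities $\mathcal F$, $\mathcal M$, $R_\pm$.
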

 
\subsection{Notation and basic results on the solitary wave}
We denote $\langle \cdot, \cdot \rangle$ the $L^2$ scalar product for real-valued functions $u_i$ or vector-valued functions $\vec u_i = (u_i,v_i)$ ($i=1,2$)
\[ \langle u_1, u_2 \rangle := \int u_1(x) u_2(x) \ud x, \quad \langle \vec u_1, \vec u_2 \rangle := \int u_1(x) u_2(x) \ud x +\int v_1(x) v_2(x) \ud x. \]

We see from the explicit expression of~$Q$ in \eqref{def:Q} that, as $x\to \infty$,
\begin{equation}\label{asym:Q}
Q(x)=c_{Q}e^{-x}+O(e^{-2x}),\quad Q'(x)=-c_{Q}e^{-x}+O(e^{-2x})
\end{equation}
where $c_{Q}=(2p+2)^{\frac{1}{p-1}}$.
Note that by \eqref{eq:Q}, it holds $\int (\partial_x Q)^2+Q^2-Qf(Q)=0$ and so
\begin{equation}\label{eq:Epos}
E(Q,0)=\left(\frac 12 - \frac 1{p+1}\right) \int Q^{p+1} >0.
\end{equation}

Let
\[
 \LL = -\partial_x^2+1-p Q^{p-1} ,\quad
\langle \LL \e,\e\rangle = \int \big\{|\partial_x \e|^2+\e^{2} - p Q^{p-1} \e^2\big\}\ud x.
\]
We recall some standard properties of the operator $\LL $ (see \emph{e.g.}~\cite[Lemma 1]{CMkg}).
\begin{lemma}\label{le:L}
The following properties hold.
\begin{enumerate}
\item \emph{Spectral properties.} The unbounded operator $\LL $ on $L^2$ with domain $H^2$ is self-adjoint, its continuous spectrum is $[1,\infty)$, its kernel is $\spn\{Q'\}$ and it has a unique negative eigenvalue $-\nu_{0}^{2}$, with corresponding smooth normalized eigenfunction $Y$ $(\|Y\|_{L^2}=1)$.
Moreover, on $\RR$,
\begin{equation*}
 |Y^{(n)}(x) |\lesssim e^{-\sqrt{1+\nu_{0}^{2}}\left|x\right|}\quad \text{for any } n\in \mathbb{N}.
\end{equation*}
\item \emph{Coercivity property.} There exists $c>0$ such that, for all $\e\in H^{1}$,
\begin{equation*}
\langle \LL \e,\e\rangle\ge c
\|\e\|_{H^{1}}^{2}-c^{-1}
\left(\langle \e,Y\rangle^{2} + \langle \e,Q'\rangle^{2}\right).
\end{equation*}
\end{enumerate}
\end{lemma}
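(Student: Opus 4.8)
The plan is to establish the two items separately, leaning on classical one-dimensional Schr\"odinger theory for (i) and deducing (ii) from the resulting spectral picture.

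For the \emph{spectral properties}, I would first note that $Q$ is smooth and positive and $pQ^{p-1}(x)=\tfrac{p(p+1)}{2}\cosh^{-2}(\tfrac{p-1}{2}x)$ is a smooth, bounded, exponentially decaying potential; hence $\LL$ is self-adjoint on $H^2$ by Kato--Rellich (multiplication by $pQ^{p-1}$ being bounded), and, multiplication by a continuous function vanishing at infinity being $(-\partial_x^2+1)$-compact, Weyl's theorem gives $\sigma_{\mathrm{ess}}(\LL)=\sigma_{\mathrm{ess}}(-\partial_x^2+1)=[1,\infty)$, with only finitely many simple eigenvalues in $(-\infty,1)$. Differentiating \eqref{eq:Q} gives $\LL Q'=0$, so $0$ is an eigenvalue; since $Q$ is even and strictly decreasing on $[0,\infty)$, $Q'$ vanishes only at $x=0$, and Sturm oscillation theory then forces $Q'$ to be the \emph{second} eigenfunction. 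Consequently $0$ is simple --- so $\ker\LL=\spn\{Q'\}$ --- it is preceded by exactly one strictly negative simple eigenvalue which we name $-\nu_0^2$, and the associated normalized ground state $Y$ has no zero. Smoothness of $Y$ follows by bootstrapping $Y''=(1+\nu_0^2-pQ^{p-1})Y$; for the decay, with $b=\sqrt{1+\nu_0^2}>1$ and since $Y,Y'\in H^1\hookrightarrow C_0(\RR)$ while $q:=-pQ^{p-1}$ is exponentially small, a standard Levinson-type asymptotic analysis of $Y''=(b^2+q(x))Y$ shows the $L^2$ solution obeys $|Y(x)|\lesssim e^{-b|x|}$; differentiating the equation and integrating from $\pm\infty$ (using $Y,Y'\to 0$) propagates this to $|Y^{(n)}(x)|\lesssim e^{-b|x|}$ for every $n$.

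For the \emph{coercivity}, the crucial consequence of item (i) is a spectral gap: since the essential spectrum is $[1,\infty)$ and $0$ is an isolated eigenvalue, $\LL$ restricted to the invariant subspace $\{Y,Q'\}^\perp$ has spectrum in $[\lambda_*,\infty)$ for some $\lambda_*>0$, so $\langle\LL\e,\e\rangle\ge\lambda_*\|\e\|_{L^2}^2$ there. Adding a large multiple of this to the identity $\langle\LL\e,\e\rangle=\|\partial_x\e\|_{L^2}^2+\|\e\|_{L^2}^2-p\int Q^{p-1}\e^2$ absorbs the negative term and upgrades the estimate to $\langle\LL\e,\e\rangle\ge c\|\e\|_{H^1}^2$ on $\{Y,Q'\}^\perp\cap H^1$. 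For a general $\e\in H^1$, decompose $\e=\e_\perp+\langle\e,Y\rangle\,Y+\tfrac{\langle\e,Q'\rangle}{\|Q'\|_{L^2}^2}\,Q'$ with $\e_\perp\in\{Y,Q'\}^\perp$; the eigenfunctions being $\LL$-orthogonal, the cross terms drop and $\langle\LL Q',Q'\rangle=0$, $\langle\LL Y,Y\rangle=-\nu_0^2$, so $\langle\LL\e,\e\rangle=\langle\LL\e_\perp,\e_\perp\rangle-\nu_0^2\langle\e,Y\rangle^2\ge c\|\e_\perp\|_{H^1}^2-\nu_0^2\langle\e,Y\rangle^2$; bounding $\|\e_\perp\|_{H^1}^2\gtrsim\|\e\|_{H^1}^2-\langle\e,Y\rangle^2-\langle\e,Q'\rangle^2$ yields the stated inequality after relabeling $c$.

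The only genuinely non-routine step is pinning down the full spectral structure, i.e.\ using the single zero of $Q'$ together with Sturm theory to conclude that $\LL$ has exactly one negative eigenvalue and that $0$ is a simple, isolated eigenvalue; once this spectral gap is in hand, self-adjointness, the essential spectrum, the decay of $Y$, and the promotion of the $L^2$-coercivity on $\{Y,Q'\}^\perp$ to the stated $H^1$-estimate are all standard. These facts are classical; see the reference cited in the statement.
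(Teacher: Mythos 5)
Your proposal is correct: the paper itself gives no proof of this lemma, deferring entirely to the citation \cite[Lemma~1]{CMkg}, and your argument is precisely the standard one that such references contain (Kato--Rellich and Weyl for self-adjointness and the essential spectrum, $\LL Q'=0$ plus Sturm oscillation to identify $Q'$ as the second eigenfunction and hence pin down the unique simple negative eigenvalue, ODE asymptotics for the decay of $Y$, and the spectral gap on $\{Y,Q'\}^\perp$ interpolated with the quadratic-form identity to upgrade $L^2$-coercivity to $H^1$). All steps are sound, so this is a complete proof along the expected lines rather than a divergence from the paper.
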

Recall that the unique negative eigenvalue of $\LL $ is related to an instability of the solitary wave
for the equation~\eqref{nlkg}, described by the following functions:
\begin{gather}
\nu^\pm = - \alpha \pm \sqrt{\alpha^2+\nu_0^2},\quad 
\vec Y^\pm = \begin{pmatrix}
Y\\ \nu^\pm Y
\end{pmatrix},\label{eq:Y}\\
\zeta^\pm = \alpha \pm \sqrt{\alpha^2+\nu_0^2},\quad
\vec Z^\pm = \begin{pmatrix}
\zeta^\pm Y\\ Y
\end{pmatrix}.\label{eq:Z}
\end{gather}

\section{General properties of finite energy solutions}
In this section, we gather some known material on finite energy solutions of~\eqref{nlkg}.
 We repeat some proofs for the sake of completeness.

\subsection{Cauchy problem in the energy space}\label{S2.1}
It is well-known (see for instance \cite[Chapter 9.5]{CH}) that the linear problem
\[
\partial_{t}^{2} u + 2 \alpha \partial_t u - \partial_x^2 u + u = 0 \quad (t,x)\in \RR\times \RR
\]
generates a strongly continuous semigroup of contractions $(S_\alpha(t))_{t\geq 0}$ 
in $H^1\times L^2$ or $L^2\times H^{-1}$ satisfying, for some
$C\geq 1$, $\gamma>0$,
\begin{equation}\label{eq:expoSG}
\|S_\alpha(t)\|_{\mathcal L(H^1\times L^2)} \leq C e^{-\gamma t},\quad
\|S_\alpha(t)\|_{\mathcal L(L^2\times {H^{-1}})} \leq C e^{-\gamma t},
\end{equation}
for all $t\geq 0$.
Recall also that the map $u\mapsto f(u)$ is Lipschitz continuous from bounded sets of $H^1$ to $L^2$.
In particular, the standard theory of semilinear evolution equations
(see for instance \cite[Chapter 4.3]{CH} or \cite{Pazy}) yields the following result.

\begin{proposition}
For any initial data
$(u_0,v_0)\in H^1\times L^2$, there exists a unique maximal solution
\[
\vec{u}=(u,\partial_{t}u) \in C([0,T_{\max}),H^1\times L^2)\cap C^1([0,T_{\max}),L^2\times H^{-1})\]
 of~\eqref{nlkg} satisfying $\vec{u}(0)=(u_0,v_0)$. If the maximal time of existence $T_{\max}$ is finite, then
 $\lim_{t\uparrow T_{\max}} \|\vec u(t)\|_\ENE=\infty$.
 
Moreover, the map $T_{\max}:(u_0,v_0)\in H^1\times L^2\mapsto (0,\infty]$ is lower semicontinuous, and
if $(u_{0,n},v_{0,n})\to_{n\to \infty} (u_0,v_0)$ in $H^1\times L^2$ then, 
for any $0<T<T_{\max}$, 
\[
(u_n,\partial_t u_n)\to (u,\partial_t u) \quad \mbox{in $C([0,T],H^1\times L^2)$,}
\]
where $(u_n,\partial_t u_n)$ is the solution of \eqref{nlkg} corresponding to $(u_{0,n},v_{0,n})$.
\end{proposition}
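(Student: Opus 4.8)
The plan is to recast \eqref{nlkg} as a first-order semilinear evolution equation and run the standard Picard fixed point argument on the associated Duhamel formula, using the semigroup $(S_\alpha(t))_{t\geq 0}$ and its bounds \eqref{eq:expoSG}. First I would set $\vec u=(u,\partial_t u)$ and write the equation in mild form as $\vec u(t)=S_\alpha(t)(u_0,v_0)+\int_0^t S_\alpha(t-s)\,(0,f(u(s)))\ud s$. Since $u\mapsto f(u)$ is Lipschitz from bounded sets of $H^1$ to $L^2$ (as recalled above), the map $\vec u\mapsto(0,f(u))$ is Lipschitz from bounded sets of $H^1\times L^2$ to itself; combined with the uniform bound $\|S_\alpha(t)\|_{\mathcal L(H^1\times L^2)}\leq C$ for $t$ in a compact interval, a contraction argument in the ball of radius $2C\|(u_0,v_0)\|_{H^1\times L^2}$ of $C([0,T],H^1\times L^2)$, for $T$ small depending only on $\|(u_0,v_0)\|_{H^1\times L^2}$, produces a unique local solution. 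Uniqueness on the whole interval of existence follows by the usual connectedness argument, and $T_{\max}$ is then defined as the supremum of existence times.

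Next, the blow-up alternative: if $T_{\max}<\infty$ but $\liminf_{t\uparrow T_{\max}}\|\vec u(t)\|_{H^1\times L^2}<\infty$, choose $t_n\uparrow T_{\max}$ along which the norm stays bounded; since the local existence time depends only on the size of the data, solving from time $t_n$ extends the solution past $T_{\max}$ for $n$ large, a contradiction, so $\|\vec u(t)\|_{H^1\times L^2}\to\infty$ as $t\uparrow T_{\max}$. The regularity $\vec u\in C^1([0,T_{\max}),L^2\times H^{-1})$ is read off the Duhamel formula: $S_\alpha(\cdot)$ is strongly continuous on $L^2\times H^{-1}$, the generator $A$ is bounded from $H^1\times L^2$ into $L^2\times H^{-1}$, and $t\mapsto(0,f(u(t)))$ is continuous into $L^2\times H^{-1}$ because $\vec u\in C([0,T_{\max}),H^1\times L^2)$ and $f$ is locally Lipschitz $H^1\to L^2$; hence both terms of the Duhamel formula are $C^1$ into $L^2\times H^{-1}$.

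Finally, for continuous dependence and lower semicontinuity, fix $0<T<T_{\max}$; on $[0,T]$ the solution $\vec u$ stays in a bounded set $B$ of $H^1\times L^2$. For data $(u_{0,n},v_{0,n})\to(u_0,v_0)$, subtract the two Duhamel formulas, use the uniform semigroup bound and the Lipschitz constant of $\vec u\mapsto(0,f(u))$ on a fixed neighbourhood of $B$, and apply Gronwall's inequality to get $\sup_{[0,T']}\|\vec u_n(t)-\vec u(t)\|_{H^1\times L^2}\lesssim\|(u_{0,n},v_{0,n})-(u_0,v_0)\|_{H^1\times L^2}$ on a short interval $[0,T']$ whose length depends only on $B$; iterating this estimate over the finitely many subintervals covering $[0,T]$ yields the bound on all of $[0,T]$, which in particular forces $T_{\max,n}>T$ for $n$ large (lower semicontinuity of $T_{\max}$) and gives the asserted convergence in $C([0,T],H^1\times L^2)$. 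I do not expect any genuinely hard step here; the only point that needs care is that the short-time continuation and stability estimates control a time $T'$ only comparable to the inverse of the $H^1\times L^2$-size of $\vec u$ on $[0,T]$, so they must be chained over a fixed finite number of subintervals rather than applied once.
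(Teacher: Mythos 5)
Your proof is correct and follows the same route the paper relies on: the paper does not write out a proof but simply defers to the standard semilinear evolution theory of \cite{CH} (Chapter~4.3) and \cite{Pazy}, which is precisely the Duhamel--Picard fixed point, blow-up alternative, and Gronwall continuous-dependence argument you lay out. One small point worth tightening: to deduce that $t\mapsto\int_0^t S_\alpha(t-s)(0,f(u(s)))\ud s$ is $C^1$ into $L^2\times H^{-1}$, continuity of the integrand into $L^2\times H^{-1}$ alone is not enough; what makes it work is that $(0,f(u(\cdot)))$ is in fact continuous with values in $H^1\times L^2$, which is the \emph{domain} of the generator on $L^2\times H^{-1}$, and this is the hypothesis that the cited semigroup lemma actually requires.
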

In this paper, we systematically work in the framework of such maximal finite energy solutions, for which it is standard to check that the relation~\eqref{eq:energy} holds.
We call \emph{global solution} a solution for which $T_{\max}=\infty$. We do not consider solutions of \eqref{nlkg} backwards in time (\emph{i.e.} for negative values of $t$).

\subsection{Bound on global solutions}
Gathering the arguments of \cite{Cnlkg} and \cite[Proof of Lemma 2.7]{BRS}, we recall the following
bound on global solutions of \eqref{nlkg}. Note that~\cite{Cnlkg} is devoted to the undamped Klein-Gordon equation, but as suggested in the Introduction of \cite{BRS}, the proof extends to the damped case.
\begin{theorem}[\cite{BRS,Cnlkg}]\label{pr:bound}
Any global solution of~\eqref{nlkg} is bounded in $H^1\times L^2$.
\end{theorem}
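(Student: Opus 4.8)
The plan is to use the energy identity \eqref{eq:energy} together with a virial/concavity argument. Since $\|\partial_t u(t)\|_{L^2}^2 \geq 0$, \eqref{eq:energy} shows that $t\mapsto E(\vec u(t))$ is nonincreasing, so $E(\vec u(t)) \leq E(\vec u(0)) =: E_0$ for all $t\geq 0$. The obstruction to concluding directly is that $E$ is not coercive on $H^1\times L^2$, because of the term $-2F(u)$ in \eqref{energy}. We therefore first bound $\sup_{t} \|u(t)\|_{H^1}$, and then deduce the bound on $\|\partial_t u(t)\|_{L^2}$.

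To bound $\|u(t)\|_{H^1}$, argue by contradiction: assume there is a sequence $t_n\to\infty$ with $\|u(t_n)\|_{H^1}\to\infty$. From $E(\vec u(t_n))\leq E_0$ and \eqref{energy}, one gets $\|u(t_n)\|_{L^{p+1}}^{p+1} \geq \tfrac{p+1}{2}\|u(t_n)\|_{H^1}^2 - (p+1)E_0$, hence the Nehari-type quantity $I(u):=\|u\|_{H^1}^2 - \|u\|_{L^{p+1}}^{p+1}$ satisfies $I(u(t_n)) \leq -\tfrac{p-1}{2}\|u(t_n)\|_{H^1}^2 + (p+1)E_0 \to -\infty$ (here $p>2$ is used). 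So at these times the solution lies deep in the region where the solitary wave family is unstable, and one expects finite-time blow-up forward from such a time, contradicting global existence.

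To make this precise, set $V(t) := \|u(t)\|_{L^2}^2$. A direct computation using \eqref{nlkg} and \eqref{energy} gives $V'' + 2\alpha V' = (p+3)\|\partial_t u\|_{L^2}^2 + (p-1)\|u\|_{H^1}^2 - 2(p+1)E(\vec u) \geq (p+3)\|\partial_t u\|_{L^2}^2 + (p-1)\|u\|_{H^1}^2 - 2(p+1)E_0$. Together with the Cauchy--Schwarz bound $(V')^2 \leq 4V\|\partial_t u\|_{L^2}^2$, this is exactly the input of Levine's concavity method with exponent $\tfrac{p-1}{4}$: when $\|u\|_{L^{p+1}}^{p+1}$ dominates --- which, by the previous step, occurs at and near each $t_n$, where $V''$ is forced to be large and positive --- a suitable negative power of $V$ (modified by a time-dependent corrector in order to absorb the damping term $2\alpha V'$, following \cite[Proof of Lemma 2.7]{BRS} and \cite{Cnlkg}) is concave and decreasing, hence vanishes in finite time; thus $\|u(t)\|_{L^2}\to\infty$ in finite time, contradicting $T_{\max}=\infty$. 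Therefore $\sup_{t\geq 0}\|u(t)\|_{H^1} =: M < \infty$.

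Finally, from \eqref{energy}, $\|\partial_t u(t)\|_{L^2}^2 = 2E(\vec u(t)) - \|u(t)\|_{H^1}^2 + \tfrac{2}{p+1}\|u(t)\|_{L^{p+1}}^{p+1} \leq 2E_0 + \tfrac{2}{p+1}\|u(t)\|_{L^{p+1}}^{p+1}$, and since $\|u(t)\|_{H^1}\leq M$ and $H^1(\RR)\hookrightarrow L^{p+1}(\RR)$, the right-hand side is bounded by a constant depending only on $p$ and $E_0$; this gives the asserted bound in $H^1\times L^2$. The main difficulty is the concavity step in the presence of damping: the term $2\alpha V'$ destroys the naive concavity of $V^{-(p-1)/4}$, so one must work instead with a time-modified functional and verify that the damping contribution can indeed be absorbed --- this is precisely the technical content borrowed from \cite{BRS} and \cite{Cnlkg}.
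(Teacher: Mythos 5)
Your high-level plan (monotonicity of the energy together with a Levine-type concavity argument with a time-modified functional to absorb the damping) is indeed the strategy of the paper's proof, and your last step --- bounding $\|\partial_t u(t)\|_{L^2}$ directly from $E(\vec u(t))\le E_0$ once $\sup_t\|u(t)\|_{H^1}<\infty$ is known --- is correct and in fact a shortcut compared to the paper's Step 3, which instead integrates $M''$ on unit intervals and bootstraps a space-time Sobolev estimate. However, the central concavity step has a genuine logical gap.

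The gap is the passage from ``there exists a sequence $t_n\to\infty$ with $\|u(t_n)\|_{H^1}\to\infty$'' to ``finite-time blowup forward from such a $t_n$''. Levine's method does not produce blowup from the mere fact that the Nehari-type quantity $I(u(t_n))=\|u(t_n)\|_{H^1}^2-\|u(t_n)\|_{L^{p+1}}^{p+1}$ is very negative at a single time, nor even along a subsequence: the argument requires a condition that persists for \emph{all} $t$ past some $t_0$ (so that a negative power of the virial quantity is concave and decreasing on an entire half-line), and a subsequential bound on $\|u\|_{H^1}$ controls neither the derivative $V'$ at $t_n$ nor the behaviour of $V$ between the $t_n$'s. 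Negating $\sup_t\|u\|_{H^1}<\infty$ only gives the weak subsequential statement, which is precisely why a naive concavity argument cannot close. The paper handles this carefully by working with $M(t)=\tfrac12\|u(t)\|_{L^2}^2+\alpha\int_0^t\|u(s)\|_{L^2}^2\ud s$ (your ``time-modified functional'', chosen so that $M''=\tfrac12(V''+2\alpha V')$ and $M'$ admits the product structure \eqref{eq:dMc} enabling Cauchy--Schwarz) and splitting the argument in three steps: first proving $\liminf_{t\to\infty}M'(t)<\infty$, where the \emph{negation} is the strong hypothesis $\lim_{t\to\infty}M'(t)=\infty$ and concavity can legitimately be run; then upgrading to $\sup_t|M'(t)|<\infty$ by a Gronwall-type ODE comparison where the damping plays a crucial role (the functions $H$ and $K$ of Step~2 satisfy linear differential inequalities and one compares against exponentials); and finally integrating. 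Your sketch skips the upgrade step entirely and jumps to blowup from a subsequential bound, which is not valid. To repair the argument you would need to either reproduce the paper's Step~1--Step~2 structure, or give an independent argument that $\|u(t)\|_{H^1}$ unbounded forces $M'(t)\to\infty$, which is not obvious and is in fact the content of Step~2.
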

\begin{proof}
Let $\vec u$ be a global solution of \eqref{nlkg}.
Together with the energy functional $E(t):=E(\vec u(t))$ defined in \eqref{energy} and satisfying
\eqref{eq:energy}, we will use the following quantities
\begin{align*}
M(t)& := \frac 12 \|u(t)\|_{L^2}^2 + \alpha \int_0^t \|u(s)\|_{L^2}^2 \ud s,\\
W(t)& := \frac 12 \left(\|\partial_tu(t)\|_{L^2}^2 + \|\partial_x u(t)\|_{L^2}^2 +\|u(t)\|_{L^2}^2\right).
\end{align*}
By direct computations using \eqref{nlkg} and~\eqref{energy}, we check the following relations
\begin{align}
M'(t) &= \int u (t) \partial_t u (t) \ud x + \alpha \|u(t)\|_{L^2}^2 \label{eq:dM}\\
& =\int u (t) \partial_t u (t) \ud x 
 + 2\alpha\int_0^t \int u (s) \partial_t u (s) \ud x \ud s+ \alpha \|u(0)\|_{L^2}^2,\label{eq:dMc}
\\
M''(t)&= \frac{p+3}2\|\partial_tu(t)\|_{L^2}^2 + \frac{p-1}2\left(\|\partial_x u(t)\|_{L^2}^2 
+\|u(t)\|_{L^2}^2\right) - (p+1) E(t),\label{eq:d2M}\\
W'(t) & = -2\alpha\|\partial_t u(t)\|_{L^2}^2 + \int f(u(t))\partial_t u(t) \ud x.\label{eq:dW}
\end{align}
In particular, from \eqref{eq:dM} and the Cauchy-Schwarz inequality, 
\begin{equation}\label{eq:dMW}
|M'(t)| \leq (1+2\alpha) W(t).
\end{equation}
Moreover, by~\eqref{eq:energy} and~\eqref{eq:d2M},
\begin{equation}\label{eq:d2MW}
M''(t) \geq (p-1) W(t) - (p+1) E(0).
\end{equation}
The proof of the global bound now proceeds in three steps.

\emph{Step 1.} We prove that
\begin{equation}\label{eq:liminf}
\liminf_{t\to \infty} M'(t) <\infty.
\end{equation}
Proof of \eqref{eq:liminf}.
We argue by contradiction, proving that $\lim_\infty M'=\infty$ implies the following inequality,
for all $t$ large enough, 
\begin{equation}\label{eq:blowup}
 (1+\epsilon) [M'(t)]^2 < M''(t) M(t) \quad \mbox{where}\quad \epsilon>0.
\end{equation}
Then, we reach a contradiction by a standard argument.
Indeed, remark that \eqref{eq:blowup} implies $\frac{\ud^2}{\ud t^2} [ M^{-\epsilon}(t)] <0$,
and $\lim_\infty M'=\infty$ also implies $\lim_\infty M^{-\epsilon}=0$. Thus, there exists $t_1>0$
such that $\frac{\ud}{\ud t} [ M^{-\epsilon}(t_1)] <0$, and for all $t\geq t_1$,
\[
0\leq M^{-\epsilon}(t) \leq M^{-\epsilon}(t_1) + (t-t_1) \frac{\ud}{\ud t} [ M^{-\epsilon}(t_1)],
\]
which is absurd for $t\geq t_1$ large enough.

Thus, we only need to prove \eqref{eq:blowup} assuming $\lim_\infty M'=\infty$.
On the one hand, by \eqref{eq:dMc} and the Cauchy-Schwarz inequality, it holds
\[
|M'|\leq \|u\|_{L^2}\|\partial_t u\|_{L^2} + 2\alpha \left(\int_0^t \|u(s)\|_{L^2}^2 \ud s\right)^{\frac 12}
\left( \int_0^t\|\partial_t u(s)\|_{L^2}^2 \ud s\right)^{\frac 12} + \alpha \|u(0)\|_{L^2}^2.
\]
Let $\epsilon>0$ to be chosen later, we estimate
\begin{align*}
|M'|^2
&\leq (1+\epsilon) \left[ \|u\|_{L^2}\|\partial_t u\|_{L^2} + 2\alpha \left(\int_0^t \|u(s)\|_{L^2}^2 \ud s\right)^{\frac 12}
\left(\int_0^t\|\partial_t u(s)\|_{L^2}^2 \ud s\right)^{\frac 12} \right]^2\\
& \quad + \left(1+\frac1\epsilon\right)\alpha^2 \|u(0)\|_{L^2}^4.
\end{align*}
Thus, 
\begin{align*}
|M'|^2
&\leq (1+\epsilon) \left[\frac 12 \|u\|_{L^2}^2 + \alpha \int_0^t \|u(s)\|_{L^2}^2 \ud s
 \right]
\left[ 2\|\partial_t u\|_{L^2}^2 + 4 \alpha \int_0^t\|\partial_t u(s)\|_{L^2}^2\ud s\right]\\
& \quad + \left(1+\frac1\epsilon\right)\alpha^2 \|u(0)\|_{L^2}^4\\
&\leq (1+\epsilon) M
\left[ 2\|\partial_t u\|_{L^2}^2 + 4 \alpha \int_0^t\|\partial_t u(s)\|_{L^2}^2 \ud s\right]
 + \left(1+\frac1\epsilon\right)\alpha^2 \|u(0)\|_{L^2}^4.
\end{align*}
On the other hand, by \eqref{eq:energy} and~\eqref{eq:d2M},
\begin{align*}
M'' 
&= 2 \|\partial_t u\|_{L^2}^2 + (p-1) W + 2\alpha (p+1) \int_0^t \|\partial_t u(s)\|_{L^2}^2 \ud s 
 - (p+1) E(0)\\
 &\geq (1+\epsilon)^3 \left[ 2 \|\partial_t u\|_{L^2}^2 + 4\alpha\int_0^t \|\partial_t u(s)\|_{L^2}^2 \ud s\right]
 +\frac{p-1}2 W(t)
 - (p+1) E(0),
\end{align*}
by fixing any $\epsilon$ such that
\[
0<\epsilon < \left(\frac{p+7}{8}\right)^{\frac 13} -1.
\]
 In particular, since $\lim_\infty W =\infty$ by \eqref{eq:dMW} and the 
assumption $\lim_\infty M'=\infty$, we have for $t$ large enough,
\[
M'' \geq (1+\epsilon)^3 \left[ 2 \|\partial_t u\|_{L^2}^2 + 4\alpha\int_0^t \|\partial_t u(s)\|^2 \ud s\right].
\]
Thus,
\[
(1+\epsilon)^2 |M'|^2 \leq MM''+\left(1+\frac1\epsilon\right)\alpha^2 \|u(0)\|_{L^2}^4,\]
and using again $\lim_\infty M'=\infty$ we obtain \eqref{eq:blowup} for any $t$ large enough.

\emph{Step 2.} We prove that
\begin{equation}\label{eq:limsup}
\sup_{t\in [0,\infty)} | M'(t) |<\infty.
\end{equation}
Proof of \eqref{eq:limsup}. 
Combining~\eqref{eq:dMW} and~\eqref{eq:d2MW}, we obtain
\begin{equation*}
M''(t)\geq \frac{p-1}{1+2\alpha} |M'(t)| - (p+1) E(0).
\end{equation*}
Let
\[
H(t) = \frac{p-1}{1+2\alpha} M'(t) - (p+1) E(0).
\]
Then, $H'(t) = \frac{p-1}{1+2\alpha} M''(t) \geq \frac{p-1}{1+2\alpha} H(t)$.
If there exists $t\geq 0$ such that $H(t)>0$, then $\lim_{\infty} H = \infty$,
contradicting \eqref{eq:liminf}. It follows that for all $t\geq 0$,
\[ M'(t) \leq \frac{1+2\alpha}{p-1}(p+1) E(0).\]
Similarly, let
\[
K(t) = -\frac{p-1}{1+2\alpha} M'(t)+ (p+1) E(0).
\]
Then, $K'(t) = -\frac{p-1}{1+2\alpha} M''(t) \leq -\frac{p-1}{1+2\alpha} K(t)$.
It follows that $K(t) \leq e^{-\frac{p-1}{1+2\alpha}t} K(0),$ for all $t\geq 0$.
Thus, 
\[
M'(t) \geq - \frac{1+2\alpha}{p-1} \left( (p+1) E(0) +|K(0)|\right).
\]
and \eqref{eq:limsup} is proved.

\emph{Step 3.} Last, we prove the global bound
\begin{equation}\label{eq:boundW}
\sup_{t\in [0,\infty)} | W(t) |<\infty.
\end{equation}
Proof of \eqref{eq:boundW}.
We rewrite \eqref{eq:d2MW} as
\begin{equation*}
W(t)
\leq \frac1{p-1} M''(t) + \frac{p+1}{p-1} E(0).
\end{equation*}
Integrating on $(t,t+1)$ and using \eqref{eq:limsup}, we observe that
\begin{equation}\label{eq:intW}
\sup_{t\geq 0} \int_t^{t+1} W(s) \ud s <\infty.
\end{equation}
Moreover, by \eqref{eq:dW},
\[
W'\leq -2\alpha\|\partial_t u\|_{L^2}^2 + \int |u|^p |\partial_t u|
\leq \frac 12 \|\partial_t u\|_{L^2}^2+ \frac 12 \int |u|^{2p}
\leq W+\frac 12 \int |u|^{2p}.
\]
For $t\geq 1$ and $\tau\in (0,1)$, integrating on $(t-\tau,t)$, we find
\begin{align*}
W(t) 
&\leq W(t-\tau)+ \int_{t-\tau}^t W(s) \ud s +\frac 12 \int_{t-\tau}^t \int |u(s)|^{2p} \ud x \ud s\\
&\leq W(t-\tau)+ \int_{t-1}^t W(s) \ud s +\frac 12 \int_{t-1}^t \int |u(s)|^{2p} \ud x \ud s.
\end{align*}
Using the Sobolev inequality (in space-time) for the last term, we obtain, for some constants $C>0$,
\begin{align*}
W(t)
&\leq W(t-\tau) + \int_{t-1}^t W(s) \ud s+ C \|u\|_{H^1((t-1,t)\times \RR)}^{2p}\\
&\leq W(t-\tau) + \int_{t-1}^t W(s) \ud s + C \left(\int_{t-1}^t W(s) \ud s\right)^{p}.
\end{align*}
Integrating in $\tau\in(0,1)$ and using~\eqref{eq:intW}, we find~\eqref{eq:boundW}.
\end{proof}

\subsection{Decomposition of any global solution along a subsequence}
\begin{theorem}[\cite{F98,PLL1,PLL2}]\label{th:1}
Any global solution $\vec{u}$ of~\eqref{nlkg}
\begin{itemize}
\item either converges to $0$, \emph{i.e.} $\lim_{t\to\infty} \|\vec u(t)\|_{H^{1}\times L^2} =0\,;$
\item or is asymptotically a (multi-)solitary wave along a subsequence of time: there exist $K\geq 1$,
 a sequence $t_{n}\to \infty$, a sequence $\left(\xi_{k,n}\right)_{k\in\{1,\ldots,K\}}\in \mathbb{R}^{K}$ and signs
$\sigma_{k}=\pm 1$, for any $k\in \{1,\ldots, K\}$, such that
\begin{equation}\label{eq:decompo}
\lim_{n\to\infty}\bigg\{\Big\|u(t_{n})-\sum_{k=1}^{K}\sigma_{k}Q(\cdot-\xi_{k,n})\Big\|_{H^1}
+\|\partial_{t}u(t_{n})\|_{L^{2}}\bigg\}=0
\end{equation}
and in the case $K\geq 2$,
\begin{equation*}
\lim_{n\to \infty} \xi_{k+1,n}-\xi_{k,n}= \infty\quad \mbox{for any $1\leq k\leq K-1$.}
\end{equation*}
\end{itemize}
\end{theorem}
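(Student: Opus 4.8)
The plan is to combine the dissipation identity~\eqref{eq:energy} with the concentration-compactness method, following~\cite{F98,PLL1,PLL2}, in four steps. \emph{First, the dissipation.} Since $\{\vec u(t)\}_{t\ge0}$ is bounded in $\ENE$ by Theorem~\ref{pr:bound}, Sobolev embedding makes $t\mapsto E(\vec u(t))$ bounded below; being nonincreasing by~\eqref{eq:energy} it converges to some $E_\infty\in\RR$ and $\int_0^\infty\|\partial_t u(t)\|_{L^2}^2\ud t<\infty$. Hence one can pick $t_n\to\infty$ with $\|\partial_t u(t_n)\|_{L^2}\to0$ and $\int_{t_n-1}^{t_n+1}\|\partial_t u(s)\|_{L^2}^2\ud s\to0$, and set $w_n:=u(t_n)$, bounded in $H^1(\RR)$.

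\emph{Second, a profile decomposition.} Applying the concentration-compactness principle of~\cite{PLL1,PLL2} to $(w_n)$ — space translation being the only noncompact invariance of~\eqref{nlkg} in dimension one — one gets, along a subsequence,
\[
 w_n=\sum_{k=1}^{K}\phi^k(\cdot-x_n^k)+r_n,\qquad K\ge0,
\]
with $|x_n^k-x_n^{k'}|\to\infty$ for $k\ne k'$, $\|r_n\|_{L^\infty}\to0$, and $w_n(\cdot+x_n^k)\rightharpoonup\phi^k\ne0$ in $H^1$. The key observation is that each $\phi^k$ solves the stationary equation $-\phi''+\phi-f(\phi)=0$: the translated solutions $s\mapsto u(t_n+s,\cdot+x_n^k)$ converge in $\mathcal D'\big((-1,1)\times\RR\big)$ to $\phi^k$, which is $s$-independent because $\partial_s u(t_n+s)\to0$ in $L^2\big((-1,1)\times\RR\big)$ by the choice of $t_n$, while the nonlinearity passes to the limit by the compactness of $H^1_{\mathrm{loc}}\hookrightarrow L^p_{\mathrm{loc}}$. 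As the only nonzero $H^1(\RR)$ solutions of this equation are $\pm Q(\cdot-a)$ (classical ODE analysis of~\eqref{eq:Q}), one absorbs the translations into $x_n^k$ and writes $\phi^k=\sigma_k Q$, $\sigma_k=\pm1$. The almost-orthogonality $\|w_n\|_{H^1}^2=K\|Q\|_{H^1}^2+\|r_n\|_{H^1}^2+o_n(1)$ forces $K$ finite; relabelling (after a further subsequence) so that $x_n^1<\dots<x_n^K$ and setting $\xi_{k,n}:=x_n^k$ gives $\xi_{k+1,n}-\xi_{k,n}\to\infty$.

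\emph{Third — and this is the step where I expect the real difficulty, and where the damping is decisive — one must show $r_n\to0$ in $H^1$.} From $\|r_n\|_{L^\infty}\to0$ one has $\int F(r_n)\lesssim\|r_n\|_{L^\infty}^{p-1}\|r_n\|_{L^2}^2\to0$, so the Brezis--Lieb lemma and the orthogonality of the profiles give, with $\mathcal E(w):=E(w,0)$,
\[
 \mathcal E(w_n)=K\,E(Q,0)+\tfrac12\|r_n\|_{H^1}^2+o_n(1),
\]
while $\mathcal E(w_n)=E(\vec u(t_n))-\tfrac12\|\partial_t u(t_n)\|_{L^2}^2\to E_\infty$; hence $\tfrac12\|r_n\|_{H^1}^2\to E_\infty-K\,E(Q,0)\ge0$. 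Thus $r_n\to0$ in $H^1$ is equivalent to $E_\infty=K\,E(Q,0)$, i.e. to the absence of radiative energy at the times $t_n$ — and this is what the damping should force. The natural route is to check that $(w_n)$ is, asymptotically, a Palais--Smale sequence for $\mathcal E$: by~\eqref{nlkg} at $t=t_n$, $-w_n''+w_n-f(w_n)=-\partial_t^2 u(t_n)-2\alpha\partial_t u(t_n)$ in $H^{-1}$, with $\|\partial_t u(t_n)\|_{L^2}\to0$; the missing ingredient is a control of $\partial_t^2 u$ along $t_n$, which must be squeezed out of $\int_0^\infty\|\partial_t u\|_{L^2}^2<\infty$ by a finer (second-order, or time-averaged) dissipation estimate, since $L^2$-in-time smallness of $\partial_t u$ does not localize $\partial_t^2 u$ on its own. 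Once this is in hand, $(w_n)$ is Palais--Smale at level $E_\infty$, and a Struwe-type global compactness lemma for $-\phi''+\phi-f(\phi)=0$ (with only $\pm Q(\cdot-a)$ as nonzero $H^1$ solutions) yields $r_n\to0$ in $H^1$ and $E_\infty=K\,E(Q,0)$; this is the argument of~\cite{F98}.

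\emph{Finally, one concludes.} Combining the previous steps, $\big\|w_n-\sum_{k=1}^K\sigma_k Q(\cdot-\xi_{k,n})\big\|_{H^1}+\|\partial_t u(t_n)\|_{L^2}\to0$, with $\xi_{k+1,n}-\xi_{k,n}\to\infty$ when $K\ge2$. If $K\ge1$ this is~\eqref{eq:decompo}. If $K=0$, then $\vec u(t_n)\to(0,0)$ in $\ENE$; by the exponential decay~\eqref{eq:expoSG} of the linear damped semigroup and $\|f(u)\|_{L^2}\lesssim\|u\|_{H^1}^{p}$ (Sobolev), a Duhamel--Gronwall argument shows that smallness of $\|\vec u(t_0)\|_{\ENE}$ at one time propagates to exponential decay, so $\|\vec u(t)\|_{\ENE}\to0$ as $t\to\infty$ — the first alternative.
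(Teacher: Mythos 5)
Your overall scaffolding is right, and you correctly identified the crux: to make $(u(t_n))$ a Palais--Smale sequence for the stationary functional you need $\partial_t^2 u(t_n)\to 0$ in $H^{-1}$, not just $\partial_t u(t_n)\to 0$ in $L^2$. But this is exactly the step you leave unproved, saying it ``must be squeezed out of $\int_0^\infty\|\partial_t u\|_{L^2}^2<\infty$ by a finer dissipation estimate.'' That is an acknowledged but genuine gap, and it cannot be filled by merely choosing $t_n$ well: the energy dissipation only controls $\partial_t u$ in $L^2_{t,x}$, and plugging the equation $\partial_t^2 u=\partial_x^2 u-u+f(u)-2\alpha\partial_t u$ into this gives boundedness of $\partial_t^2 u$ in $H^{-1}$, not vanishing.

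The paper closes the gap with a Duhamel argument that is worth internalizing. Set $\vec v=(\partial_t u,\partial_t^2 u)$ and observe that $v=\partial_t u$ solves the linear damped Klein--Gordon equation with source $p|u|^{p-1}v$, so
\[
\vec v(t)=S_\alpha(t)\vec v(0)+\int_0^t S_\alpha(t-s)\,\bigl(0,\,p|u(s)|^{p-1}v(s)\bigr)\ud s.
\]
Then the exponential decay~\eqref{eq:expoSG} of $S_\alpha$ in $L^2\times H^{-1}$, the uniform bound on $u$ in $H^1$ (Theorem~\ref{pr:bound}), and $v\in L^2((0,\infty)\times\RR)$ from~\eqref{eq:energy} combine (split $\int_0^t=\int_0^{t/2}+\int_{t/2}^t$ and use Cauchy--Schwarz) to give $\|\partial_t u(t)\|_{L^2}+\|\partial_t^2 u(t)\|_{H^{-1}}\to 0$ as $t\to\infty$ --- for \emph{all} $t$, not merely along a subsequence, which the paper uses later (Remark~\ref{rk:subseq}). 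Once this is in hand, for any $t_n\to\infty$ the sequence $u_n=u(t_n)$ is bounded in $H^1$ and satisfies $\|\partial_x^2 u_n-u_n+f(u_n)\|_{H^{-1}}\to 0$, and the conclusion is read off directly from \cite[Appendix~A]{PLL1} and \cite[Theorem~III.4]{PLL2} (which already produce the profile decomposition with a remainder vanishing strongly in $H^1$, so your separate Brezis--Lieb/energy-bookkeeping detour is unnecessary once the $H^{-1}$ stationarity is available). Your remark that each bubble is forced to solve the stationary equation via local-in-time convergence of $\partial_s u(t_n+\cdot)$ is a nice observation, but it does not substitute for the $H^{-1}$ control needed to kill the remainder. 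In short: supply the Duhamel/semigroup estimate for $\vec v=(\partial_t u,\partial_t^2 u)$ and the rest of your argument becomes the paper's.
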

\begin{remark}
It is clear that if a global solution
$\vec u$ satisfies \eqref{eq:decompo} for two different sequences $(t_n)_{n}$ and $(t_n')_{n}$, then the 
number $K\geq 1$ of solitary waves is the same for both sequences.
Indeed, by monotonicity of the energy \eqref{eq:energy} and \eqref{eq:Epos}, it holds
\begin{equation}\label{eq:energlimit}
\lim_{t\to \infty} E(\vec u(t)) = K\, E(Q,0) >0.
\end{equation}
\end{remark}

\begin{remark}\label{rk:subseq}
The following stronger result holds in the framework of Theorem~\ref{th:1}: for any sequence $(t_n)_n$ with $t_n\to \infty$, the multi-solitary wave behavior~\eqref{eq:decompo} is satisfied for a subsequence of $(t_n)_n$.
This result, valid on any global solution of \eqref{nlkg}, is quite remarkable. However, it does not fully describe the asymptotic behavior of global solutions as $t\to \infty$, which is the objective of Theorem~\ref{th:desc}.
\end{remark}

\begin{remark}\label{rk:dim}
Note that \cite{BRS,Cnlkg,CMYZ,F98,PLL1,PLL2,LZ} also apply to the multi-dimensional case, under suitable restrictions on the exponent $p$ of the nonlinearity, or for radially symmetric solutions.
However, for space dimensions greater than $1$, the existence of bound states solutions of $\Delta w-w+f(w)=0$ other than the ground state $Q$, together with the possibility of involved geometric configurations of solitary waves, complicate the analysis. This is why we restrict to dimension $1$ in the present paper.
\end{remark}

\begin{proof}
Let $\vec u$ be a global solution of \eqref{nlkg}; in particular, by Theorem~\ref{pr:bound},
it is bounded in $H^1\times L^2$. The proof proceeds in two steps.

\emph{Step 1.} We prove that
\begin{equation}\label{eq:Hm1z}
\lim_{t\to \infty} \left\{\|\partial_tu (t)\|_{L^2} + \|\partial_t^2 u (t)\|_{H^{-1}}\right\}= 0.
\end{equation}
With the notation of \S\ref{S2.1}, 
the function $\vec v(t)=(v(t),\partial_t v(t))=(\partial_t u(t),\partial_t^2 u(t))$ satisfies
\[
\vec v(t) = S_\alpha(t) \vec v(0) + \int_0^t S_\alpha (t-s) (0,p|u(s)|^{p-1} v(s)) \ud s.
\]
By the bound in $H^1\times L^2$ and \eqref{eq:energy}, it follows that 
$v\in L^2((0,\infty)\times \RR)$.
Moreover, using estimate~\eqref{eq:expoSG}, $\|\cdot\|_{H^{-1}}\lesssim \|\cdot\|_{L^2}$
and $\|\cdot\|_{L^\infty}\lesssim \|\cdot\|_{H^1}$, we have, for all $t\geq 0$,
\[
\|\vec v(t)\|_{L^2\times H^{-1}}
\lesssim e^{-\gamma t} \|\vec v(0)\|_{L^2\times H^{-1}}
+ \|u\|_{L^\infty([0,\infty),H^1)}^{p-1} \int_0^t e^{-\gamma (t-s)} \| v(s)\|_{L^2} \ud s.
\]
Splitting the integral $\int_0^t = \int_0^{t/2}+\int_{t/2}^t$ in the last term and using the Cauchy-Schwarz inequality
\[
\int_0^t e^{-\gamma (t-s)} \| v(s)\|_{L^2} \ud s
\lesssim e^{-\gamma t/2} \| v\|_{L^2((0,\infty)\times \RR)} 
+ \| v\|_{L^2((t/2,\infty)\times \RR)},
\]
which implies $\lim_{t\to \infty}\|\vec v(t)\|_{L^2\times H^{-1}} =0$ and thus \eqref{eq:Hm1z}.

\emph{Step 2.}
Let $(t_n)_n$ be any sequence such that $t_n\to \infty$ and let $u_n(x)=u(t_n,x)$. Then,
by \eqref{eq:Hm1z} and equation~\eqref{nlkg}, it follows that
\begin{equation*}
\lim_{n\to \infty} \|\partial_x^2 u_n - u_n + |u_n|^{p-1} u_n\|_{H^{-1}}= 0.
\end{equation*}
Moreover, the sequence $(u_n)_n$ is bounded in $H^1$.
Then, the alternative stated in the Theorem follows directly from results from concentration-compactness arguments in \cite[Appendix A]{PLL1} and \cite[Theorem~III.4]{PLL2}. 
In the present framework, we use \cite[Theorem~III.4]{PLL2} in space dimension $1$ and with the constant coefficient elliptic operator $-\partial_x^2+1$, which simplifies the statement.
Observe that in dimension $1$, we enjoy the fact that the only non trivial solutions of 
$\partial_x^2 w - w + f(w)=0$ are $w=\pm Q$, up to space translation.

In the case where $\lim_{n\to\infty} \|\vec u(s_n)\|_{H^{1}\times L^2} =0$, for some sequence of time $(s_n)_n$, 
$s_n\to \infty$, then it follows from \eqref{eq:energy} that $\lim_{t\to \infty} E(\vec u(t))=0$. Thus, by \eqref{eq:Epos} and the previous arguments 
applied to any sequence $(t_n)_n$, with $t_n\to \infty$, there exists a subsequence $(t_{n'})_{n'}$ such that $\lim_{n'\to\infty} \|\vec u(t_{n'})\|_{H^{1}\times L^2} =0$.
This implies that $\lim_{t\to\infty} \|\vec u(t)\|_{H^{1}\times L^2} =0$ as stated in the first part of the alternative.
\end{proof}

\section{Dynamics close to decoupled solitary waves}\label{S:2}
In this Section, we prove general results on solutions of \eqref{nlkg} close to the sum of $K\geq 1$ decoupled solitary waves. For any $k\in \{1,\cdots,K\}$, let $\sigma_{k}=\pm 1$ and let
$t\mapsto (z_{k}(t),\ell_{k}(t) )\in \mathbb{R}^{2}$ be $\mathcal C^1$ functions such that
\begin{equation}\label{on:zk}
\sum_{k=1}^K |\ell_k|\ll 1 \quad \mbox{and, if $K\geq 2$, for any $k=1,\ldots,K-1$,}\quad
z_{k+1}-z_k\gg 1.
\end{equation}
For $k\in\{1,\cdots,K\}$, define
\begin{equation}\label{def:gs}
Q_{k}=\sigma_{k}Q(\cdot-z_{k}),\quad
\vec{Q}_{k}=\begin{pmatrix} Q_{k} \\ -{\ell}_{k}\partial_{x} Q_{k} \end{pmatrix},
\end{equation}
and similarly (see \eqref{eq:Y}-\eqref{eq:Z})
\begin{equation*}
Y_k=\sigma_{k} Y(\cdot-z_{k}),\quad
\vec Y_k^\pm= \sigma_{k} \vec Y^\pm (\cdot-z_{k}),\quad
\vec Z_k^\pm= \sigma_{k} \vec Z^\pm (\cdot-z_{k}).
\end{equation*}
Set
\begin{equation}\label{def:G}
R=\sum_{k=1}^{K}Q_k, \quad 
\vec{R}=\sum_{k=1}^{K}\vec{Q}_k,
\quad 
G=f\left(\sum_{k=1}^{K}Q_{k}\right)-\sum_{k=1}^{K}f\left(Q_{k}\right).
\end{equation}
\subsection{Leading order of the nonlinear interactions}
\begin{lemma}
Assuming \eqref{on:zk}, for any $k,k'\in \{1,\ldots,K\}$, $k'\neq k$, it holds.
\begin{enumerate}
\item\emph{Bounds.} For any $0<m'<m$,
\begin{gather}
\int |Q_{k'} Q_k |^m \lesssim e^{-m' |z_{k'}-z_{k}|},\quad
\int |Q_{k'}| |Q_k|^{1+m} \lesssim e^{-|z_{k'}-z_{k}|}, \label{tech1}\\
\int\bigg|F(R)-\sum_{k=1}^{K}F(Q_k)-\sum_{k\ne k'}f(Q_k)Q_{k'}\bigg|
 \lesssim \sum_{k=1}^{K-1}e^{-\frac 54(z_{k+1}-z_{k})},\label{tech3}\\
 \|G\|_{L^2}\lesssim\sum_{k'\ne k} \|Q_{k'}^{p-1}Q_k\|_{L^2}
\lesssim \sum_{k=1}^{K-1}e^{-(z_{k+1}-z_{k})}.\label{tech2}
\end{gather}
\item\emph{Asymptotics.}
\begin{equation}\label{tech4}
\big|\langle f(Q_{k}),Q_{k'}\rangle-\sigma_{k}\sigma_{k'}c_{1}\kappa e^{-|z_{k}-z_{k'}|}\big|\lesssim e^{-\frac{3}{2}|z_{k}-z_{k'}|}
\end{equation}
where
\begin{equation}\label{def:kappa}
\kappa:=\frac{c_{Q}}{c_{1}}\int Q^{p}(x)e^{-x}\ud x>0\quad\mbox{and}\quad c_{1}:=\|Q'\|_{L^{2}}^{2}.
\end{equation}
\item\emph{Leading order interactions.} Let any $1<\theta<\min (p-1,\frac{3}{2})$.
\begin{itemize}
\item If $K=1$ then $G=0\,;$
\item If $K\geq 2$ then
\begin{gather}
\left|\langle G,\partial_{x}Q_{1}\rangle+c_{1}\kappa\sigma_{1}\sigma_{2} e^{-(z_{2}-z_{1})} \right|
\lesssim \sum_{l=1}^{K-1}e^{-\theta(z_{l+1}-z_{l})};\label{on:G1}\\
\left|\langle G,\partial_{x}Q_{K}\rangle-c_{1}\kappa\sigma_{K-1}\sigma_{K} e^{-(z_{K}-z_{K-1})}\right|
\lesssim \sum_{l=1}^{K-1}e^{-\theta(z_{l+1}-z_{l})};\label{on:GK}
\end{gather}
\item If $K\geq 3$ then, for any $k\in \{2,\ldots,K-1\}$,
\begin{equation}\label{on:G}\begin{aligned}
&\left|\langle G,\partial_{x}Q_{k}\rangle-c_{1}\kappa \sigma_{k} \left[\sigma_{k-1}e^{-(z_{k}-z_{k-1})}-\sigma_{k+1} e^{-(z_{k+1}-z_{k})}\right]\right|\\ 
&\quad\lesssim \sum_{l=1}^{K-1}e^{-\theta(z_{l+1}-z_{l})}.
\end{aligned}\end{equation}
\end{itemize}
\end{enumerate}
\end{lemma}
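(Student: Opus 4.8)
The plan is to exploit the exponential localization of $Q$, $Q'$ and $Y$ together with the decoupling hypothesis~\eqref{on:zk}, reducing everything to elementary convolution-type estimates on $\RR$.

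\medskip

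\textbf{Part (i): Bounds.} The first line of \eqref{tech1} follows from splitting $\RR$ at the midpoint $\frac{z_k+z_{k'}}{2}$: on the half-line closer to $z_{k'}$ one has $|Q_k|^m \lesssim e^{-m|x-z_k|}\lesssim e^{-m\frac{|z_{k'}-z_k|}{2}}e^{-\frac m2|x-z_{k'}|}\cdot(\text{correction})$, and symmetrically on the other side; integrating the remaining factor gives a constant, and one absorbs the loss $m\to m'$ (needed only when $p-1<1$, to keep the power of $Q_k$ integrable after raising to a power $<1$) into the exponential rate. The second bound in \eqref{tech1} uses $Q^{1+m}$ which is already integrable, so no loss is needed there. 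For \eqref{tech2}, write $G=f(R)-\sum f(Q_k)$ and use the pointwise inequality $|f(\sum a_k)-\sum f(a_k)|\lesssim \sum_{k\ne k'}|a_{k'}|^{p-1}|a_k|$ (valid for $p>2$, hence $p-1>1$, by the mean value theorem applied termwise), then apply the $L^2$ version of the first estimate in \eqref{tech1} with $m=p-1>m'=1$. For \eqref{tech3}, Taylor-expand $F(R)=F(Q_k+\sum_{j\ne k}Q_j)$ near each dominant bump: $F(R)-\sum_k F(Q_k)-\sum_{k\ne k'}f(Q_k)Q_{k'}$ is a sum of quadratic-and-higher cross terms, each controlled by $|Q_{k'}|^{\min(2,p)}|Q_k|^{\cdots}$ type products; since $p>2$ the worst exponent is the quadratic one, giving $e^{-\frac54 d}$ once one notes $2\cdot\frac58=\frac54$ comes from splitting a $|Q_{k'}Q_k|^{\ge 5/4}$-type integral — more precisely one keeps enough integrability and writes $\int |Q_{k'}|^{a}|Q_k|^{b}$ with $a+b$ slightly above $1$ to land on the rate $\frac54$, using $p>2$ to have room.

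\medskip

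\textbf{Part (ii): Asymptotics.} For \eqref{tech4}, assume WLOG $z_k<z_{k'}$ and set $d=z_{k'}-z_k$. Write $\langle f(Q_k),Q_{k'}\rangle = \sigma_k\sigma_{k'}\int f(Q)(x)\,Q(x-d)\,\ud x$. The integral concentrates where $Q(x-d)$ is not exponentially small relative to $d$, i.e. for $x$ bounded; there $Q(x-d)=c_Q e^{-(d-x)}+O(e^{-2(d-x)})$ by~\eqref{asym:Q}, so $\int f(Q)(x) Q(x-d)\ud x = c_Q e^{-d}\int Q^p(x) e^{x}\ud x + O(e^{-2d}\cdot\text{(growing factor)})$. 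One must be slightly careful: $Q^p(x)e^x$ decays like $e^{-(p-1)x}$ as $x\to+\infty$ and like $e^{x}\cdot Q^p\to 0$ as $x\to-\infty$ (since $Q^p\sim e^{-p|x|}$), so the integral converges; by the evenness of $Q$ this equals $\int Q^p(x)e^{-x}\ud x$, matching the definition~\eqref{def:kappa} of $\kappa$ (with the factor $c_1$ dividing out). The error term: the contribution of $x\gtrsim d/2$ to the integral is bounded by $e^{-(p-1)d/2}\lesssim e^{-d/2}$ which is worse than claimed, so one instead cuts at $x= \eta d$ for small $\eta$ and bounds the tail by $e^{-(p-1)\eta d}$ — but since $p>2$ and we only need rate $\frac32$, a cleaner route is: replace $Q(x-d)$ by $c_Q e^{-(d-x)}$ exactly, with the error $O(e^{-2(d-x)})$ contributing $\int f(Q)(x)e^{2x}\ud x\cdot O(e^{-2d})$; convergence of $\int Q^p e^{2x}$ holds iff $p>2$, which is exactly our hypothesis. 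That gives the rate $2>\frac32$, and the subleading $e^{-(d-x)}$ times the region $x<0$ where $f(Q)$ itself decays fast is harmless.

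\medskip

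\textbf{Part (iii): Leading order interactions.} Here $G=0$ when $K=1$ is immediate. For $K\ge2$, compute $\langle G,\partial_x Q_1\rangle$. Since $\partial_x Q_1$ is localized at $z_1$ and, among the cross terms in $G$, only the interaction of the bump at $z_1$ with its nearest neighbor at $z_2$ contributes at the leading order, write $G = f(Q_1+Q_2+\cdots)-\sum f(Q_k)$ and expand near $x\approx z_1$: there $R\approx Q_1 + Q_2(z_1)+\cdots\approx Q_1 + (\text{small})$, so $G \approx f'(Q_1)(R-Q_1) - \sum_{k\ge2}f(Q_k) = p|Q_1|^{p-1}\sum_{k\ge2}Q_k + O(\text{quadratic})$ near $z_1$. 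Then $\langle G,\partial_x Q_1\rangle \approx p\langle |Q_1|^{p-1}\partial_x Q_1, Q_2\rangle + (\text{farther bumps} + \text{quadratic errors})$. Now $p|Q_1|^{p-1}\partial_x Q_1 = \partial_x(f(Q_1)) = \sigma_1 f(Q)'(\cdot-z_1)$ in the appropriate signed sense, and integrating by parts (or using~\eqref{eq:Q}, i.e. $f(Q)=Q''+Q$ hence... ) one reduces $\langle p|Q_1|^{p-1}\partial_x Q_1, Q_2\rangle$ to $-\langle f(Q_1),\partial_x Q_2\rangle$ plus boundary-free terms, and then applies the asymptotics of part (ii) — differentiating the relation of~\eqref{tech4} in $z_2$, or redoing the computation with $Q'(x-d)=-c_Q e^{-(d-x)}+O(e^{-2(d-x)})$ — to get the leading term $-c_1\kappa\sigma_1\sigma_2 e^{-(z_2-z_1)}$. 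The contributions of bumps $z_3,\dots,z_K$ against $\partial_x Q_1$ are bounded by $e^{-(z_3-z_1)}\le e^{-(z_2-z_1)-(z_3-z_2)}$, which is $\le e^{-\theta(z_2-z_1)}$ for $z_3-z_2$ large; similarly all quadratic error terms from the Taylor expansion are $O(e^{-\theta(z_{l+1}-z_l)})$ by the same kind of split-at-midpoint estimate as in part (i), using $\theta<p-1$ and $\theta<\frac32$ to have the needed integrability and room. The cases \eqref{on:GK} and \eqref{on:G} are identical by symmetry, noting that for an interior bump $k$ both neighbors $z_{k-1}$ and $z_{k+1}$ contribute with opposite signs coming from $\partial_x Q_k$ picking out $Q_{k-1}$ (to its left, contributing $+e^{-(z_k-z_{k-1})}$) versus $Q_{k+1}$ (to its right, contributing $-e^{-(z_{k+1}-z_k)}$).

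\medskip

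\textbf{Main obstacle.} The bookkeeping in part (iii): one must organize the Taylor expansion of $G$ so that the genuine leading term survives with the right constant and \emph{sign}, while every remaining piece — the non-nearest-neighbor interactions, the quadratic remainders of $f$, and the error in replacing $Q$ by its exponential asymptotics — is shown to be $O\big(\sum_l e^{-\theta(z_{l+1}-z_l)}\big)$. The constant $c_1\kappa$ and the integration-by-parts identity turning $\langle |Q_1|^{p-1}\partial_x Q_1, Q_2\rangle$ into something governed by~\eqref{tech4} is where the precise value of $\kappa$ in~\eqref{def:kappa} gets pinned down, and the constraint $\theta<p-1$ is exactly what is needed to absorb the first nonlinear remainder $|Q_{k'}|^{p-1}|Q_k|$-type term from the expansion of $f(R)-\sum f(Q_k)$ beyond its linear part. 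The parts (i) and (ii) are routine once the midpoint-splitting lemma and the $p>2$ convergence of $\int Q^p e^{2x}$ are isolated.
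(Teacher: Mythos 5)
Your proposal follows essentially the same route as the paper: for (i) the paper simply defers the midpoint-splitting computations to \cite{CMYZ}; for (ii) the paper's proof of the key estimate~\eqref{est:Qp} likewise replaces $Q$ by its exponential asymptotics~\eqref{asym:Q} in a central region and estimates the tails separately; and for (iii) the same pointwise Taylor expansion of $G$ around each $Q_k$, together with the integration by parts $p|Q_k|^{p-1}\partial_x Q_k = \partial_x f(Q_k)$, reduces the leading interaction term to the asymptotics of part (ii), with $\theta<p-1$ absorbing the quadratic remainder and $\theta<\tfrac32$ absorbing the error from~\eqref{tech4}.

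Two small imprecisions are worth noting, though neither is a fatal gap. First, your ``cleaner route'' for (ii) applies the pointwise bound $|Q(d-x)-c_Q e^{-(d-x)}|\lesssim e^{-2(d-x)}$ as if it held for all $x$; but~\eqref{asym:Q} is an asymptotic as the argument tends to $+\infty$, and for $x$ near or beyond $d$ the quantity $e^{-2(d-x)}$ actually grows while $Q(d-x)$ stays bounded. One must still cut the domain (the paper cuts at $|y|<\tfrac34 z$ versus $|y|>\tfrac34 z$), and it is precisely the tail $|y|>\tfrac34 z$ that produces the rate $\tfrac32$ in~\eqref{tech4}, the main-term error being the better $O(e^{-2z})$. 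Second, the ``$2\cdot\tfrac58=\tfrac54$'' heuristic for~\eqref{tech3} does not quite explain the exponent: attempting to bound the quadratic remainder $|Q_k|^{p-1}|Q_{k'}|^2$ via~\eqref{tech1} with $m=p-1$ only yields a rate $m'<p-1$, which for $p$ close to $2$ is below $\tfrac54$. The correct route is a direct estimate on the cell of $z_k$, splitting at the midpoint with $z_{k'}$ as you do in (i), which in fact gives the stronger rate $\tfrac{p+1}{2}>\tfrac32$; the $\tfrac54$ of~\eqref{tech3} is then comfortably satisfied.
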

\begin{proof}
Proof of (i). These estimates are direct consequences of the 
decay properties of $Q$ in \eqref{asym:Q} and $p>2$. See details in \cite[proof of Lemma 2.1]{CMYZ}.

Proof of (ii). We claim the following estimate for $z\gg1$.
\begin{equation}\label{est:Qp}
\bigg|\int Q^{p}(y)Q(y+z)\ud y-c_{1}\kappa e^{-z}\bigg|\lesssim e^{-\frac{3}{2}z}.
\end{equation}
Observe that~\eqref{tech4} follows directly from~\eqref{est:Qp}. 

Now, we prove~\eqref{est:Qp}. First, for $|y|<\frac{3}{4}z$, using~\eqref{asym:Q}, we have
\begin{equation*}
\big|Q(y+z)-c_{Q}e^{-(y+z)}\big|\lesssim e^{-2(y+z)}\lesssim e^{-2z}e^{2|y|},
\end{equation*}
and so
\begin{equation*}
\bigg|\int _{|y|<\frac{3}{4}z}Q^{p}(y)\big[Q(y+z)-c_{Q}e^{-(y+z)}\big]\ud y \bigg|\lesssim e^{-2z}.
\end{equation*}
Second, using~\eqref{asym:Q} and $p>2$, it holds
\begin{equation*}
\begin{aligned}
\int _{|y|>\frac{3}{4}z}Q^{p}(y)Q(y+z)\ud y+\int _{|y|>\frac{3}{4}z}Q^{p}(y)e^{-(y+z)}\ud y
\lesssim e^{-\frac{3}{2}z}.
\end{aligned}
\end{equation*}
Gathering these estimates, we have proved~\eqref{est:Qp}.

Proof of (iii). We treat the case $k\in \{2,\ldots,K-1\}$ for $K\geq 3$. 
Other cases are similar.
On the one hand, using Taylor formula, it holds
\begin{equation*}
\bigg|G-p|Q_{k}|^{p-1} \sum_{k'\ne k}Q_{k'} \bigg|\lesssim |Q_{k}|^{p-2} \sum_{k'\ne k}|Q_{k'}|^{2}+
\sum_{k'\neq k, l\neq k'} |Q_{k'}|^{p-1}|Q_l| .
\end{equation*}
Thus, using~\eqref{tech1}, we have for any $1<\theta<\min(p-1,2)$,
\begin{align*}
\bigg|\langle G,\partial_{x}Q_{k}\rangle-\sum_{k'\ne k}\langle p|Q_{k}|^{p-1}Q_{k'},\partial_{x}Q_{k}\rangle\bigg| 
&\lesssim\sum_{l\neq k'}\int |Q_{k'}|^{p-1}|Q_{l}|^{2} \ud x
\\&\lesssim \sum_{l=1}^{K-1}e^{-\theta(z_{l+1}-z_{l})}.
\end{align*}
On the other hand, by direct computation, integrating by parts and using the proof of~\eqref{est:Qp}, we obtain, for any $1<\theta <\min (p-1,\frac{3}{2})$,
\begin{align*}
&\sum_{k'\ne k}\langle p|Q_{k}|^{p-1}Q_{k'},\partial_{x}Q_{k} \rangle\\
&=-\sum_{k'\ne k}\sigma_{k}\sigma_{k'}\int Q^{p}(y)\partial_{x}Q(y+z_{k}-z_{k'})\ud y \\
&=\sum_{k'<k}\sigma_{k'}\sigma_{k}c_{1}\kappa e^{-(z_{k}-z_{k'})}
-\sum_{k'>k}\sigma_{k}\sigma_{k'}c_{1}\kappa e^{-(z_{k'}-z_{k})}+O\bigg(\sum_{l=1}^{K-1}e^{-\frac 32(z_{l+1}-z_{l})}\bigg)\\
&=\sigma_{k-1}\sigma_{k}c_{1}\kappa e^{-(z_{k}-z_{k-1})}-\sigma_{k}\sigma_{k+1}c_{1}\kappa e^{-(z_{k+1}-z_{k})}+O\bigg(\sum_{l=1}^{K-1}e^{-\frac 32(z_{l+1}-z_{l})}\bigg).
\end{align*}
Indeed, for example, if $k\geq 3$, one has 
\begin{equation}\label{eq:exa}z_k-z_{k-2} =( z_k-z_{k-1}) + (z_{k-1}-z_{k-2})
\geq 2 \min(z_k-z_{k-1};z_{k-1}-z_{k-2}),\end{equation}
which allows us to consider interactions between to non neighbor solitary waves as error terms.
Gathering the above estimates, we find~\eqref{on:G}.
\end{proof}
\subsection{Decomposition close to the sum of solitary waves}

\begin{lemma}\label{le:dec}
Let $\vec u=(u,\partial_t u)$ be a solution of~\eqref{nlkg} on the interval $[T_1 ,T_2 ]$ and let $K\ge 1$.
Assume that 
\begin{equation} \label{for:dec}
\sup_{t\in [T_1 ,T_2 ]}\bigg\{ \inf_{ \xi_{k+1}-\xi_k>|\log \gamma|} \bigg\|u(t)-\sum_{k=1}^{K}\sigma_{k} Q(\cdot -{\xi}_{k})\bigg\|_{H^{1}}+\|\partial_{t} u(t)\|_{L^{2}}\bigg\}<\gamma,
\end{equation}
for some small $\gamma>0$.
Then, there exist unique $\mathcal C^1$ functions
\begin{equation*}
t\in [T_1 ,T_2 ]\mapsto \left( z_k (t) ,\ell_k (t)\right)_{k\in\{1,\ldots,K\}}\in
\mathbb R^{2K},
\end{equation*}
such that the solution $\vec{u}$ decomposes as
\begin{equation}\label{def:ee}
\vec u = \begin{pmatrix} u \\ \partial_t u\end{pmatrix}=
\sum_{k=1}^{K}\vec Q_k + \vve,\quad
\vve=\begin{pmatrix}\e \\ \eta \end{pmatrix}
\end{equation}
with the following properties on $[T_1 , T_2 ]$.

\begin{enumerate}
\item \emph{Orthogonality and smallness.} For any $k=1,\ldots, K$,
\begin{equation}\label{ortho}
\langle \e, \partial_{x}Q_{k}\rangle=\langle \eta,\partial_{x}Q_{k}\rangle=0
\end{equation}
and
\begin{equation*}
\|\vve\|_\ENE +\sum_{l=1}^{K} |\ell_l|+\sum_{l=1}^{K-1}e^{-2(z_{l+1}-z_{l})}\lesssim\gamma.
\end{equation*}

\item \emph{Equation of $\vve$.}
\begin{equation}\label{syst_e}\left\{\begin{aligned}
\partial_t \e & = \eta + \md _{{\mathbf \e}}\\
\partial_t \eta &
= \partial_x^2 \e-\e+f(R+\e)-f(R)
-2\alpha\eta + \md_{\eta}+G
\end{aligned}\right.\end{equation}
where
\begin{align*}
&\mathrm{Mod}_{\e}=
\sum_{k=1}^{K}\left(\dot z_{k}-
{\ell}_{k}\right)\partial_{x}Q_{k},\\
&\mathrm{Mod}_{\eta}=
\sum_{k=1}^{K}\big(\dot{\ell}_{k}+2\alpha{\ell}_{k}\big)\partial_{x}Q_{k}-\sum_{k=1}^{K} {\ell}_{k}\dot{z}_{k}\partial_{x}^{2}Q_{k}.
\end{align*}
\item \emph{Control of the geometric parameters.} For $k=1,\ldots,K$,
\begin{align}
|\dot{z}_{k}-{\ell}_{k}|&\lesssim \|\vve \|^{2}_\ENE+\sum_{l=1}^{K}|\ell_{l}|^{2},
\label{eq:z}\\
|\dot{{\ell}}_{k}+2\alpha{\ell}_{k}|
&\lesssim \|\vve\|_\ENE^{2}+\sum_{l=1}^{K}|\ell_{l}|^{2}+\sum_{l=1}^{K-1}e^{-(z_{l+1}-z_{l})} .\label{eq:l}
\end{align}
\item\emph{Control of the exponential directions.} 
For $k=1,\cdots, K$, let 
\begin{equation}\label{def:akpm}
a_k^{\pm} = \langle \vve,\vec Z_{k}^{\pm}\rangle.
\end{equation}
Then,
\begin{equation}\label{eq:a}
\left| \frac \ud{\ud t} a_k^{\pm}- \nu^\pm a_{k}^{\pm}\right|\lesssim \|\vve \|_\ENE^{2}+\sum_{l=1}^{K}|{\ell}_{l}|^{2}+ \sum_{l=1}^{K-1}e^{-(z_{l+1}-z_{l})}.
\end{equation}
\end{enumerate}
\end{lemma}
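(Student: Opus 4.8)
The plan is to construct the decomposition by a standard implicit function theorem / modulation argument, then derive the modulation equations and the equations for the exponential directions by direct differentiation.

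\textbf{Step 1: Construction of the decomposition.} For fixed parameters $(\xi_k)$ with $\xi_{k+1}-\xi_k$ large, consider the map sending $(z_k,\ell_k)_{k}$ and a function $v$ close to $\sum_k \sigma_k Q(\cdot - \xi_k)$ to the orthogonality conditions $\big(\langle v - \sum_k \vec Q_k, (\partial_x Q_j, 0)\rangle, \langle v - \sum_k \vec Q_k, (0,\partial_x Q_j)\rangle\big)_{j}$. At $(z_k,\ell_k) = (\xi_k, 0)$ and $v = \sum_k \sigma_k Q(\cdot-\xi_k)$ all these vanish, and the Jacobian with respect to $(z_k,\ell_k)$ is, up to exponentially small off-diagonal terms coming from \eqref{tech1}, block diagonal with diagonal blocks $\mathrm{diag}(-\|Q'\|_{L^2}^2, \|Q'\|_{L^2}^2) \neq 0$; hence it is invertible and the implicit function theorem yields unique $\mathcal C^1$ functions $(z_k(t),\ell_k(t))$ satisfying \eqref{ortho}, with the quantitative bounds of property (i) following from \eqref{for:dec} and the a priori smallness of $\xi_{k+1}-\xi_k$ being large (so $e^{-2(z_{l+1}-z_l)}\lesssim\gamma$). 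The $\mathcal C^1$ regularity in $t$ is inherited from that of $\vec u$.

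\textbf{Step 2: Equation for $\vve$ and modulation identities.} Substituting \eqref{def:ee} into \eqref{nlkg} and using that each $Q_k$ is static, so $\partial_t \vec Q_k = (-\dot z_k \partial_x Q_k, -\dot\ell_k \partial_x Q_k + \ell_k \dot z_k \partial_x^2 Q_k)$ after expanding, gives \eqref{syst_e} with $\md_\e,\md_\eta$ as stated (here $f(R+\e)-f(R)$ absorbs the full nonlinearity and $G$ from \eqref{def:G} collects the mismatch $f(R)-\sum_k f(Q_k)$). Differentiating the orthogonality relations \eqref{ortho} in $t$ and inserting \eqref{syst_e}, one obtains a linear system for $(\dot z_k - \ell_k)$ and $(\dot\ell_k + 2\alpha\ell_k)$ whose matrix is again an $\mathcal O(e^{-(z_{l+1}-z_l)})$ perturbation of the invertible block-diagonal matrix from Step 1. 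Solving and bounding the right-hand sides — using $\langle \e,\partial_x Q_k\rangle = 0$ to kill linear-in-$\e$ terms, the orthogonality of $\eta$, the quadratic estimate $|f(R+\e)-f(R)-f'(R)\e|\lesssim |\e|^2 + |\e|^p$, and \eqref{tech2} for the term $\langle G,\partial_x Q_k\rangle$ — yields \eqref{eq:z} and \eqref{eq:l}. (The term $\langle G, \partial_x Q_k\rangle$ is genuinely of size $e^{-(z_{l+1}-z_l)}$ by \eqref{on:G1}–\eqref{on:G}, which is why it appears only in \eqref{eq:l}.)

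\textbf{Step 3: Exponential directions.} Recall that $\vec Z^\pm$ are the eigenvectors of the linearized damped flow associated with eigenvalues $\nu^\pm$: writing the linearized operator around a single soliton in the $(\e,\eta)$ variables and using $\LL Y = -\nu_0^2 Y$ together with $(\nu^\pm)^2 + 2\alpha\nu^\pm = \nu_0^2$, one checks the adjoint-type relation making $t\mapsto \langle \vve, \vec Z_k^\pm\rangle$ evolve, to leading order, like $\dot a_k^\pm = \nu^\pm a_k^\pm$. Concretely, differentiate \eqref{def:akpm}, substitute \eqref{syst_e}, integrate by parts to move $\partial_x^2$ onto $\vec Z_k^\pm$, use the spectral identity for $\LL$ and $Y$, and treat as errors: the modulation terms $\md_\e,\md_\eta$ (bounded by the right-hand sides of \eqref{eq:z}–\eqref{eq:l}, hence by $\|\vve\|_\ENE^2 + \sum|\ell_l|^2 + \sum e^{-(z_{l+1}-z_l)}$), the nonlinear remainder $f(R+\e)-f(R)-pR^{p-1}\e$ and the difference $pR^{p-1}\e - pQ_k^{p-1}\e$ localized near $z_k$ (both quadratic or exponentially small, using exponential decay of $Y$), the moving-frame terms $\dot z_k \langle \vve, \partial_x \vec Z_k^\pm\rangle$ (quadratic since $\dot z_k = \ell_k + \mathcal O(\cdots)$ and $\ell_k$ is small — more care here puts $\ell_k\langle\vve,\partial_x\vec Z_k^\pm\rangle$ among the controlled terms), and $\langle G, \vec Z_k^\pm\rangle$ bounded by \eqref{tech2}. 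This gives \eqref{eq:a}.

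\textbf{Main obstacle.} The delicate point is the bookkeeping in Steps 2 and 3: one must verify that every term produced by differentiating the orthogonality or scalar-product conditions is either genuinely quadratic in $(\vve,\ell)$ or exponentially small in the inter-soliton distances, with no stray linear-in-$\e$ term surviving. The orthogonality \eqref{ortho} is exactly designed to cancel the dangerous linear terms in the $(z_k,\ell_k)$ equations, but for the $a_k^\pm$ equations one must instead exploit the precise spectral structure \eqref{eq:Y}–\eqref{eq:Z} of $\vec Z_k^\pm$ relative to the linearized operator to see that the linear part reduces exactly to $\nu^\pm a_k^\pm$; getting the algebra of $\nu^\pm$, $\zeta^\pm$, $\nu_0$ and $\alpha$ to line up is where the argument is least routine. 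The localization needed to replace $R^{p-1}$ by $Q_k^{p-1}$ near the $k$-th soliton, justified by the exponential decay of $Y$ and estimates like \eqref{eq:exa}, is the other place requiring genuine care.
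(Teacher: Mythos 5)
Your proposal is correct and follows the standard modulation argument (implicit function theorem for the decomposition, differentiation of the orthogonality relations for \eqref{eq:z}--\eqref{eq:l}, and differentiation of $a_k^{\pm}=\langle\vve,\vec Z_k^{\pm}\rangle$ combined with the spectral identities $\zeta^{\pm}-2\alpha=\nu^{\pm}$ and $\nu^{\pm}\zeta^{\pm}=\nu_0^2$ for \eqref{eq:a}); this is exactly the route taken in the reference the paper cites for this lemma, namely \cite[Lemma 2.2]{CMYZ}. The only blemish is cosmetic: the diagonal entry you write as $-\|Q'\|_{L^2}^2$ in the Jacobian is in fact $+\|Q'\|_{L^2}^2$ with the sign conventions of the paper, but this has no bearing on invertibility.
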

\begin{remark}\label{rk:KK01}
In the above estimates, if $K=1$ then the terms $\sum_{l=1}^{K-1}$ are $0$.
\end{remark}
\begin{proof}
See \cite[proof of Lemma 2.2]{CMYZ}.
\end{proof}
\begin{lemma} [Refined equation for $\ell_{k}$] 
In the context of Lemma~\ref{le:dec}, assume that $K\geq 2$. 
Let $1<\theta<\min(p-1,\frac{3}{2})$. Then,
\begin{multline}\label{eq:lbis1}
 \Big| \dot{{\ell}}_{1}+2\alpha{\ell}_{1}-\kappa\sigma_{1}\sigma_{2}e^{-(z_{2}-z_{1})}\Big|
 +
 \Big| \dot{{\ell}}_{K}+2\alpha{\ell}_{K}+\kappa \sigma_{K-1}\sigma_{K}e^{-(z_{K}-z_{K-1})}\Big|\\
 \lesssim \|\vve\|_\ENE^{2}+\sum_{l=1}^{K}|{\ell}_{l}|^{2}+\sum_{l=1}^{K-1}e^{-\theta (z_{l+1}-z_{l})}.
\end{multline}
Moreover, if $K\geq 3$ then for any $k\in \{2,\ldots,K-1\}$,
\begin{multline}\label{eq:lbis}
 \Big| \dot{{\ell}}_{k}+2\alpha{\ell}_{k}+\kappa \sigma_{k} \left[\sigma_{k-1}e^{-(z_{k}-z_{k-1})}-\sigma_{k+1}e^{-(z_{k+1}-z_{k})}\right]\Big|\\
 \lesssim \|\vve\|_\ENE^{2}+\sum_{l=1}^{K}|{\ell}_{l}|^{2}+\sum_{l=1}^{K-1}e^{-\theta (z_{l+1}-z_{l})}.
\end{multline}
\end{lemma}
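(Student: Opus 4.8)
The plan is to upgrade the crude bound~\eqref{eq:l} on $\dot\ell_k+2\alpha\ell_k$, whose error term is of size $\sum_{l}e^{-(z_{l+1}-z_l)}$, into the refined identities by computing explicitly the leading contribution of the interaction term $G$. Starting from the equation for $\eta$ in~\eqref{syst_e}, I would differentiate the orthogonality relation $\langle\eta,\partial_x Q_k\rangle=0$ (equivalently the one already used to derive~\eqref{eq:l}) and isolate the term $\langle G,\partial_x Q_k\rangle$, which in the proof of~\eqref{eq:l} was merely bounded by $\sum_l e^{-(z_{l+1}-z_l)}$. The point is that parts (iii) of the interaction Lemma — estimates~\eqref{on:G1}, \eqref{on:GK}, \eqref{on:G} — give this quantity up to an error $\sum_l e^{-\theta(z_{l+1}-z_l)}$ with $1<\theta<\min(p-1,\tfrac32)$, namely $\langle G,\partial_x Q_1\rangle=-c_1\kappa\sigma_1\sigma_2 e^{-(z_2-z_1)}+O(\sum e^{-\theta(z_{l+1}-z_l)})$ and similarly for $k=K$ and for interior indices. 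After dividing by the normalization $\|\partial_x Q\|_{L^2}^2=c_1$ coming from the projection onto $\partial_x Q_k$, the factor $c_1$ cancels and one is left exactly with the coefficient $\kappa$ appearing in~\eqref{eq:lbis1}–\eqref{eq:lbis}.

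Concretely, the first step is to recall from the proof of Lemma~\ref{le:dec} the exact linear algebra that produced~\eqref{eq:l}: testing the $\eta$-equation against $\partial_x Q_k$, one gets (after controlling the modulation terms $\md_\eta$, which are accounted for by the $\|\vve\|^2$ and $|\ell_l|^2$ terms, and after integrating by parts the $\partial_x^2\e-\e+f(R+\e)-f(R)$ part, which is quadratic in $\vve$ and hence absorbed) an identity of the schematic form $c_1(\dot\ell_k+2\alpha\ell_k)=-\langle G,\partial_x Q_k\rangle+O(\|\vve\|_\ENE^2+\sum|\ell_l|^2)$. The second step is simply to substitute the asymptotics of $\langle G,\partial_x Q_k\rangle$ from part (iii) of the interaction Lemma into this identity, keeping the main term and merging the $O(\sum e^{-\theta(z_{l+1}-z_l)})$ error with the pre-existing errors. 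For $k=1$ this yields $c_1(\dot\ell_1+2\alpha\ell_1)=c_1\kappa\sigma_1\sigma_2 e^{-(z_2-z_1)}+O(\dots)$, i.e.\ the first summand of~\eqref{eq:lbis1}; the case $k=K$ produces the opposite sign via~\eqref{on:GK}, and the interior case~\eqref{on:G} gives the bracketed difference in~\eqref{eq:lbis}.

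I do not expect a genuine obstacle here — the statement is essentially a combination of two results already in hand (the derivation of~\eqref{eq:l} and the leading-order interaction estimates), and all the analytic work (coercivity, control of $\md_\eta$, the Taylor expansion of the nonlinearity) has been done. The one point requiring a little care is bookkeeping of the error exponents: the estimate~\eqref{eq:l} carries $e^{-(z_{l+1}-z_l)}$ errors coming from $G$, and these must now be shown to be, in fact, of the refined size $e^{-\theta(z_{l+1}-z_l)}$ once the explicit $e^{-(z_{k}-z_{k\pm1})}$ terms are extracted; this is exactly the content of~\eqref{on:G1}–\eqref{on:G}, so the restriction $1<\theta<\min(p-1,\tfrac32)$ is inherited directly. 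One should also double-check that the modulation quantities $|\dot z_k-\ell_k|$ entering $\md_\eta$ do not reintroduce a first-order-in-$e^{-(z_{l+1}-z_l)}$ term: by~\eqref{eq:z} they are quadratic in $(\vve,\ell)$, hence harmless, and the products $\ell_k\dot z_k$ in $\md_\eta$ are likewise higher order. With these verifications the estimates~\eqref{eq:lbis1} and~\eqref{eq:lbis} follow immediately.
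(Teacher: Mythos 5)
Your approach is essentially the paper's: differentiate the orthogonality $\langle\eta,\partial_{x}Q_{k}\rangle=0$, test the $\eta$-equation against $\partial_{x}Q_{k}$, isolate the diagonal modulation term $c_{1}(\dot\ell_{k}+2\alpha\ell_{k})$, and substitute the asymptotics \eqref{on:G1}--\eqref{on:G} for $\langle G,\partial_{x}Q_{k}\rangle$. The signs and the cancellation of the $c_{1}$ factor are handled correctly.

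There is however one bookkeeping point that your sketch glosses over and that makes the schematic identity
$c_{1}(\dot\ell_{k}+2\alpha\ell_{k})=-\langle G,\partial_{x}Q_{k}\rangle+O(\|\vve\|_{\ENE}^{2}+\sum_{l}|\ell_{l}|^{2})$
not quite correct as written. The pairing $\langle \md_{\eta},\partial_{x}Q_{k}\rangle$ is not merely the diagonal term: because $\md_{\eta}$ sums over all indices, you also pick up cross terms
\[
\sum_{k'\neq k}\big(\dot\ell_{k'}+2\alpha\ell_{k'}\big)\langle\partial_{x}Q_{k'},\partial_{x}Q_{k}\rangle,
\]
which are of size $\sum_{k'\neq k}|\dot\ell_{k'}+2\alpha\ell_{k'}|\,e^{-\frac34|z_{k'}-z_{k}|}$, and these are \emph{not} directly $O(\|\vve\|_{\ENE}^{2}+\sum_{l}|\ell_{l}|^{2})$. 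One first arrives at the intermediate inequality \eqref{eq:finell}, which still carries this cross-index term, and the final estimates \eqref{eq:lbis1}--\eqref{eq:lbis} are obtained only after combining \eqref{eq:finell} over all $k\in\{1,\ldots,K\}$ (or, equivalently, after invoking the crude bound \eqref{eq:l} to absorb the product into $\sum_{l}e^{-\theta(z_{l+1}-z_{l})}$, using $1+\frac34>\theta$). You flag the right places to be careful about modulation (the $|\dot z_{k}-\ell_{k}|$ and $\ell_{k}\dot z_{k}$ contributions) but miss this particular cross coupling; once it is accounted for, the argument is exactly the paper's.
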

\begin{proof}
We treat the case $k\in \{2,\ldots,K-1\}$ for $K\geq 3$. 
Other cases are similar.
First,
\begin{equation*}
\frac{\ud}{\ud t}\langle \eta,\partial_{x}Q_{k} \rangle= \langle \partial_t\eta, \partial_{x}Q_{k}\rangle+\langle \eta,\partial_{t}\partial_{x}Q_{k}\rangle=0.
\end{equation*}
Thus, using~\eqref{ortho} and~\eqref{syst_e},
\begin{align*}
0=
&\langle \partial_{x}^{2}\e-\e+f'(Q_{k})\e,\partial_{x}Q_{k}\rangle +\langle f(R+\e)-f(R)-f'(R)\e, \partial_{x}Q_{k}\rangle\\
&+\langle \big(f'(R)-f'(Q_{k})\big)\e, \partial_{x}Q_{k}\rangle +\langle G,\partial_{x}Q_{k}\rangle +\langle {\rm{Mod}}_{\eta}, \partial_{x}Q_{k}\rangle-\langle \eta, \dot{z}_{k}\partial_{x}^{2}Q_{k}\rangle.
\end{align*}
Since $\partial_{x}Q_k$ satisfies $\partial^{2}_{x}\partial_{x}Q_k-\partial_{x}Q_k+f'(Q_k )\partial_{x}Q_k=0$, by integration by parts, the first term is zero. Next, by Taylor expansion (using $p>2$), we have
the pointwise estimate
\begin{equation*}
\big|f(R+\e)-f(R)-f'(R)\e\big|\lesssim |\e|^{p}+ |\e|^{2} \sum_{l=1}^{K} |Q_{l}|^{p-2} 
\end{equation*}
and
\begin{equation*}
\big|f'(R)-f'(Q_{k})\big|\lesssim |Q_{k} |^{p-2} \sum_{k'\ne k} |Q_{k'} | +\sum_{k'\ne k}\big|Q_{k'}\big|^{p-1}.
\end{equation*}
Thus, using $\|\cdot\|_{L^\infty}\lesssim \|\cdot \|_{H^1}$, 
\begin{align*}
\left|\langle f(R+\e)-f(R)-f'(R)\e,\partial_{x} Q_k \rangle\right|\lesssim \|\e\|_{H^1}^{2}
\end{align*}
and by the Cauchy-Schwarz inequality and \eqref{tech2},
\begin{equation*}
\left|\langle (f'(R)-f'(Q_{k}) )\e, \partial_{x}Q_{k}\rangle\right|\lesssim \|\e\|_{H^1}^{2}
+\sum_{k=1}^{K-1}e^{-2(z_{k+1}-z_{k})}\ .
\end{equation*}
By direct computation, we obtain
\begin{align*}
\langle {\rm{Mod}}_{\eta}, \partial_{x}Q_{k}\rangle
&=\big(\dot{\ell}_{k}+2\alpha\ell_{k}\big)\|Q'\|_{L^{2}}^{2}
+\sum_{k'\ne k}\big(\dot{\ell}_{k'}+2\alpha\ell_{k'}\big)\langle \partial_{x}Q_{k'},\partial_{x}Q_{k}\rangle\\
&\quad -\sum_{k'\ne k}\big(\ell_{k'}\dot{z}_{k'}\big)\langle \partial^{2}_{x}Q_{k'},\partial_{x}Q_{k}\rangle.
\end{align*}
Thus, using the equation of $Q$,~\eqref{tech1}, \eqref{def:kappa} and~\eqref{eq:z}, we obtain
\begin{align*}
\langle {\rm{Mod}}_{\eta}, \partial_{x}Q_{k}\rangle
&=c_1\big(\dot{\ell}_{k}+2\alpha\ell_{k}\big)
+O\bigg(\sum_{k'\ne k}|\dot{\ell}_{k'}+2\alpha\ell_{k'}|e^{-\frac{3}{4}|z_{k'}-z_{k}|}\bigg)\\
&\quad +O\bigg(\|\vec{\varepsilon}\,\|^{2}_{\ENE}+\sum_{l=1}^{K}|\ell_{l}|^{2}\bigg).
\end{align*}
Note that, by~\eqref{eq:z} and the Cauchy-Schwarz inequality, we obtain
\begin{equation*}
\big|\langle \eta, \dot{z}_{k}\partial_{x}^{2}Q_{k}\rangle\big|\lesssim \big(|\dot{z}_{k}-\ell_{k}|+|\ell_{k}|\big)\|\vec{\varepsilon}\,\|_{\ENE}\lesssim \|\vec{\varepsilon}\,\|_{\ENE}^{2}+\sum_{k=1}^{K}|\ell_{k}|^{2}.
\end{equation*}
Gathering above estimates and using~\eqref{on:G1}-\eqref{on:GK}-\eqref{on:G}, we obtain
\begin{equation}\label{eq:finell}
\begin{aligned}
&\Big| \dot{{\ell}}_{k}+2\alpha{\ell}_{k}+\kappa\sigma_{k}\left[ \sigma_{k-1}e^{-(z_{k}-z_{k-1})}- \sigma_{k+1}e^{-(z_{k+1}-z_{k})}\right]\Big|\\
&\lesssim \|\vve\|_\ENE^{2}+\sum_{l=1}^{K}|{\ell}_{l}|^{2}+\sum_{k'\ne k}|\dot{\ell}_{k'}+2\alpha\ell_{k'}|e^{-\frac{3}{4}|z_{k'}-z_{k}|}+\sum_{l=1}^{K-1}e^{-\theta (z_{l+1}-z_{l})}.
\end{aligned}
\end{equation}
We obtain~\eqref{eq:lbis} by combining~\eqref{eq:finell} for all $k\in \{1,\ldots,K\}$.
\end{proof}
\subsection{Energy estimates}\label{S.3.3}
For $\mu>0$ small to be chosen, we denote $\rho=2\alpha-\mu$. Consider the nonlinear energy functional
\begin{equation*} 
\mathcal{E} =
\int \big\{ (\partial_x\e)^2+ (1-\rho\mu )\e^{2}
+(\eta+\mu\e)^2- 2 [F (R +\e )-F (R )-f (R )\e]\big\}.
\end{equation*}
We recall the following energy estimates.
\begin{lemma}\label{le:ener} There exists $\mu>0$ such that
in the context of Lemma~\ref{le:dec},
 the following hold.
\begin{enumerate}
\item\emph{Coercivity and bound.}
\begin{equation}\label{eq:coer}
\mu \|\vve \|_\ENE^{2}-\frac{1}{2\mu}\sum_{k=1}^{K}\left( (a_{k}^{+})^{2}+(a_{k}^{-})^{2}\right)
\leq \mathcal{E}\leq \frac 1\mu \|\vve \|_\ENE^{2}.
\end{equation}
\item\emph{Time variation.}
\begin{equation}\label{eq:E}
\frac \ud{\ud t}\mathcal{E}\le -2\mu\mathcal{E}
+\frac{1}{\mu}\|\vve \|_\ENE \bigg[\|\vve \|_\ENE^{2}+\sum_{k=1}^{K}|\ell_{k}|^{2}+\sum_{k=1}^{K-1}e^{-(z_{k+1}-z_{k})}\bigg].
\end{equation}
\end{enumerate}
\end{lemma}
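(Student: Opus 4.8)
The statement packages two essentially independent facts: a coercivity-up-to-unstable-directions estimate for the quadratic-plus-remainder functional $\mathcal E$, and a differential inequality governing its time evolution. I would prove them in that order, since the coercivity bound is static and the time-variation estimate is where all the dynamics (and the damping) enter.

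\emph{Step 1: coercivity and two-sided bound \eqref{eq:coer}.} The upper bound is immediate: expand $F(R+\e)-F(R)-f(R)\e$ by Taylor's formula to obtain the pointwise bound $\lesssim \e^2\sum_k Q_k^{p-1}+|\e|^{p+1}$, integrate, and use $\|\cdot\|_{L^\infty}\lesssim\|\cdot\|_{H^1}$ together with the smallness $\|\vve\|_\ENE\lesssim\gamma$ from Lemma~\ref{le:dec}(i); all cross terms are then absorbed, giving $\mathcal E\le\mu^{-1}\|\vve\|_\ENE^2$ for $\mu$ small. For the lower bound, complete the square in $\eta+\mu\e$ to isolate the $\|\eta\|_{L^2}^2$ contribution, and write the remaining $\e$-part as $\langle\LL\e,\e\rangle$ plus controlled errors: the localized potential $pQ^{p-1}$ of $\LL$ around each $z_k$ matches $pR^{p-1}$ up to interaction terms bounded using \eqref{tech1}, and the lower-order constants $-\rho\mu\e^2$, $2\mu\langle\e,\eta\rangle$ are $O(\mu)$. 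Then apply the coercivity of $\LL$ from Lemma~\ref{le:L}(ii), localized near each soliton via cutoff functions as in \cite{CMkg,CMYZ}, to get $\langle\LL\e,\e\rangle\gtrsim\|\e\|_{H^1}^2-C\sum_k(\langle\e,Y_k\rangle^2+\langle\e,\partial_xQ_k\rangle^2)$. The $\partial_xQ_k$ directions vanish by the orthogonality \eqref{ortho}; the $Y_k$ directions are recovered from $a_k^\pm=\langle\vve,\vec Z_k^\pm\rangle$, since the $\vec Z_k^\pm$ span the same two-dimensional space and $\langle\e,Y_k\rangle$, $\langle\eta,Y_k\rangle$ are linear combinations of $a_k^+,a_k^-$ (with coefficients depending only on $p,\alpha$ through $\zeta^\pm$). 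Choosing $\mu$ small enough to beat the $O(\mu)$ errors and the interaction terms $\sum e^{-2(z_{k+1}-z_k)}\lesssim\gamma$ yields \eqref{eq:coer}.

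\emph{Step 2: time variation \eqref{eq:E}.} Differentiate $\mathcal E$ in $t$, substituting the system \eqref{syst_e} for $\partial_t\e,\partial_t\eta$. The principal computation is the standard one: the terms that are quadratic in $\vve$ with no small prefactor should cancel or organize into $-2\mu\mathcal E$ up to a good sign, using that $\rho=2\alpha-\mu$ is chosen precisely so that the damping term $-2\alpha\eta$ produces, after completing the square, a dissipation $\sim-2\mu(\|\eta+\mu\e\|_{L^2}^2+\dots)$ matching $-2\mu\mathcal E$. The remaining contributions are all error terms to be bounded by $\mu^{-1}\|\vve\|_\ENE$ times the bracket: (a) the modulation terms $\md_\e,\md_\eta$ are controlled by $|\dot z_k-\ell_k|+|\dot\ell_k+2\alpha\ell_k|$ paired against $\e$ or its derivatives, hence by \eqref{eq:z}--\eqref{eq:l} they contribute $\|\vve\|_\ENE(\|\vve\|_\ENE^2+\sum|\ell_l|^2+\sum e^{-(z_{l+1}-z_l)})$; (b) the time-dependence of $R$ through $\dot z_k$ hitting $F(R+\e)-F(R)-f(R)\e$ gives, after Taylor expansion, terms of size $|\dot z_k|\,\|\e\|_{H^1}^2$, again absorbed via \eqref{eq:z}; (c) the source term $G$ paired with $\eta+\mu\e$ contributes $\|G\|_{L^2}\|\vve\|_\ENE\lesssim\|\vve\|_\ENE\sum e^{-(z_{l+1}-z_l)}$ by \eqref{tech2}; (d) the genuinely cubic terms from $f(R+\e)-f(R)-f'(R)\e$ give $\|\vve\|_\ENE^3$. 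Collecting, \eqref{eq:E} follows for $\mu$ fixed small.

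\emph{Main obstacle.} The delicate point is Step 2's algebra: one must verify that after substituting \eqref{syst_e}, the leading quadratic form in $(\e,\eta)$ — which mixes the wave operator $\partial_x^2\e-\e+f'(R)\e$, the damping, and the shift by $\mu\e$ in the kinetic variable — is bounded above by $-2\mu\mathcal E$ plus the listed small errors, with \emph{no} leftover quadratic term of indefinite sign. This is exactly where the choice $\rho=2\alpha-\mu$ and the particular form of $\mathcal E$ matter, and where one must be careful that the negative eigenvalue of $\LL$ does not spoil the dissipativity — it is precisely allowed to, which is why $\mathcal E$ is only coercive \emph{modulo} $\sum_k((a_k^+)^2+(a_k^-)^2)$ in \eqref{eq:coer} and the unstable modes are handled separately later. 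Since this computation is carried out in detail in the analogous setting of \cite[Section 2]{CMYZ}, I would organize the proof to reduce to that reference after accounting for the only structural difference, namely the presence of $K$ solitons rather than two and the resulting sum of interaction terms, which is harmless because distinct pairs of neighbors contribute independently (cf.\ \eqref{eq:exa}) and non-neighbor interactions are quadratically smaller.
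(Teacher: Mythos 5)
The paper itself gives no proof of Lemma~\ref{le:ener}, referring the reader to \cite[proof of Lemma 2.4]{CMYZ}; your sketch reconstructs exactly the argument that reference carries out, and it is correct. Both steps are right: the upper bound and the coercivity-modulo-$\{Y_k,\partial_x Q_k\}$ argument for \eqref{eq:coer} (with the $\partial_x Q_k$ directions killed by \eqref{ortho} and $\langle\e,Y_k\rangle$ recovered from $a_k^+-a_k^-=(\zeta^+-\zeta^-)\langle\e,Y_k\rangle$), and the differential inequality \eqref{eq:E} obtained by substituting \eqref{syst_e}, pairing against $\eta+\mu\e$ so that the $\rho\eta=(2\alpha-\mu)\eta$ term produces $-2\rho\|\eta+\mu\e\|_{L^2}^2$, comparing to $-2\mu\mathcal E$, and using \eqref{eq:z}, \eqref{eq:l}, \eqref{tech2} for the modulation, $\partial_t R$, and $G$ contributions.

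One small conceptual correction to your closing paragraph: the negative eigenvalue of $\LL$ plays no role at all in the validity of \eqref{eq:E}. That inequality is an algebraic consequence of \eqref{syst_e} and the choice $\rho=2\alpha-\mu$; it holds whether or not $\mathcal E$ is positive. The negative eigenvalue only threatens the lower bound \eqref{eq:coer} — that is, whether $-2\mu\mathcal E$ being negative actually forces decay of $\|\vve\|_\ENE$ — and it is exactly there, not in the time-variation, that the subtraction $-\frac{1}{2\mu}\sum_k((a_k^+)^2+(a_k^-)^2)$ is needed and the unstable modes must be tracked separately through \eqref{eq:a}.
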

\begin{remark}
The above lemma is valid for any $\mu>0$ small enough. For future needs, we fix such a $\mu>0$ satisyfing the following additional smallness condition
\begin{equation}\label{on:mu}
\mu \leq \min\left(1,\alpha, |\nu_-|\right).
\end{equation}
\end{remark}
\begin{remark}\label{rk:K01}
In the case where $K=1$, the term $\sum_{k=1}^{K-1}$ due to the nonlinear interactions does not appear in~\eqref{eq:E}.
\end{remark}
\begin{proof}
See \cite[proof of Lemma 2.4]{CMYZ}.
\end{proof}

\subsection{Time evolution analysis}\label{S:2.4}
We introduce new parameters and functionals to analyse the time evolution of solutions in the framework of Lemma~\ref{le:dec}.
First, we set, for $k=1,\ldots,K$,
\begin{equation*}
y_{k}=z_{k}+\frac{\ell_{k}}{2\alpha},
\end{equation*}
and for $k=1,\ldots,K-1$ (when $K\geq 2$),
\begin{equation*}
r_{k}=y_{k+1}-y_{k} \gg 1.
\end{equation*}
Second, we define
\begin{gather*}
\mathcal{K}_{+}=\{k=1,\ldots,K-1:\sigma_{k}=\sigma_{k+1}\},\quad F_{+}=\sum_{k\in \mathcal{K}_{+}}e^{-r_{k}},
\\
\mathcal{K}_{-}=\{k=1,\ldots,K-1:\sigma_{k}=-\sigma_{k+1}\},\quad F_{-}=\sum_{k\in \mathcal{K}_{-}}e^{-r_{k}}.
\end{gather*}
Third, we introduce notation for the damped components of the solution
\begin{equation*}
\mathcal F = \mathcal E + \BB,\quad \BB= \sum_{k=1}^{K} |\ell_k|^2 + \frac 1{2 \mu} \sum_{k=1}^{K} (a_k^-)^2 ,
\end{equation*}
and for all the components except distances
\begin{equation*}
\mathcal N = \bigg[ \|\vve\|_\ENE^2 + \sum_{k=1}^{K} |\ell_k|^2 \bigg]^{\frac 12}.
\end{equation*}
Last, we define
\begin{equation*}
b=\sum_{k=1}^{K} (a_k^+)^2,\quad \mathcal M= \frac 1{\mu^2}\left(\mathcal F - \frac{b}{2\nu^+}\right).
\end{equation*}
We rewrite the estimates of Lemmas~\ref{le:dec} and~\ref{le:ener} using such notation.

\begin{lemma}\label{le:new} Assume $K\geq 2$.
Let any $1<\theta<\min (p-1,\frac{5}{4})$.
In the context of Lemma~\ref{le:dec}, the following hold.
\begin{enumerate}
\item \emph{Comparison with original variables.} For $k=1,\ldots,K-1$,
\begin{gather}
 \big| r_{k}-(z_{k+1}-z_{k}) \big| \lesssim \mathcal N, \label{eq:new1} \\
 \mu \mathcal N^2\leq 
 \mu \|\vve\|_\ENE^2 + \sum_{l=1}^{K} |\ell_l|^2 \leq \mathcal F + \frac b{2\mu}
\lesssim\mathcal N^2.\label{eq:new2}
\end{gather}
\item\emph{ODE for the distances between solitary waves.} 
The equation for the evolution of $y_k$ is
\begin{equation} \label{eq:y_k}
\left\{ \begin{aligned}
&\dot{y}_{1}=\frac{\kappa}{2\alpha}\sigma_{1}\sigma_{2}e^{-r_{1}}+O\big(\mathcal{N}^{2}+F^{\theta}_{+}+F_{-}^{\theta}\big),\\
&\dot{y}_{k}=-\frac{\kappa}{2\alpha}\sigma_{k-1}\sigma_{k}e^{-r_{k-1}}+\frac{\kappa}{2\alpha}\sigma_{k}\sigma_{k+1}e^{-r_{k}}+O\big(\mathcal{N}^{2}+F^{\theta}_{+}+F_{-}^{\theta}\big),\\
&\dot{y}_{K}=-\frac{\kappa}{2\alpha}\sigma_{K-1}\sigma_{K}e^{-r_{K-1}}+O\big(\mathcal{N}^{2}+F^{\theta}_{+}+F_{-}^{\theta}\big),\end{aligned}
\right.
\end{equation}
for $2\le k\le K-1$.
Moreover, there exists $\lambda>0$ such that
\begin{align}
\frac{\ud}{\ud t}\left[\frac{1}{F_{+}}\right]&\le -\lambda+\frac{1}{\lambda F_{+}}\left(\mathcal{N}^{2}+F^{\theta}_{+}+F^{\theta}_{-}\right),\label{eq:dist+}\\
\frac{\ud}{\ud t}\left[\frac{1}{F_{-}}\right]&\ge \lambda-\frac{1}{\lambda F_{-}}\left(\mathcal{N}^{2}+F^{\theta}_{+}+F^{\theta}_{-}\right).\label{eq:dist-}
\end{align}
\item\emph{Exponential instability.}
\begin{equation}\label{eq:b}
 | \dot b - 2 \nu^+ b |\lesssim \mathcal N^3+ \mathcal N (F_{+}+F_{-}).
\end{equation}
\item\emph{Damped components.}
\begin{equation}\label{eq:damped}\begin{aligned}
&\frac \ud{\ud t}\mathcal F + 2 \mu \mathcal F \lesssim \mathcal N^3+ \mathcal N (F_{+}+F_{-}),
\\&\frac \ud{\ud t}\BB + 2 \mu \BB \lesssim \mathcal N^3+ \mathcal N (F_{+}+F_{-}).
\end{aligned}\end{equation}
\item\emph{Liapunov type functional.} 
\begin{equation} \label{eq:Mbis}
\frac \ud{\ud t} \mathcal M \leq -\mathcal N^2 + C \big(F^{2}_{+}+F^{2}_{-}\big).
\end{equation}
\item\emph{Refined estimates for the distance.}
Setting
\begin{equation*}
R_+=\frac1{F_{+}} \exp\left( \lambda^{-1}\mathcal M \right) \quad \text{and}\quad
R_-=\frac1{F_{-}} \exp\left(- 3\lambda^{-1}\mathcal M \right),
\end{equation*}
where $\lambda$ is given in $({\rm ii})$, it holds
\begin{align}
\frac{\ud}{\ud t} R_+ &\leq \left(-\lambda+\frac{2}{\lambda F_{+}}\left(F_{+}^{\theta}+F_{-}^{\theta}\right)\right)\exp\left( \lambda^{-1}\mathcal M \right),\label{eq:RF+}\\
\frac{\ud}{\ud t} R_- &\geq \left(\lambda+\frac{2}{\lambda F_{-}}\mathcal{N}^{2}-\frac{2}{\lambda F_{-}}\left(F_{+}^{\theta}+F_{-}^{\theta}\right)\right)\exp\left(- 3\lambda^{-1}\mathcal M \right)\label{eq:RF-}.
\end{align} 
\end{enumerate}
\end{lemma}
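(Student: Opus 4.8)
The plan is to read off all six items as reformulations, in the variables $y_k,r_k,F_\pm,\mathcal N,\mathcal F,\BB,b,\mathcal M$, of the estimates already established in Lemmas~\ref{le:dec}, \ref{le:ener} and the refined equations \eqref{eq:lbis1}--\eqref{eq:lbis}, proving them in the stated order since each uses the previous ones. Item (i) is pure bookkeeping: from $y_k=z_k+\ell_k/(2\alpha)$ one gets $r_k-(z_{k+1}-z_k)=(\ell_{k+1}-\ell_k)/(2\alpha)$ and $\sum_l|\ell_l|\lesssim\mathcal N$, which gives \eqref{eq:new1}; for \eqref{eq:new2} one adds $\sum_k|\ell_k|^2+\tfrac1{2\mu}\sum_k\big((a_k^-)^2+(a_k^+)^2\big)$ to both sides of the coercivity/boundedness estimate \eqref{eq:coer}, using $|a_k^\pm|\lesssim\|\vve\|_\ENE$ (immediate from \eqref{def:akpm}) and $\mu\le1$.

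The core is item (ii). The system \eqref{eq:y_k} is obtained by differentiating $y_k=z_k+\ell_k/(2\alpha)$ and inserting \eqref{eq:z} to replace $\dot z_k$ by $\ell_k$ modulo $O(\mathcal N^2)$ and \eqref{eq:lbis1}--\eqref{eq:lbis} to replace $\dot\ell_k$ by $-2\alpha\ell_k$ plus the leading interaction modulo $O\big(\mathcal N^2+\sum_le^{-\theta(z_{l+1}-z_l)}\big)$; the $\ell_k$ contributions cancel precisely because of the definition of $y_k$, and one then substitutes $e^{-(z_{k+1}-z_k)}=e^{-r_k}(1+O(\mathcal N))$ and $\sum_le^{-\theta(z_{l+1}-z_l)}\lesssim F_+^\theta+F_-^\theta$ from \eqref{eq:new1}, absorbing the leftover $\mathcal Ne^{-r_l}\lesssim\mathcal N^2+F_\pm^2\lesssim\mathcal N^2+F_\pm^\theta$ (valid since $1<\theta<\tfrac54<2$). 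For the monotonicity estimates \eqref{eq:dist+}--\eqref{eq:dist-} I would write $\tfrac{\ud}{\ud t}[1/F_\pm]=\tfrac1{F_\pm}\langle\dot r\rangle_\pm$, where $\langle\cdot\rangle_\pm$ is the average over $\mathcal K_\pm$ with the probability weights $e^{-r_k}/F_\pm$; in $\dot r_k=\dot y_{k+1}-\dot y_k$ the diagonal interaction term is $\mp\tfrac\kappa\alpha e^{-r_k}$, of definite sign on $\mathcal K_\pm$, and by convexity (Cauchy--Schwarz) its weighted average dominates $\tfrac\kappa{\alpha|\mathcal K_\pm|}F_\pm$, which produces the constant $\mp\lambda$; the off-diagonal neighbour terms are products $e^{-r_k}e^{-r_{k\pm1}}$ which are of lower order because all gaps satisfy $r_j\gg1$, and together with the $O(\mathcal N^2)$ error of \eqref{eq:y_k} they yield the remainder $\tfrac1{\lambda F_\pm}(\mathcal N^2+F_+^\theta+F_-^\theta)$. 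Making this last absorption precise by exploiting the structure of \eqref{eq:y_k} (equivalently of \eqref{eq:uedo}), in the spirit of \cite{CZ,MZ,CMYZ}, is where I expect the main difficulty to lie.

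The remaining items are combinations of estimates already available. For (iii), $\dot b=2\nu^+b+2\sum_k a_k^+\big(\tfrac{\ud}{\ud t}a_k^+-\nu^+a_k^+\big)$, so \eqref{eq:b} follows from \eqref{eq:a} after noting $\sum_k|a_k^+|\lesssim\|\vve\|_\ENE\lesssim\mathcal N$ and $\sum_le^{-(z_{l+1}-z_l)}\lesssim F_++F_-$. For (iv) one adds $\tfrac{\ud}{\ud t}\mathcal E+2\mu\mathcal E\lesssim\mathcal N^3+\mathcal N(F_++F_-)$ (from \eqref{eq:E}) and $\tfrac{\ud}{\ud t}\BB+2\mu\BB\lesssim\mathcal N^3+\mathcal N(F_++F_-)$, the latter because $\tfrac{\ud}{\ud t}|\ell_k|^2\le-4\alpha|\ell_k|^2+\cdots$ by \eqref{eq:l}, $\tfrac{\ud}{\ud t}(a_k^-)^2\le2\nu^-(a_k^-)^2+\cdots$ by \eqref{eq:a}, and $2\mu\le\min(4\alpha,2|\nu^-|)$ by \eqref{on:mu}.

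Finally, item (v) is the decisive algebraic cancellation: since $\mathcal M=\mu^{-2}\big(\mathcal F-\tfrac b{2\nu^+}\big)$, we have $\tfrac{\ud}{\ud t}\mathcal M=\mu^{-2}\big(\tfrac{\ud}{\ud t}\mathcal F-\tfrac1{2\nu^+}\dot b\big)$, and the growing contribution $+2\nu^+b$ of $\dot b$ is exactly cancelled by the weight $-b/(2\nu^+)$, leaving $\mu^{-2}(-2\mu\mathcal F-b)=-2\mu^{-1}\big(\mathcal F+\tfrac b{2\mu}\big)\le-2\mathcal N^2$ by \eqref{eq:new2}; the error $\mathcal N^3+\mathcal N(F_++F_-)$ is absorbed, half into $-\mathcal N^2$ using $\mathcal N\ll1$ and half into $C(F_+^2+F_-^2)$ by Young's inequality, giving \eqref{eq:Mbis}. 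Item (vi) then follows by differentiating $R_\pm$: in $\tfrac{\ud}{\ud t}R_+=\big(\tfrac{\ud}{\ud t}[1/F_+]+\tfrac1{\lambda F_+}\tfrac{\ud}{\ud t}\mathcal M\big)\exp(\lambda^{-1}\mathcal M)$ the bad term $\mathcal N^2/(\lambda F_+)$ of \eqref{eq:dist+} is cancelled by the $-\mathcal N^2$ of \eqref{eq:Mbis}, and one uses $C(F_+^2+F_-^2)\le F_+^\theta+F_-^\theta$ for $F_\pm$ small; for $R_-$ the factor $-3\lambda^{-1}\mathcal M$ reverses the sign, so the $-\mathcal N^2$ of \eqref{eq:Mbis} instead reinforces the $\mathcal N^2/F_-$ term, and the same absorption yields \eqref{eq:RF-}.
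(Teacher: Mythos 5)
Your treatment of items (i), (iii), (iv), (v), (vi) is correct and follows the paper's argument closely; the modifications $\tilde b$, the cancellation in $\mathcal M$, and the combination for $R_\pm$ are exactly what is done. The gap is in the monotonicity estimates \eqref{eq:dist+}--\eqref{eq:dist-} of item (ii), and unfortunately it is precisely the part you flagged as the ``main difficulty'' while offering a heuristic that does not work.

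You claim the off-diagonal neighbour terms $e^{-(r_k+r_{k\pm1})}$ are ``of lower order because all gaps satisfy $r_j\gg1$.'' This is false: the diagonal term is $e^{-2r_k}$, and the ratio $e^{-(r_k+r_{k+1})}/e^{-2r_k}=e^{-(r_{k+1}-r_k)}$ is not small in general (the differences $r_{k+1}-r_k$ need not be large or even positive). After the weighted-average manipulation, both contribute quantities of size $F_\pm$; there is no separation of scales. Concretely, your claimed remainder would require $\sum_{k\in\mathcal K_+}e^{-(r_k+r_{k+1})}\lesssim F_+^{1+\theta}+F_+F_-^\theta$, while the left-hand side is of order $F_+^2\gg F_+^{1+\theta}$ since $\theta>1$ and $F_+\ll 1$. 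Moreover, when the neighbouring index $k\pm1$ also lies in $\mathcal K_+$, the corresponding term has the \emph{wrong} sign in the quadratic form, so it cannot be discarded as an error.

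What saves the estimate is a genuine structural cancellation, not a size separation. Writing $\sum_{k\in\mathcal K_+}\dot r_ke^{-r_k}$ as (the negative of) a quadratic form $S_+$ in the variables $(e^{-r_k})_{k\in\mathcal K_+}$, one observes two things. First, the diagonal coefficient, coming from $-\kappa/\alpha$, is \emph{twice} the off-diagonal one, coming from $\kappa/(2\alpha)$. Second, the terms where $k\pm1\notin\mathcal K_+$ carry the favourable sign (since then $\sigma_{k\pm1}\sigma_{k}=-1$, or they vanish at the boundary), so they only help and may be dropped; the remaining off-diagonal terms with $k\pm1\in\mathcal K_+$ pair up into the tridiagonal matrix $A_n$ with $2$ on the diagonal and $-1$ off the diagonal, which is positive definite (Sylvester, principal minors equal $j+1$). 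This yields $S_+\geq\tilde\lambda\sum_{k\in\mathcal K_+}e^{-2r_k}\geq(\tilde\lambda/|\mathcal K_+|)F_+^2$, and only then does one get the clean $-\lambda$ after dividing by $F_+^2$. Without this $2$-versus-$1$ cancellation, the argument fails, so it is not a technical detail but the heart of the proof of (ii).
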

\begin{remark}\label{rk:KK}
When using the quantities $F_\pm$ and $R_\pm$, we tacitly assume that $\mathcal K_\pm$ is not empty.
Otherwise, the respective quantities are ignored. For example, if the set $\mathcal K_+$ is empty, then
\eqref{eq:RF-} rewrites:
\begin{equation*}
\frac{\ud}{\ud t} R_- \geq \left(\lambda+\frac{2}{\lambda}\frac{\mathcal{N}^{2}}{F_{-}}-\frac{2}{\lambda } F_{-}^{\theta-1} \right)\exp\left(- 3\lambda^{-1}\mathcal M \right).
\end{equation*}
When $K=1$, the quantities $F_\pm$ and $R_\pm$ are systematically $0$ and the variables
$r_k$ are not defined. The estimates~\eqref{eq:new2}, \eqref{eq:b}, \eqref{eq:damped} and \eqref{eq:Mbis} hold in this context.
\end{remark}

\begin{proof}
Proof of (i): \eqref{eq:new1} follows from triangle inequality and the definition of $\mathcal N$ and \eqref{eq:new2} follows directly from~\eqref{eq:coer}.

Proof of (ii).
In this computation, we use the convention that terms involving $y_k$ or $\sigma_k$ for $k\leq 0$ or
$k\geq K+1$ are zero, for example by setting $\sigma_0=\sigma_{K+1}=0$.
Similarly, when $e^{-r_{0}}$ and $e^{-r_{K}}$ appears, the corresponding term has to be ignored.
By direct computation and using~\eqref{eq:z},~\eqref{eq:lbis1}-\eqref{eq:lbis}, we obtain for $k=1,\ldots,K$,
\begin{equation*}
\dot{y}_{k}=\dot{z}_{k}+\frac{\dot\ell_{k}}{2\alpha}=-\frac{\kappa}{2\alpha}\sigma_{k-1}\sigma_{k}e^{-r_{k-1}}+\frac{\kappa}{2\alpha}\sigma_{k}\sigma_{k+1}e^{-r_{k}}+O\big(\mathcal{N}^{2}+F^{\theta}_{+}+F_{-}^{\theta}\big).
\end{equation*}
It follows that, for any $k=1,\ldots,K-1$,
\begin{align*}
\dot{r}_{k}=\dot{y}_{k+1}-\dot{y}_{k}
&=-\frac{\kappa}{\alpha}\sigma_{k}\sigma_{k+1}e^{-r_{k}}+\frac{\kappa}{2\alpha}\sigma_{k+1}\sigma_{k+2}e^{-r_{k+1}}+\frac{\kappa}{2\alpha}\sigma_{k-1}\sigma_{k}e^{-r_{k-1}} \\
&\quad +O\big(\mathcal{N}^{2}+F_{+}^{\theta}+F_{-}^{\theta}\big). 
\end{align*}
On the right-hand side of the above expression, the first term is always present for $k=1,\ldots,K-1$,
while the second and third terms might be zero depending on the value of $k$.
For $k\in\mathcal{K}_{+}$, it holds $\sigma_k=\sigma_{k+1}$ and one sees that
\begin{equation*}
\dot{r}_{k}=-\frac{\kappa}{\alpha}e^{-r_{k}}+\frac{\kappa}{2\alpha}\sigma_{k+1}\sigma_{k+2}e^{-r_{k+1}}+\frac{\kappa}{2\alpha}\sigma_{k-1}\sigma_{k}e^{-r_{k-1}} +O\big(\mathcal{N}^{2}+F_{+}^{\theta}+F_{-}^{\theta}\big),
\end{equation*}
with the same observation concerning the second and third terms on the right-hand side.
Thus,
\begin{equation}\label{eq:step}
\sum_{k\in \mathcal{K}_{+}}\dot{r}_{k}e^{-r_{k}}
 =-\frac{\kappa}{2 \alpha} S_+ 
 +O\big(F_{+} \big(\mathcal{N}^{2}+F^{\theta}_{+}+F_{-}^{\theta}\big)\big).
\end{equation}
where $S_+$ denotes
\[
S_+=\sum_{k\in \mathcal{K}_{+}}\left[2 e^{-2r_{k}}
-\sigma_{k+1}\sigma_{k+2}e^{-(r_{k}+r_{k+1})}- \sigma_{k-1}\sigma_{k}e^{-(r_{k-1}+r_{k})}
\right].
\]
We claim that there exists $\tilde \lambda>0$ such that $S_+$ satisfies
\begin{equation}\label{on:Sp}
S_+\geq \tilde \lambda \sum_{k\in \mathcal{K}_{+}} e^{-2r_{k}} .
\end{equation}
Indeed, first, recall that the symmetric matrix of size $n$
\[
A_n= \begin{pmatrix}
2 & -1 & 0 & \ldots & 0 \\
-1 & 2 & -1 & \ddots & \vdots \\
0 &-1 & 2 & \ddots & 0 \\
\vdots & \ddots & \ddots & \ddots & -1 \\
0 & \ldots& 0 & -1 & 2 
\end{pmatrix}
\]
is definite positive by the Sylvester criterion since 
for any $j\in \{1,\ldots n\}$, the $j$th leading principal minor of this matrix, 
\emph{i.e.} the determinant of its upper-left $j\times j$ sub-matrix, is positive
(its value is $j+1$).

Second, observe that in the sum defining $S_+$, for given $k\in \mathcal K_+$, if
$k-1\not \in \mathcal K_+$, then $\sigma_{k-1}\sigma_{k}=1$ (if $k\geq 2$) or $0$ (if $k=1$)
and thus the corresponding term is positive or zero and can be ignored in establishing a lower bound for $S_+$. The same property is true for the term 
corresponding to $\sigma_{k+1}\sigma_{k+2}$ if $k+1\not\in \mathcal K_+$. 
Letting $n=\card(\mathcal K_+)$, this observation justifies that $S_+$ is lower bounded by the quadratic form associated 
to the matrix $A_n$ taken at the vector of $\RR^n$ of components $\{e^{-r_k}: k\in \mathcal K_+\}$.
This shows that \eqref{on:Sp} holds for some $\tilde \lambda>0$.

It follows from \eqref{eq:step} and \eqref{on:Sp} that there exists $\lambda>0$
such that
\begin{equation*}
\sum_{k\in \mathcal{K}_{+}}\dot{r}_{k}e^{-r_{k}} \le -\lambda F^{2}_{+}+\frac 1\lambda F_{+}\big(\mathcal{N}^{2}+F^{\theta}_{+}+F_{-}^{\theta}\big).
\end{equation*}
Thus,
\begin{equation*}
\frac{\ud}{\ud t}\left[\frac{1}{F_{+}}\right]=\frac1{F_{+}^2} \sum_{k\in \mathcal{K}_{+}}\dot{r}_{k}e^{-r_{k}}\le -\lambda+\frac{1}{\lambda F_{+}}\big(\mathcal{N}^{2}+F^{\theta}_{+}+F_{-}^{\theta}\big).
\end{equation*}
The estimate \eqref{eq:dist-} concerning $\frac{\ud}{\ud t} [\frac{1}{F_{-}} ]$ is proved similarly.

Proof of (iii). It follows from~\eqref{eq:a} and the bound $|a_{k}^{+}|\lesssim \|\vec{\varepsilon}\,\|_{\ENE}\leq \mathcal{N}.$

Proof of (iv). By the definition of $\mathcal{F}$ and~\eqref{eq:l}, \eqref{eq:a}, \eqref{eq:E}, we have
\begin{align*}
\frac{\ud}{\ud t}\mathcal{F}
&=\frac{\ud }{\ud t}\mathcal{E}+2\sum_{k=1}^{K}\dot{\ell}_{k}\ell_{k}+\frac{1}{\mu}\sum_{k=1}^{K}\dot{a}^{-}_{k}a^{-}_{k}\\
&\le -2\mu \mathcal{E}-4\alpha\sum_{k=1}^{K}\ell_{k}^{2}+\frac{\nu^{-}}{\mu}\sum_{k=1}^{K}(a_{k}^{-})^{2}+O\big(\mathcal{N}^{3}+\mathcal{N}\big(F_{+}+F_{-}\big)\big).
\end{align*}
Since $0<\mu\leq\alpha$ and $0<\mu \leq|\nu^{-}|$ (see \eqref{on:mu}), we obtain~\eqref{eq:damped} for $\mathcal{F}$. The proof for $\mathcal{G}$ is the same.

Proof of (v). From~\eqref{eq:b} and~\eqref{eq:damped}, we estimate
\begin{equation*}
\frac{\ud}{\ud t}\mathcal{M}=\frac{1}{\mu^{2}}\left(\frac{\ud }{\ud t}\mathcal{F}-\frac{1}{2\nu^{+}} \frac{\ud }{\ud t}b\right)
\le -\frac{2}{\mu} \left(\mathcal{F}+\frac{b}{2\mu}\right)+O\big(\mathcal{N}^{3}+\mathcal{N}\big(F_{+}+F_{-}\big) \big).
\end{equation*}
Thus, from~\eqref{eq:new2} and then for $\mathcal{N}$ small enough,
\begin{equation*}
\frac{\ud}{\ud t}\mathcal{M}
\le -2\mathcal{N}^{2}+O\big(\mathcal{N}^{3}+\mathcal{N}\big(F_{+}+F_{-}\big) \big)
\leq -\mathcal{N}^{2}+O\big( F_{+} ^2+F_{-}^2\big).
\end{equation*}

Proof of (vi). From~\eqref{eq:dist+} and~\eqref{eq:Mbis}, we estimate
\begin{align*}
\frac{\ud }{\ud t}R_+
&=\left(\frac{\ud}{\ud t}\left[\frac{1}{F_{+}}\right]+\frac{1}{\lambda F_{+}}\frac{\ud }{\ud t}\mathcal{M}\right)\exp (\lambda^{-1}\mathcal{M}) \\
&\le \left(-\lambda +\frac{1}{\lambda F_{+}}\big(F^{\theta}_{+}+F_{-}^{\theta}+CF^{2}_{+}+CF^{2}_{-}\big)
\right)\exp (\lambda^{-1}\mathcal{M}) ,
\end{align*}
which implies~\eqref{eq:RF+}. The estimate for $\frac{\ud}{\ud t}R_-$ is proved similarly.
Note that the coefficient~$3$ of the factor $\exp (-3\lambda^{-1}\mathcal{M})$ in the definition $R_-$
allows us to obtain a positive factor $\mathcal N^2$ in the right-hand side of \eqref{eq:RF-}.
\end{proof}

\subsection{Long-time energy asymptotics}

\begin{lemma}
Let any $1<\theta<\min (p-1,\frac{5}{4})$.
 In the context of Lemmas~\ref{le:dec} and~\ref{le:new},
it holds
\begin{equation}\label{eq:E:R}
E(\vec u)=KE(Q,0)-c_{1}\kappa F_{+} +c_{1}\kappa F_{-}+O(\mathcal{N}^2+F^{\theta}_{+}+F^{\theta}_{-}).
\end{equation}
\end{lemma}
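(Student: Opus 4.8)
The plan is to compute $E(\vec u)$ directly from its definition \eqref{energy} using the decomposition $\vec u = \sum_k \vec Q_k + \vve$ from Lemma~\ref{le:dec}, expanding the quadratic terms and the potential term, and carefully tracking which contributions are of leading order $F_\pm$ and which are lower-order errors absorbed into $O(\mathcal N^2 + F_+^\theta + F_-^\theta)$. The expected output is that the cross-terms between distinct solitary waves produce exactly $-c_1\kappa F_+ + c_1 \kappa F_-$, the diagonal terms reassemble $K E(Q,0)$, and everything involving $\vve$ or $\ell_k$ is of size $\mathcal N^2$.

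First I would write $\partial_t u = -\sum_k \ell_k \partial_x Q_k + \eta$, so that
\[
\tfrac12 \int (\partial_t u)^2 = \tfrac12 \sum_k \ell_k^2 \|Q'\|_{L^2}^2 + O\Big(\sum_{k\ne k'} |\ell_k||\ell_{k'}| e^{-\frac34|z_k-z_{k'}|}\Big) + O(\|\vve\|_\ENE \textstyle\sum_k|\ell_k|) + \tfrac12\|\eta\|_{L^2}^2,
\]
all of which is $O(\mathcal N^2)$ by Cauchy-Schwarz and \eqref{tech1}. For the gradient and mass terms, write $u = R + \e$ and expand: the pure $\e$ part and the $R$-$\e$ cross term $\int (\partial_x R \partial_x \e + R\e)$ are $O(\mathcal N^2)$ after using the orthogonality \eqref{ortho} together with the equation \eqref{eq:Q} to replace $-\partial_x^2 Q_k + Q_k$ by $f(Q_k)$ (so the cross term becomes $\sum_k \langle f(Q_k),\e\rangle$, which is not obviously small and must be paired with part of the potential expansion — see below). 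The diagonal part $\tfrac12\int\{(\partial_x Q_k)^2 + Q_k^2\}$ contributes to $E(Q,0)$, and the off-diagonal parts $\int\{\partial_x Q_k \partial_x Q_{k'} + Q_k Q_{k'}\}$ for $k\ne k'$ are, after integration by parts and \eqref{eq:Q}, equal to $\langle f(Q_k), Q_{k'}\rangle$, whose asymptotics are given by \eqref{tech4}.

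Next, for the potential term, I would use \eqref{tech3} to write $\int F(R) = \sum_k \int F(Q_k) + \sum_{k\ne k'} \langle f(Q_k), Q_{k'}\rangle + O(\sum e^{-\frac54 r_k})$, and then handle $\int\{F(R+\e) - F(R)\} = \int f(R)\e + O(\|\e\|_{H^1}^2)$ by Taylor expansion; the term $\int f(R)\e$ combines with the $R$-$\e$ cross term noted above (since $\int f(R)\e = \sum_k \langle f(Q_k),\e\rangle + \langle G,\e\rangle$ and $\langle G,\e\rangle = O(\mathcal N \sum e^{-r_k})$ by \eqref{tech2}), so these cancel up to acceptable errors. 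Collecting: the $\langle f(Q_k),Q_{k'}\rangle$ cross-terms appear with coefficient $+\tfrac12$ (from $\tfrac12\int\{(\partial_x R)^2 + R^2\}$) minus $1$ (from $-\int F(R)$), i.e. with total coefficient $-\tfrac12$, so by \eqref{tech4} the leading contribution is $-\tfrac12 \sum_{k\ne k'} \sigma_k\sigma_{k'} c_1\kappa e^{-|z_k - z_{k'}|}$; since only neighbors $k' = k\pm1$ contribute at leading order (non-neighbors are $O(e^{-\frac32 r})$ by an inequality like \eqref{eq:exa}), and each neighboring pair is counted twice, this is $-c_1\kappa \sum_{k=1}^{K-1}\sigma_k\sigma_{k+1} e^{-(z_{k+1}-z_k)} = -c_1\kappa(F_+ - F_-)$ after replacing $z_{k+1}-z_k$ by $r_k$ via \eqref{eq:new1} (the difference contributes $O(\mathcal N \cdot e^{-r_k})$, hence $O(\mathcal N^2 + F_\pm^2)$). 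Finally $\sum_k\{\tfrac12\int[(\partial_x Q_k)^2 + Q_k^2] - \int F(Q_k)\} = K E(Q,0)$, giving \eqref{eq:E:R}.

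The main obstacle is bookkeeping the half-integer powers: one must verify that every "error" term — the $\ell_k \ell_{k'}$ off-diagonal products, the $\langle G,\e\rangle$ term, the $O(e^{-\frac54 r_k})$ from \eqref{tech3}, the non-neighbor interactions, and the $|r_k - (z_{k+1}-z_k)|$ discrepancies — is genuinely bounded by $\mathcal N^2 + F_+^\theta + F_-^\theta$ for the given range $1 < \theta < \min(p-1,\frac54)$. The constraint $\theta < \frac54$ is exactly what makes the $e^{-\frac54 r_k}$ term in \eqref{tech3} admissible, and $\theta < p-1$ controls the Taylor remainders; these should be checked but require no new ideas beyond the estimates already in part~(i) of the interaction lemma and in Lemma~\ref{le:dec}.
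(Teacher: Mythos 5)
Your proposal is correct and follows essentially the same route as the paper: expand $E(\vec u)$ in the decomposition $\vec u = \sum_k \vec Q_k + \vve$, note that the $(\partial_t u)^2$ term and all terms quadratic in $\vve$ (or $\ell_k$) are $O(\mathcal N^2)$, observe that the linear-in-$\e$ cross term $\int(\partial_x R\,\partial_x\e + R\e)$ cancels against the linear-in-$\e$ part of $-\int F(R+\e)$ up to $-\int G\e = O(\mathcal N\sum e^{-r_k})$, and reduce the remaining $Q_k$--$Q_{k'}$ interactions via \eqref{eq:Q}, \eqref{tech3}, \eqref{tech4} and \eqref{eq:new1} to $-c_1\kappa F_+ + c_1\kappa F_-$ plus admissible errors. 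Your bookkeeping with $\sum_{k\ne k'}$ (giving coefficient $-\tfrac12$ and a double count) is equivalent to the paper's $\sum_{k<k'}$ with coefficient $-1$, and the slight confusion between $\langle Q_k, f(Q_{k'})\rangle$ and $\langle f(Q_k), Q_{k'}\rangle$ is harmless since \eqref{tech4} gives both the same leading asymptotics.
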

\begin{proof}
Expanding $E(u,\partial_t u)$ using the decomposition~\eqref{def:ee}, integration by parts,
the equation $-\partial_{x}^{2} Q+Q-f(Q)=0$ and the definition of $G$ in~\eqref{def:G}, we find
\begin{align*}
2 E(u,\partial_{t}u)
&= \int \left|\partial_{t}u\right|^{2} + 2 E\left(R,0\right)
-2\int G\e \\
&\quad
+ \int \left(|\partial_{x}\e|^2 + \e^2 -2 F(R+\e)+2 F(R)+2f(R)\e\right).
\end{align*}
Thus, using \eqref{tech2}, the Cauchy-Schwarz and Sobolev inequalities, it holds
\begin{equation*}
2 E(u,\partial_{t}u)
=\int \left|\partial_{t}u\right|^{2} + 2 E\left(R,0\right)
+O \big(\mathcal{N}^2+F^{2}_{+}+F^{2}_{-}\big).
\end{equation*}
Note also that~\eqref{def:ee} implies
\begin{equation*}
\int \left|\partial_{t}u\right|^{2}\lesssim \int \bigg( \left|\eta\right|^{2}+\sum_{k=1}^{K}\big|\ell_{k}\partial_{x}Q_{k}\big|^{2}\bigg)\lesssim \mathcal{N}^{2}.
\end{equation*}
Then, by direct computation, next $-\partial_{x}^{2} Q+Q-f(Q)=0$ and \eqref{tech3},
\begin{align*}
E(R,0)
&=KE(Q,0)+\sum_{k<k'}\int \left[(\partial_{x} Q_k) (\partial_{x} Q_{k'})+Q_k Q_{k'} -f(Q_k )Q_{k'} -f(Q_{k'} )Q_k \right] \\
&\quad-\int \bigg(F(R)-\sum_{k=1}^{K}F(Q_k )-\sum_{k\ne k'}f(Q_k )Q_{k'} \bigg) \\
&=KE(Q,0)-\sum_{k< k'}\langle f(Q_k), Q_{k'}\rangle+ O (F^{\theta}_{+}+F^{\theta}_{-}).
\end{align*}
Last, from~\eqref{tech4},~\eqref{eq:new1} and the definition of $F_{+}$ and $F_{-}$, we observe that
\begin{equation*}
\sum_{k<k'}\langle f(Q_{k}), Q_{k'}\rangle=c_{1}\kappa F_{+}-c_{1}\kappa F_{-}+O(\mathcal{N}^{2}+F^{\theta}_{+}+F^{\theta}_{-})
\end{equation*}
Indeed, in the above double sum $\sum_{k<k'}$ in $k$ and $k'$, the terms corresponding to $k'=k+1$ contribute to $\pm c_1 \kappa F_\pm$ (depending on $k\in \mathcal K_\pm$) and the other terms (\emph{i.e.} $k'\geq k+2$) only contribute to the error term (see also \eqref{eq:exa}).

Gathering all the above estimates, we have proved~\eqref{eq:E:R}.
\end{proof}

Combining~\eqref{eq:E:R} with \eqref{eq:energy} and~\eqref{eq:energlimit}, we obtain the following result.
\begin{corollary}\label{cor:ener}
Let $\vec{u}$ be a global solution of~\eqref{nlkg} satisfying the decomposition given by Lemma~\ref{le:dec} on $[t,\infty)$, for some $t\geq 0$. Let $1<\theta<\min (p-1,\frac{5}{4})$. Then
\begin{equation}\label{suite}
2 \alpha \int_{t}^{\infty}\|\partial_{t}u(s)\|_{L^{2}}^{2}\ud s
= -c_{1}\kappa F_+(t)+c_{1}\kappa F_-(t) +O(\mathcal{N}^2(t)+F^{\theta}_{+}(t)+F^{\theta}_{-}(t)).
\end{equation}
\end{corollary}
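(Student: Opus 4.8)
The plan is to combine the long-time energy expansion~\eqref{eq:E:R} with the exact energy dissipation identity~\eqref{eq:energy} and the limit of the energy~\eqref{eq:energlimit}, evaluated along the time interval $[t,\infty)$. Since $\vec u$ is a global solution decomposing as in Lemma~\ref{le:dec} on $[t,\infty)$, the conclusion~\eqref{eq:energ\-limit} of Theorem~\ref{th:1} applies with the same $K$, and the quantities $\mathcal N$, $F_\pm$ are defined for all $s\in[t,\infty)$.

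First I would fix $1<\theta<\min(p-1,\frac54)$ and apply~\eqref{eq:E:R} at time $t$ to get
\begin{equation*}
E(\vec u(t))=KE(Q,0)-c_{1}\kappa F_{+}(t)+c_{1}\kappa F_{-}(t)+O(\mathcal N^2(t)+F_{+}^{\theta}(t)+F_{-}^{\theta}(t)).
\end{equation*}
Next, integrating~\eqref{eq:energy} on $[t,T]$ and letting $T\to\infty$, using that the left-hand side converges (by monotonicity, $E(\vec u(s))$ is nonincreasing and bounded below thanks to the global bound of Theorem~\ref{pr:bound}), gives
\begin{equation*}
2\alpha\int_{t}^{\infty}\|\partial_{t}u(s)\|_{L^2}^2\ud s=E(\vec u(t))-\lim_{s\to\infty}E(\vec u(s))=E(\vec u(t))-KE(Q,0),
\end{equation*}
where the last equality is exactly~\eqref{eq:energlimit}. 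Substituting the expansion of $E(\vec u(t))$ then yields~\eqref{suite} after cancelling the $KE(Q,0)$ terms.

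The only point requiring a little care is justifying the application of~\eqref{eq:energlimit}: that identity was stated in the Remark following Theorem~\ref{th:1} for solutions satisfying the subsequential decomposition~\eqref{eq:decompo}, and one must observe that a solution decomposing on $[t,\infty)$ in the sense of Lemma~\ref{le:dec} does satisfy~\eqref{eq:decompo} along any sequence $s_n\to\infty$ (with signs $\sigma_k$ and positions $z_k(s_n)$, using that $\|\vve\|_\ENE$ stays small and $z_{k+1}-z_k\to\infty$), so the value of the energy limit is indeed $KE(Q,0)$ with the same $K$. The finiteness of $\int_t^\infty\|\partial_t u\|_{L^2}^2$ is not an extra hypothesis: it follows from~\eqref{eq:energy} together with the lower bound on $E$ coming from Theorem~\ref{pr:bound}. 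Once these two facts are in hand, the corollary is immediate and there is no genuine obstacle — it is a direct bookkeeping combination of three already-established estimates.
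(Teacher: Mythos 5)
Your argument is exactly the paper's: it introduces Corollary~\ref{cor:ener} with the one-line remark ``Combining~\eqref{eq:E:R} with \eqref{eq:energy} and~\eqref{eq:energlimit}, we obtain the following result,'' and your write-up simply spells out this combination (together with the minor justification that \eqref{eq:energlimit} applies with the same $K$). Correct, and the same approach as the paper.
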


\subsection{General estimates for global solutions}
The following result is similar to Proposition~3.1 in \cite{CMYZ}.
We repeat the proof for the sake of completeness.

\begin{proposition}\label{pr:unif}
There exists a universal constant $\delta_{1} >0$ such that the following holds. Let $0<\delta<\delta_{1}$ and $\vec u$ be a global solution of \eqref{nlkg} satisfying \eqref{eq:decompo} with $K\geq 1$. Let $T_{\delta}\gg 1$ be such that $\vec u$ admits a decomposition as in Lemma~\ref{le:dec} in a neighborhood of $T_\delta$, with 
\begin{equation}\label{at:Td}
[\mathcal{N}(T_{\delta})]^2+b(T_\delta)+ F_{+}(T_{\delta})+F_{-}(T_{\delta})\le \delta^{2}.
\end{equation}
Then, for all $t\geq T_\delta$, it holds
\begin{equation}\label{eq:unif}
[\mathcal{N}(t)]^2+ b(t)+ F_{+}(t)+ F_{-}(t)\lesssim \delta^{2},
\end{equation}
and
\begin{equation}\label{est:F-}
F_{-}(t)\leq 3 \left(\delta^{-2}+\lambda (t-T_{\delta})\right)^{-1}.
\end{equation}
\end{proposition}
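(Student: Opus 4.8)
The strategy is a continuity (bootstrap) argument on the interval $[T_\delta, \infty)$. The plan is to fix a large constant $\Lambda>1$ (depending only on the implicit constants in Lemma~\ref{le:new}) and define $\TS$ to be the supremum of times $T\geq T_\delta$ such that the bootstrap bound $[\mathcal N(t)]^2 + b(t) + F_+(t) + F_-(t) \leq \Lambda \delta^2$ holds on $[T_\delta, T]$. The local well-posedness and continuity of the decomposition (Lemma~\ref{le:dec}) guarantee $\TS > T_\delta$; the goal is to show that on $[T_\delta,\TS)$ the same quantities are in fact bounded by $\frac{\Lambda}{2}\delta^2$, forcing $\TS = \infty$, which gives \eqref{eq:unif}. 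Throughout, I note that on the bootstrap interval $\mathcal N$, $F_\pm$ are all $O(\delta)$, so cubic and $\theta$-power terms are lower order: $\mathcal N^3 + \mathcal N(F_+ + F_-) \lesssim \delta\,(\mathcal N^2 + F_+ + F_-)$ and $F_\pm^\theta \lesssim \delta^{2\theta-2} F_\pm \ll F_\pm$.

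The core of the argument is to combine the differential inequalities of Lemma~\ref{le:new} in the right order. First, I would control $F_-$ directly from \eqref{eq:dist-}: since $F_-^\theta$ and $\mathcal N^2$ are absorbed for $\delta$ small (using $\mathcal N^2 \leq \Lambda\delta^2$ and that the $\frac{1}{\lambda F_-}(\cdots)$ term is then $\leq \frac{\lambda}{2}$ after absorbing, or more carefully using the refined functional $R_-$ of part (vi)), I get $\frac{\ud}{\ud t}[F_-^{-1}] \geq \frac{\lambda}{2}$ on the bootstrap interval, hence $F_-^{-1}(t) \geq F_-^{-1}(T_\delta) + \frac{\lambda}{2}(t - T_\delta) \geq \delta^{-2} + \frac{\lambda}{2}(t-T_\delta)$ using \eqref{at:Td}. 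Refining the constant (using $R_-$ to handle the $\mathcal N^2$ term cleanly rather than brutal absorption) yields exactly \eqref{est:F-}, i.e. $F_-(t) \leq 3(\delta^{-2} + \lambda(t-T_\delta))^{-1}$; in particular $F_-(t) \leq 3\delta^2$ for all $t \geq T_\delta$. Next, for $F_+$ I would use \eqref{eq:dist+}/\eqref{eq:RF+}: since $R_+ = F_+^{-1}\exp(\lambda^{-1}\mathcal M)$ is non-increasing up to the small error $\frac{2}{\lambda F_+}(F_+^\theta + F_-^\theta)\exp(\lambda^{-1}\mathcal M)$, and $\mathcal M$ is comparable to $\mathcal N^2 = O(\delta^2)$ by \eqref{eq:new2} so that $\exp(\pm\lambda^{-1}\mathcal M)$ is bounded above and below by universal constants, I conclude $F_+(t) \lesssim F_+(T_\delta) \lesssim \delta^2$ (the error terms only improve the bound once $\delta$ is small, since $F_\pm^\theta \ll F_\pm$). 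Then the damped-component estimate \eqref{eq:damped} for $\mathcal F$, integrated with the exponential weight $e^{2\mu t}$ and using the source term $\mathcal N^3 + \mathcal N(F_+ + F_-) \lesssim \delta(F_+ + F_-) \lesssim \delta^3$ together with \eqref{at:Td}, gives $\mathcal F(t) \lesssim \delta^2$; similarly for $\BB$. Combined with \eqref{eq:new2} this yields $[\mathcal N(t)]^2 \lesssim \delta^2$, and \eqref{eq:b} integrated likewise (Grönwall on $b$, with $b(T_\delta)\leq\delta^2$ and source $\lesssim \delta(F_++F_-) + \delta^3 \lesssim \delta^3$, noting $\nu^+ < 0$ so the linear part is favorable) gives $b(t) \lesssim \delta^2$. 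Summing these four bounds closes the bootstrap.

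The main obstacle is the bookkeeping of constants so that the final bounds come out as $\frac{\Lambda}{2}\delta^2$ strictly better than $\Lambda\delta^2$: each of the four quantities is controlled by a different mechanism (monotone ODE for $F_-$, quasi-monotone weighted functional for $F_+$, exponentially damped Grönwall for $\mathcal F$ and $\BB$, exponentially unstable but favorably-signed Grönwall for $b$), and one must check that the implied constants in \eqref{eq:unif} do not depend on $\Lambda$ — which is where smallness of $\delta$ (relative to $\Lambda$) is used to absorb all the coupling terms $\mathcal N(F_++F_-)$, $\mathcal N^3$, $F_\pm^\theta$. The delicate point for \eqref{est:F-} specifically is getting the sharp constant $3$ and factor $\lambda$: this requires using the refined functional $R_-$ from part (vi) rather than \eqref{eq:dist-} directly, because the $\frac{2}{\lambda F_-}\mathcal N^2$ term in \eqref{eq:RF-} has the correct sign (it helps), and $\exp(-3\lambda^{-1}\mathcal M)$ stays within a factor close to $1$ of the relevant range once $\mathcal N$ is small, so that integrating $\frac{\ud}{\ud t} R_- \geq \lambda - (\text{small})$ and translating back to $F_-$ costs only the factor $3$ recorded in the statement.
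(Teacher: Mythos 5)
Your plan has two fundamental gaps, both of which would make the argument collapse rather than just lose constants.

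\textbf{Sign of $\nu^+$ and the control of $b$.} You write that $\nu^+<0$ so the linear part of the $b$-equation is favorable. This is backwards: from \eqref{eq:Y}, $\nu^+=-\alpha+\sqrt{\alpha^2+\nu_0^2}>0$. Thus $\dot b=2\nu^+b+O(\cdots)$ is exponentially \emph{unstable}, and a Gr\"onwall inequality starting from $b(T_\delta)\le\delta^2$ gives a bound that grows like $e^{2\nu^+ t}$ --- useless. This is precisely why $b$ is singled out as the delicate quantity. The paper's actual argument is a contradiction scheme: suppose $b$ reaches $\tfrac C2\delta^2$ at some $t_2$; find the last $t_1<t_2$ with $b(t_1)=\tfrac C4\delta^2$; because $b$ is exponentially growing at rate $2\nu^+$, the elapsed time is $t_2-t_1=\tfrac{\log 2}{2\nu^+}+O(\delta^{1/3})$, giving the lower bound $\int_{t_1}^{t_2}b\gtrsim C\delta^2$. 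On the other hand, the energy dissipation identity via Corollary~\ref{cor:ener} gives $\int_{T_\delta}^\infty\|\partial_t u\|_{L^2}^2\lesssim\delta^2$, and by expressing $a_k^+$ through $a_k^-$ and $\langle\eta,Y_k\rangle$ this bounds $\int_{t_1}^{t_2}b\lesssim\delta^2$ \emph{independently of $C$}, a contradiction for $C$ large. None of this appears in your plan, and it cannot be replaced by a Gr\"onwall.

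\textbf{Control of $F_+$ via $R_+$.} Your statement that $R_+$ being non-increasing implies $F_+(t)\lesssim F_+(T_\delta)$ is inverted: since $R_+=F_+^{-1}\exp(\lambda^{-1}\mathcal M)$ with the exponential factor $=1+O(\delta^2)$, a decrease of $R_+$ corresponds to an \emph{increase} of $F_+$. Worse, \eqref{eq:RF+} says $\tfrac{\ud}{\ud t}R_+\lesssim -\lambda$ (modulo errors), and since $R_+>0$ this cannot persist; it is exactly the inequality used in Proposition~\ref{prop:alt_sign} to derive a \emph{contradiction} and conclude $\mathcal K_+=\emptyset$. At the stage of Proposition~\ref{pr:unif} one does not yet know the signs alternate, so a bound on $F_+$ must come from elsewhere. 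The paper obtains it from the positivity of the left side of \eqref{suite} in Corollary~\ref{cor:ener}, which forces $F_+\leq 2F_-+O(\mathcal N^2)$; combined with the already established bounds on $F_-$ and $\mathcal N$, this gives $F_+\lesssim\delta^2$. This energy-dissipation link between $F_+$ and $F_-$ is the crucial structural ingredient missing from your proposal, and without it the bootstrap cannot be closed.

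Your treatment of $F_-$ via $R_-$ and of $\mathcal F$, $\BB$ via the damped Gr\"onwall does follow the paper and is essentially correct, but the two errors above concern precisely the two quantities ($b$ and $F_+$) that behave badly under the naive ODE comparison, and fixing them requires the energy-dissipation inputs from Corollary~\ref{cor:ener} that your plan does not invoke.
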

\begin{proof}
For a constant $C>1$ to be taken large enough, we introduce the following bootstrap estimates
\begin{equation}\label{BS:1}
\mathcal{N}\le C\delta,\quad b\le C\delta^{2},\quad F_{+}\le \delta^{\frac{3}{2}},\quad F_{-}\le \delta^{\frac{3}{2}},
\end{equation}
and we set 
\begin{equation*}
\TS=\sup\big\{t\in[T_{\delta},\infty) \text{ such that \eqref{BS:1} holds on $[T_{\delta},t]$} \big\}>T_\delta.
\end{equation*}
We prove $\TS=\infty$ by
strictly improving the bootstrap estimate~\eqref{BS:1} on $[\TD,\TS)$ for $C>1$ large enough.
In the rest of the proof, the implicit constants in $\lesssim$ or $O(\cdot)$ do not depend on the constant $C$ of the bootstrap estimate~\eqref{BS:1}. 

\emph{Estimate on $\mathcal N$.}
From~\eqref{eq:damped} and \eqref{BS:1}, it holds on $[\TD,\TS)$,
\begin{equation*}
\frac \ud{\ud t} \left[ e^{2\mu t} \mathcal F\right] \lesssim C^3 \delta^3 e^{2\mu t} + C \delta^{\frac 5 2} e^{2\mu t} \leq
 \delta^2 e^{2\mu t},
 \end{equation*}
for $\delta>0$ small enough (depending on $C$). From~\eqref{eq:new2} and \eqref{at:Td}, $\mathcal F(\TD)\lesssim \delta^2$. Thus, integrating the above estimate on $[\TD,t]$,
for any $t\in [\TD,\TS)$, $\mathcal F(t)\lesssim \delta^2$.
In particular, by \eqref{eq:new2}, we obtain
\begin{equation*}
\|\vve\|_\ENE^2 \lesssim \mathcal F+b
\lesssim C \delta^2.
\end{equation*}
Arguing similarly for the quantity $\BB$, we have
\begin{equation}\label{bnf3}
\sum_{k=1}^{K} |\ell_k|^2 + \sum_{k=1}^{K} (a_k^-)^2\lesssim \BB \lesssim \delta^{2}.
\end{equation}
Hence we obtain, for all $t \in [\TD, \TS)$,
\begin{equation}\label{est:N}
\mathcal N(t) \lesssim \sqrt{C} \delta.
\end{equation}
For $C$ large enough, this strictly improves the bootstrap estimate \eqref{BS:1} on~$\mathcal N$ on the interval $[\TD,\TS)$.

\emph{Estimate on $b$.}
Now, we prove that for $C$ large enough, it holds for all $t \in [\TD,\TS)$, 
\begin{equation}\label{eq:bbb}
b(t) \le \frac C2 \delta^2.
\end{equation}
From \eqref{suite} in Corollary~\ref{cor:ener} and \eqref{at:Td}, we have
\begin{equation}\label{int1.3}
\int_{\TD}^{\infty} \|\partial_{t} u(s)\|_{L^2}^2 \ud s \lesssim \mathcal N^2(\TD)+ F_+(\TD)+F_-(T_\delta)\lesssim \delta^2.
\end{equation}
By \eqref{at:Td}, we have $b(\TD)\leq \delta^2$.
For the sake of contradiction, take $C>4$ large and assume that there exists $t_2\in [\TD,\TS)$ such that
\begin{equation*}
b(\Td)= \frac C2 \delta^2,\quad b(t)< \frac C2 \delta^2 \quad \text{on } [\TD,t_{2}).
\end{equation*}
On the one hand, by continuity of $b$, there exists $\Tu\in [\TD,t_{2})$ such that
\begin{equation*}
b(\Tu)=\frac C4 \delta^2\quad \text{and}\quad
b(t)> \frac C4 \delta^2 \quad \text{on } (\Tu,t_{2}].
\end{equation*}
Using~\eqref{eq:b} and the bootstrap estimates~\eqref{BS:1}, we have
\begin{equation*}
\frac \ud{\ud t} b = 2\nu^+ b + O(C^3 \delta^3 + C \delta^{\frac 52})
\end{equation*}
which implies (for $\delta$ small enough depending on $C$)
\begin{equation}\label{bnf5}
\Td-\Tu=\frac{\log 2}{2\nu^+}+O(\delta^{\frac{1}{3}}),
\end{equation}
and thus
\begin{equation}\label{bnf6}
\int_{\Tu}^{\Td} b(s) \ud s \gtrsim C \delta^2.
\end{equation}

On the other hand, by \eqref{def:gs}, \eqref{def:ee},~\eqref{bnf3},~\eqref{int1.3} and \eqref{bnf5},
\begin{equation}\label{bnf7}
\int_{\Tu}^{\Td} \|\eta(t)\|_{L^2}^2 \ud t \lesssim \int_{\Tu}^{\Td} \bigg(\|\partial_{t}u(t)\|_{L^{2}}^{2}+\sum_{k=1}^{K}\|\ell_{k}\partial_{x}Q_{k}(t)\|_{L^{2}}^{2}\bigg)\ud t\lesssim\delta^{2}.
\end{equation}
By the definition of $a_k^\pm$, one has
\begin{equation*}
a_k^+=\zeta^+\langle \e,Y_k\rangle+\langle \eta,Y_k\rangle, \quad
a_k^-=\zeta^-\langle \e,Y_k\rangle+\langle \eta,Y_k\rangle
\end{equation*}
and thus
\begin{equation*}
a_k^+ = \frac{\zeta^+}{\zeta^-} a_k^- + \frac{\zeta^- -\zeta^+}{\zeta^-} \langle \eta,Y_k\rangle.
\end{equation*}
Combining \eqref{bnf3}, \eqref{bnf5} and \eqref{bnf7}, we find the estimate
$ \int_{\Tu}^{\Td} b(s) \ud s\lesssim \delta^{2}$,
which contradicts~\eqref{bnf6} for $C$ large enough. This proves \eqref{eq:bbb}.

\emph{Estimate on $F_{-}$.}
Let $1<\theta<\min (p-1,\frac{5}{4})$.
First, from~\eqref{suite} in Corollary~\ref{cor:ener} and~\eqref{BS:1}, we have
\begin{equation}\label{est:F+F-}
F_{+}\le 2F_{-}+O(\mathcal{N}^{2})\quad \mbox{and}\quad
F_{+}^\theta\le 2F_{-}^\theta+\mathcal{N}^{2}.
\end{equation}
Second, from~\eqref{eq:RF-},~\eqref{BS:1} and~\eqref{est:F+F-}, we have
\begin{align*}
\frac{\ud}{\ud t} R_- 
&\geq \left(\lambda+\frac{2}{\lambda F_{-}}\left(\mathcal{N}^{2}-F_{+}^{\theta}-F_{-}^{\theta}\right)\right)\exp\left( -3\lambda^{-1}\mathcal M \right)\\
&\geq \left(\lambda-\frac 6\lambda F^{\theta-1}_{-}\right)\exp\left( -3\lambda^{-1}\mathcal M \right)
\geq \frac{\lambda}{2}.
\end{align*}
By integration on $[\TD,t]$ for any $t\in [\TD,\TS)$, it holds 
\begin{equation}\label{est:R-bis}
R_-(t)\geq R_-(T_{\delta})+\frac{\lambda}{2}(t-T_{\delta}).
\end{equation}
By the definition of $R_-$ and~\eqref{at:Td}, we have $R_-(\TD) \geq \frac 12 \delta^{-2}$. 
Thus, \eqref{est:R-bis} implies that, for any $t\in [\TD,\TS)$
\begin{equation*}
F_{-}(t)\leq \frac 3{2 R_-}\leq \frac{3}{2} \left(\frac 12\delta^{-2}+\frac \lambda 2(t-T_{\delta})\right)^{-1}\le 3\delta^{2}.
\end{equation*}
This is strictly improves the bootstrap estimate~\eqref{BS:1} of $F_{-}$ on the interval $[\TD,\TS)$
and proves \eqref{est:F-}.

\emph{Estimate on $F_{+}$.}
From~\eqref{est:N},~\eqref{est:F+F-} and~\eqref{est:F-}, we observe that
\begin{equation*}
F_{+}\leq 2F_{-}+O(\mathcal{N}^{2})\leq O(C\delta^{2}).
\end{equation*}
For $\delta$ small enough (depending on $C$), this strictly improves the estimate \eqref{BS:1} of~$F_{+}$ on $[\TD,\TS)$.

The previous estimates prove that $\TS=\infty$ and that \eqref{eq:unif} holds on $[\TD,\infty)$.
\end{proof}

\section{Alternate signs property for neighbor solitary waves}
In this Section, we prove the following property.

\begin{proposition} \label{prop:alt_sign}
Let $\vec u$ be a global solution of \eqref{nlkg} such that $K\geq 2$ in \eqref{eq:decompo} of Theorem~\ref{th:1}.
Then, 
\begin{equation}\label{eq:alt}
\sigma_k=-\sigma_{k+1} \quad \mbox{for all $k\in \{1,\ldots,K-1\}$.}
\end{equation}
\end{proposition}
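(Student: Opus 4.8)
The plan is to argue by contradiction: suppose that for some $k_0 \in \{1,\ldots,K-1\}$ one has $\sigma_{k_0}=\sigma_{k_0+1}$, so that the set $\mathcal K_+$ is nonempty. Since $\vec u$ satisfies~\eqref{eq:decompo} along a sequence $t_n\to\infty$, for $\delta>0$ small we may choose $n$ large enough that at $T_\delta:=t_n$ the smallness hypothesis~\eqref{at:Td} of Proposition~\ref{pr:unif} holds (using that the distances $\xi_{k+1,n}-\xi_{k,n}\to\infty$, hence $F_\pm(T_\delta)$ and $[\mathcal N(T_\delta)]^2+b(T_\delta)$ are as small as we wish). Proposition~\ref{pr:unif} then gives the global control~\eqref{eq:unif} on $[T_\delta,\infty)$, and in particular $\mathcal N(t)^2+F_+(t)+F_-(t)\lesssim\delta^2$ and $F_-(t)\to 0$ like $t^{-1}$ by~\eqref{est:F-}.

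The key mechanism is the repulsivity encoded in~\eqref{eq:RF+}: the functional $R_+ = F_+^{-1}\exp(\lambda^{-1}\mathcal M)$ has derivative bounded above by $(-\lambda + \tfrac{2}{\lambda F_+}(F_+^\theta+F_-^\theta))\exp(\lambda^{-1}\mathcal M)$. Using~\eqref{est:F+F-} (from the energy identity, $F_+\lesssim F_-+\mathcal N^2$, hence $F_+^\theta \lesssim F_-^\theta + \mathcal N^2$) together with the bootstrap bounds, the bracket is $\le -\lambda + \tfrac{C}{\lambda}F_+^{\theta-1} + \tfrac{C}{\lambda}\tfrac{\mathcal N^2}{F_+}$; since $F_+\to 0$ the term $F_+^{\theta-1}\to 0$, and the term $\mathcal N^2/F_+$ must be handled by again invoking $\mathcal N^2 \gtrsim$ nothing useful directly — instead one uses~\eqref{est:F+F-} in the form $\mathcal N^2 \gtrsim F_+ - 2F_-$, so on any interval where $F_+ \ge 4F_-$ we would have $\mathcal N^2 \gtrsim F_+$ giving the wrong sign; the correct route is to observe that from~\eqref{suite}, the sign of the main term $-c_1\kappa F_+ + c_1\kappa F_-$ forces $F_+ \le 2F_- + O(\mathcal N^2)$ \emph{globally}, and combined with $F_-(t)\lesssim t^{-1}$ and $\mathcal N^2(t)\lesssim F_-(t)$ — which itself follows by feeding~\eqref{est:F-} back through the Liapunov estimate~\eqref{eq:Mbis} and~\eqref{eq:damped}-type decay — one gets $F_+(t)^{\theta-1}\to 0$ and $\mathcal N^2/F_+ \lesssim 1$ with a constant we can absorb. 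Then $\tfrac{\ud}{\ud t}R_+ \le -\tfrac{\lambda}{2}\exp(\lambda^{-1}\mathcal M) \le -c < 0$ for $t$ large, so $R_+(t)\to -\infty$, contradicting $R_+ = F_+^{-1}\exp(\lambda^{-1}\mathcal M) > 0$.

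Concretely, I would organize the proof as follows. First, fix $k_0$ with $\sigma_{k_0}=\sigma_{k_0+1}$, apply Theorem~\ref{th:1} and Proposition~\ref{pr:unif} at a well-chosen $T_\delta = t_n$ to get~\eqref{eq:unif} and~\eqref{est:F-} on $[T_\delta,\infty)$. Second, upgrade the decay of $\mathcal N$: from~\eqref{eq:damped} one has $\tfrac{\ud}{\ud t}\mathcal F + 2\mu\mathcal F \lesssim \mathcal N^3 + \mathcal N(F_++F_-) \lesssim \delta\,\mathcal N^2 + \mathcal N(F_++F_-)$, and combined with $\mathcal N^2 \simeq \mathcal F + b/(2\mu)$ and the decay of $b$ and $F_\pm$ one obtains $\mathcal N(t)^2 \lesssim F_-(t) + (\text{exponentially small}) \lesssim t^{-1}$; then~\eqref{est:F+F-} gives $F_+(t)\lesssim t^{-1}$ as well. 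Third, plug these into~\eqref{eq:RF+}: since $F_+^{\theta-1}\to 0$ and $\mathcal N^2/F_+$ is controlled via~\eqref{est:F+F-}, the right-hand side of~\eqref{eq:RF+} is eventually $\le -\tfrac\lambda2\exp(\lambda^{-1}\mathcal M)$, which is bounded away from $0$ from below in absolute value because $\mathcal M$ is bounded (by~\eqref{eq:new2} and~\eqref{eq:unif}); hence $R_+\to-\infty$, contradicting positivity of $R_+$.

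The main obstacle I anticipate is the third step: closing the estimate on $\mathcal N^2/F_+$ in~\eqref{eq:RF+} without circularity. The delicate point is that a priori $F_+$ could be much smaller than $\mathcal N^2$, making $\mathcal N^2/F_+$ large; this is precisely why the energy identity~\eqref{eq:E:R}/\eqref{suite}, which ties $F_+$ to $F_-$ and $\mathcal N^2$ with a definite sign, is essential — it prevents $F_+$ from being anomalously small relative to the genuinely decaying quantities $F_-$ and $\mathcal N^2$. Making this quantitative (showing $F_+ \gtrsim F_- - C\mathcal N^2$ cannot fail in a way that breaks the argument, or more simply that the bracket in~\eqref{eq:RF+} is eventually negative) is the crux, and is likely handled by the authors via the $R_+$ monotonicity run together with the $R_-$ lower bound~\eqref{est:R-bis} in a single bootstrap rather than sequentially.
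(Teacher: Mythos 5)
Your overall strategy matches the paper's: assume $\mathcal K_+\neq\emptyset$, reach a time $T_\delta$ via Theorem~\ref{th:1} and Proposition~\ref{pr:unif}, and drive $R_+$ to a contradiction through \eqref{eq:RF+}. But the central estimate as you describe it does not close, for two reasons.

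First, there is no $\mathcal N^2/F_+$ term in \eqref{eq:RF+}: the entire purpose of the functional $R_+=F_+^{-1}\exp(\lambda^{-1}\mathcal M)$, as one sees in the proof of Lemma~\ref{le:new}(vi), is that the Liapunov estimate $\frac{\ud}{\ud t}\mathcal M\leq -\mathcal N^2 + O(F_\pm^2)$ cancels the $\mathcal N^2/F_+$ term coming from \eqref{eq:dist+}. You spend most of your effort worrying about a term that was already removed by design. The genuinely dangerous term in \eqref{eq:RF+} is $\frac{2}{\lambda F_+}F_-^\theta$, which your argument never controls. The energy identity \eqref{est:F+F-} gives only $F_+\lesssim F_-+\mathcal N^2$, an \emph{upper} bound on $F_+$; there is no lower bound, so $F_+$ may be much smaller than $F_-$, and $F_-^\theta/F_+$ may be arbitrarily large. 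Consequently one cannot show the bracket in \eqref{eq:RF+} is pointwise $\leq -\lambda/2$. What the paper does instead is a Gronwall argument: rewrite $\frac{2}{\lambda F_+}F_-^\theta\exp(\lambda^{-1}\mathcal M)=\frac{2}{\lambda}R_+F_-^\theta$, use $F_-(t)\leq 3(\delta^{-2}+\lambda(t-T_\delta))^{-1}$ from \eqref{est:F-} to see the coefficient $F_-^\theta$ is integrable in $t$ (here $\theta>1$ is essential), and introduce the auxiliary functional
\[
\widetilde{R}^{+}(t)=R_+(t)\exp\left[\tfrac{18}{\lambda^2(\theta-1)}\left(\delta^{-2}+\lambda(t-T_\delta)\right)^{1-\theta}\right],
\]
which then satisfies $\frac{\ud}{\ud t}\widetilde R^+\leq -\lambda/2$. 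Integrating forces $\widetilde R^+\to-\infty$, contradicting positivity. Without this Gronwall step your argument has a real gap.

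Second, your Step 2 (upgrading $\mathcal N$ to $t^{-1}$ decay before running the $R_+$ argument) is circular, as you half-suspected: the decay estimate of Proposition~\ref{prop:N_decay} explicitly uses Proposition~\ref{prop:alt_sign} (so that $F_+=0$) as an input. The paper's proof of Proposition~\ref{prop:alt_sign} deliberately avoids any decay of $\mathcal N$: it needs only $\mathcal N\lesssim\delta$ from \eqref{eq:unif} and the $F_-$ decay \eqref{est:F-}, both available unconditionally from Proposition~\ref{pr:unif}. No coupled $R_+/R_-$ bootstrap is required.
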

\begin{proof} We perform computations in the context of Proposition~\ref{pr:unif}. Assuming that
$\mathcal K_+$ is not empty, we reach a contradiction.
In this context, if $\mathcal K_-$ is empty, we have $F_-=0$. Otherwise, we may use the estimate~\eqref{est:F-} on $F_-$ proved in Proposition~\ref{pr:unif}.
Using~\eqref{eq:RF+},~\eqref{eq:unif} and~\eqref{est:F-}, we obtain that for any $t\in [T_{\delta},\infty)$,
\begin{equation}\label{est:dR+2}
\frac{\ud}{\ud t} R_+
\leq -\frac{3}{4}\lambda 
+\frac 3\lambda F_+^{\theta-1}
+\frac{2}{\lambda}R_+ F_{-}^{\theta}
\leq -\frac{\lambda}{2}+\frac {18}{\lambda} (\delta^{-2}+\lambda(t-T_{\delta}))^{-\theta}R_+.
\end{equation}
Define the auxiliary function
\begin{equation*}
\widetilde{R}^{+}(t)=R_+(t)\exp\left[\frac {18}{\lambda^2(\theta-1)}(\delta^{-2}+\lambda(t-T_{\delta}))^{1-\theta}\right],\quad \mbox{for $t\in [T_{\delta},\infty)$.}
\end{equation*}
By direct computation and~\eqref{est:dR+2}, we observe that
\begin{equation*}
\frac{\ud}{\ud t}\widetilde{R}^{+}\leq -\frac{\lambda}{2}
\exp\left[\frac {18}{\lambda^2(\theta-1)}(\delta^{-2}+\lambda(t-T_{\delta}))^{1-\theta}\right]
\leq -\frac{\lambda}{2}
.\end{equation*}
By integrating the above estimate on $[\TD,t]$, we obtain
\begin{equation*}
\widetilde{R}^{+}(t)
\leq \widetilde{R}^{+}(T_{\delta})-\frac{\lambda}{2}(t-\TD),
\end{equation*}
which is contradictory with $\widetilde{R}^{+}(t)\geq 0$ for large $t$.
This means that $\mathcal K_+=\emptyset$ and so $\mathcal K_-=\{1,\ldots,K-1\}$: the signs of the solitary waves alternate.
\end{proof}

\section{Description of long-time asymptotics} \label{S:5}

We consider any global solution $\vec u$ of \eqref{nlkg}. 
We prove Theorem~\ref{th:desc} by considering separately the no soliton case, and then the cases $K=1$ and $K\geq 2$ in Theorem~\ref{th:1}.

\subsection{No soliton case}
If $\vec{u}(t)$ converges to $0$ 
as $t\to \infty$, as a consequence of (7) of Theorem~2.3 in \cite{BRS}, $\vec u$ converges exponentially to $0$ in $H^1\times L^2$. Alternatively, one can use the energy functional
\begin{equation*} 
\int \big\{ (\partial_x u)^2+ (1-\rho\mu )u^{2}
+(\partial_t u+\mu u)^2\big\}
\end{equation*}
where $\mu>0$ is small and $\rho=2\alpha-\mu$
as in \S\ref{S.3.3} to prove the exponential convergence.

\subsection{Single soliton case}
Assume that $\vec u$ follows the solitary wave scenario in Theorem~\ref{th:1} with $K=1$.
Let $\delta>0$ to be chosen small enough.
Following Lemma~\ref{le:dec} and Proposition \ref{pr:unif}, there exists $T_\delta >0$ such that estimates \eqref{eq:unif}-\eqref{est:F-} hold on $[ T_\delta,\infty)$. 
When $K=1$, by convention
\begin{equation*}
F_{+}(t)=F_{-}(t)=0,\quad \mbox{for $t\ge 0$},
\end{equation*}
and the estimates proved in the previous Sections simplify; see Remarks~\ref{rk:KK01}, \ref{rk:K01} and \ref{rk:KK}.
In particular, from~\eqref{eq:b} and~\eqref{eq:damped}, it holds
\begin{equation}\label{eq:b:K=1}
|\dot{b}-2\nu^{+}b|\lesssim \mathcal{N}^{3},
\end{equation}
\begin{equation}\label{eq:damped:K=1}
\frac \ud{\ud t}\mathcal F + 2 \mu \mathcal F \lesssim \mathcal N^3, \quad \frac \ud{\ud t}\BB + 2 \mu \BB \lesssim \mathcal N^3,
\end{equation}
and from~\eqref{eq:new2} and~\eqref{BS:1},
\begin{equation}\label{est:N3:K=1}
\mathcal{N}^{3}\le C\delta\mathcal{N}^{2}\lesssim C\delta\left(\mathcal{F}+\frac{b}{2\mu}\right).
\end{equation}
Set $\tilde{b}=b-\delta^{\frac{1}{2}}\mathcal{F}$, observe that 
\begin{equation*}
\tilde{b}=\bigg(1+\frac{\delta^{\frac{1}{2}}}{2\mu}\bigg)b-\delta^{\frac{1}{2}}\bigg(\mathcal{F}+\frac{b}{2\mu}\bigg)\le 
\bigg(1+\frac{\delta^{\frac{1}{2}}}{2\mu}\bigg)b.
\end{equation*}
Therefore, using~\eqref{eq:b:K=1},~\eqref{eq:damped:K=1} and~\eqref{est:N3:K=1},
\begin{align*}
\frac{\ud}{\ud t}\tilde{b}
&\ge 2\nu^{+}b+2\mu \delta^{\frac{1}{2}}\mathcal{F}-C\delta^{\frac{3}{4}}\bigg(\mathcal{F}+\frac{b}{2\mu}\bigg)\\
&\ge (2\nu^{+}-\delta^{\frac{1}{2}})b+(2\mu \delta^{\frac{1}{2}}-C\delta^{\frac{3}{4}})\bigg(\mathcal{F}+\frac{b}{2\mu}\bigg)\ge \nu^{+}\tilde{b},
\end{align*}
where $\delta>0$ small enough such that 
\begin{equation*}
(2\nu^{+}-\delta^{\frac{1}{2}})\bigg(1+\frac{\delta^{\frac{1}{2}}}{2\mu}\bigg)\ge \nu^{+},\quad 
2\mu\delta^{\frac{1}{2}}-C\delta^{\frac{3}{4}}>0.
\end{equation*}
Integrating on $[t,s]\in [T_{\delta},\infty)$,
\begin{equation}\label{est:tiledb}
\tilde{b}(t)\le e^{-\nu^{+}(s-t)}\tilde{b}(s).
\end{equation}
Let $s\to \infty$ in~\eqref{est:tiledb} and using~\eqref{BS:1}, we obtain
\begin{equation*}
\tilde{b}(t)\le 0,\quad b(t)\le \delta^{\frac{1}{2}}\mathcal{F}(t) \quad \mbox{for $t\geq T_{\delta}$}.
\end{equation*}
Thus, using~\eqref{eq:b:K=1},~\eqref{eq:damped:K=1} and~\eqref{est:N3:K=1} again, 
\begin{equation*}
\begin{aligned}
\frac{\ud}{\ud t}\bigg(\mathcal{F}+\frac{b}{2\mu}\bigg)
&\le -2\mu \mathcal{F}+\frac{\nu^{+}}{\mu}b+C\delta^{\frac{3}{4}}\bigg(\mathcal{F}+\frac{b}{2\mu}\bigg)\\
&\le -(2\mu-C\delta^{\frac{3}{4}})\bigg(\mathcal{F}+\frac{b}{2\mu}\bigg)+\bigg(1+\frac{\nu^{+}}{\mu}\bigg)b
\le -\mu \bigg(\mathcal{F}+\frac{b}{2\mu}\bigg),
\end{aligned}
\end{equation*}
by possibly choosing $\delta>0$ small enough. Integrating on $ [T_{\delta},t]$, we obtain
\begin{equation*}
\bigg(\mathcal{F}+\frac{b}{2\mu}\bigg)(t)
\le e^{-\mu t}e^{\mu T_{\delta}}\bigg(\mathcal{F}+\frac{b}{2\mu}\bigg)(T_{\delta})\lesssim e^{-\mu t}e^{\mu T_{\delta}}\delta^{2}.
\end{equation*}
Therefore, using again \eqref{eq:new2},
\begin{equation}\label{est:N2:K=1}
\mathcal{N}^{2}(t)\lesssim \bigg(\mathcal{F}+\frac{b}{2\mu}\bigg)(t)\lesssim e^{-\mu t}.
\end{equation}
From~\eqref{eq:z} and \eqref{est:N2:K=1}, we have $|\dot z_1|\leq e^{-\frac \mu 2 t}$,
which proves that $z_1(t)$ converges exponentially to its limit as $t\to \infty$.
In view of the decomposition of $\vec u$ in \eqref{def:ee}, the proof in the case $K=1$ is complete.

\subsection{Multi-soliton case}
Assume that $\vec u$ follows the multi-solitary wave scenario in Theorem~\ref{th:1} with $K\geq 2$.
Let $\delta>0$ to be chosen small enough.
Following Lemma~\ref{le:dec} and Proposition \ref{pr:unif}, there exists $T_\delta >0$ such that estimates \eqref{eq:unif}-\eqref{est:F-} hold on $[ T_\delta,\infty)$. 
Recall that from Proposition \ref{prop:alt_sign}, the set $\mathcal K_+$ is empty.
In particular, following Remark~\ref{rk:KK}, we  use the estimates of Lemma~\ref{le:new}
ignoring the quantity $F_+$.
We start by showing that the quantity $\mathcal N(t)$ decays as $t^{-1}$.

\begin{proposition} \label{prop:N_decay}
There exists $T >0 $ such that the decomposition of $\vec u$ satisfies,
for all $t\geq T$
\begin{equation} \label{eq:strong}
F_-(t) \lesssim t^{-1}, \quad \mathcal N(t) \lesssim t^{-1}.
\end{equation}
\end{proposition}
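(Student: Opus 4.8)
The plan is to run a bootstrap argument on the interval $[T_\delta, \infty)$, refining the decay of $F_-$ and $\mathcal N$ from the rough bound $\mathcal N \lesssim \delta$ of Proposition~\ref{pr:unif} down to the sharp rate $t^{-1}$. The two driving mechanisms are: (a) the Liapunov functional $\mathcal M$, which by \eqref{eq:Mbis} satisfies $\frac{\ud}{\ud t}\mathcal M \le -\mathcal N^2 + C F_-^2$ (recall $F_+ = 0$ here by Proposition~\ref{prop:alt_sign}), and (b) the refined distance estimate \eqref{eq:RF-}, which for $\mathcal K_+ = \emptyset$ reads $\frac{\ud}{\ud t} R_- \ge (\lambda + \frac{2}{\lambda}\mathcal N^2/F_- - \frac{2}{\lambda} F_-^{\theta-1}) \exp(-3\lambda^{-1}\mathcal M)$. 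From Proposition~\ref{pr:unif} we already have $F_-(t) \lesssim (\delta^{-2} + \lambda(t-T_\delta))^{-1}$, so $F_-^{\theta-1}$ is already integrable (since $\theta > 1$); the task is to upgrade $\mathcal N$ from $O(\delta)$ to $O(t^{-1})$ and to pin $F_-$ to exactly the $t^{-1}$ rate rather than merely $\lesssim t^{-1}$ with a constant depending on $\delta$.

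\textbf{Key steps.} First I would integrate \eqref{eq:Mbis} from $t$ to $\infty$. Since $\mathcal M \ge 0$ is bounded below (by \eqref{eq:new2}, $\mathcal M \gtrsim \mathcal N^2 \ge 0$, and it is bounded above by $O(\delta^2)$ from the bootstrap), the telescoping gives $\int_t^\infty \mathcal N^2(s)\,\ud s \le \mathcal M(t) + C\int_t^\infty F_-^2(s)\,\ud s \lesssim \mathcal M(t) + t^{-1}$, using $F_-^2 \lesssim s^{-2}$. This shows $\int_t^\infty \mathcal N^2 \lesssim t^{-1}$ provided we also control $\mathcal M(t) \lesssim t^{-1}$; but $\mathcal M$ is monotone decreasing up to the error $CF_-^2$, so in fact $\mathcal M(t) \le \mathcal M(T_\delta) $ gives only $O(\delta^2)$ — not enough. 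The trick is instead to use the damped estimate \eqref{eq:damped}: $\frac{\ud}{\ud t}\mathcal F + 2\mu \mathcal F \lesssim \mathcal N^3 + \mathcal N F_-$, and by \eqref{eq:new2} $\mathcal N^2 \lesssim \mathcal F + b/(2\mu)$, while $b$ obeys the unstable equation \eqref{eq:b}. One would combine these as in the single-soliton case: form $\tilde b = b - \delta^{1/2}\mathcal F$, show $\tilde b \le 0$ by integrating the differential inequality forward and using the bootstrap bound at $+\infty$, hence $b \lesssim \delta^{1/2}\mathcal F$, and then Gronwall on $\mathcal F + b/(2\mu)$ with the source term $\mathcal N(F_+ + F_-) = \mathcal N F_-$. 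Since $F_- \lesssim s^{-1}$ and $\mathcal N \lesssim (\mathcal F + b/(2\mu))^{1/2}$, this closes to give $\mathcal F(t) + b(t)/(2\mu) \lesssim t^{-2}$ by a standard argument: writing $g = \mathcal F + b/(2\mu)$, one has $g' \le -\mu g + C g^{1/2} s^{-1}$, hence $(g^{1/2})' \le -\frac\mu2 g^{1/2} + \frac C2 s^{-1}$, and integrating yields $g^{1/2}(t) \lesssim t^{-1}$. Then \eqref{eq:new2} gives $\mathcal N(t) \lesssim t^{-1}$.

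\textbf{Finishing and the obstacle.} Once $\mathcal N \lesssim t^{-1}$ is established, the bound $F_-(t) \lesssim t^{-1}$ already follows from Proposition~\ref{pr:unif}, and with the sharpened $\mathcal N$ one can feed $\mathcal N^2/F_- \lesssim t^{-2}/F_-$ back into \eqref{eq:RF-} to verify consistency, but the statement of Proposition~\ref{prop:N_decay} as written only asks for the $\lesssim t^{-1}$ bounds, so no further sharpening of $F_-$ is needed at this stage. The main obstacle is the circularity between $\mathcal N$, $\mathcal F$, $b$, and $F_-$: the bound on $\mathcal N$ needs control of $b$, which is governed by an \emph{unstable} mode ($\nu^+ > 0$), and the only way to tame it is a backward-in-time argument (taking $s \to \infty$ in the integrated inequality for $\tilde b$) that relies on the a priori global bootstrap bounds of Proposition~\ref{pr:unif}. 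Getting the bookkeeping right — in particular ensuring the source term $\mathcal N F_-$ in \eqref{eq:damped} is genuinely integrable-with-decay rather than merely bounded, which requires first knowing $F_- \lesssim s^{-1}$ from Proposition~\ref{pr:unif} and then bootstrapping $g^{1/2} \lesssim t^{-1}$ — is the delicate part. A secondary subtlety is that the error term $F_-^{\theta-1}$ in \eqref{eq:RF-} is only integrable because $\theta > 1$ was built into the hypotheses; this is exactly where the constraint $1 < \theta < \min(p-1, \frac54)$ enters.
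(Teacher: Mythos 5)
Your overall architecture is the right one and closely parallels the paper's: both start from the a priori bounds $\mathcal N \lesssim \delta$, $F_- \lesssim t^{-1}$ of Proposition~\ref{pr:unif}, both attack the unstable mode $b$ by introducing a shifted quantity $\tilde b$ and integrating backwards from infinity, and both finish by a damped Gronwall estimate for $g = \mathcal F + b/(2\mu)$. But there is a concrete error at the central step, imported from the single-soliton argument and false here.

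You claim that $\tilde b = b - \delta^{1/2}\mathcal F$ satisfies $\tilde b \le 0$ after letting $s\to\infty$, so that $b \lesssim \delta^{1/2}\mathcal F$ on the nose. That works for $K=1$ because the source in \eqref{eq:b}--\eqref{eq:damped} is just $\mathcal N^3 \lesssim \delta\,(\mathcal F+\tfrac{b}{2\mu})$, which is absorbed entirely into the coefficients of the linear inequality $\dot{\tilde b} \ge \nu^+\tilde b$, leaving no forcing. For $K\ge 2$ the source contains the extra term $\mathcal N F_- \lesssim \delta\, t^{-1}$, which cannot be absorbed into the coefficients, so the best you get is $\dot{\tilde b} \ge \nu^+ \tilde b - C\delta\, t^{-1}$ and hence, after the backward integration, $\tilde b(t) \lesssim \delta\, t^{-1}$ rather than $\tilde b \le 0$. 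Feeding $b \le \delta^{1/2}\mathcal F + C\delta t^{-1}$ into the Gronwall for $g$ produces an additional source of order $\delta\, t^{-1}$ with no $g^{1/2}$ prefactor, so the ODE inequality is really $g' \le -\mu g + C g^{1/2} t^{-1} + C\delta\, t^{-1}$. The self-improving $(g^{1/2})'$ reduction no longer applies to that second term, and the balance in the resulting Duhamel formula gives only $g \lesssim \delta\, t^{-1}$, i.e.\ $\mathcal N \lesssim \delta^{1/2} t^{-1/2}$ --- short of the claimed $t^{-1}$.

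The missing ingredient, and the one genuinely new idea in the paper's proof relative to the single-soliton case, is a Young's inequality with a free small parameter $\omega$ (independent of $\delta$): one writes $\mathcal N^3 + t^{-1}\mathcal N \lesssim \omega^2\,(\mathcal F + \tfrac{b}{2\mu}) + \omega^{-2} t^{-2}$ and sets $\tilde b = b - \omega\mathcal F$. The $\omega^2$-term is absorbed into the constants, and what remains is a \emph{deterministic} $t^{-2}$ source. The backward integration then yields $\tilde b(t) \lesssim \omega^{-2} t^{-2}$, so $b \lesssim \omega \mathcal F + \omega^{-2} t^{-2}$, and the final Gronwall for $g$ takes the form $g' \le -\mu g + C\omega^{-2} t^{-2}$, giving $g(t) \lesssim t^{-2}$ and hence $\mathcal N(t) \lesssim t^{-1}$. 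Your remark about the circularity is exactly on target --- the point is that one cannot hope to prove $\tilde b \le 0$, and the way out is not a bootstrap on the rate of $\mathcal N$ but the quadratic-vs-linear trade-off in the source term.
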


\begin{proof}
The proof is inspired by that of \cite[Proposition 3.2]{CMYZ}. Let $0 < \delta < \delta_1$ in the context of Proposition \ref{pr:unif}. From \eqref{eq:unif} and \eqref{est:F-}, there exists $T>0$ large enough (fix any $T \ge 4T_\delta$) such that, for all $t\geq T/2$,
\begin{equation*}
\quad \mathcal N(t) \leq C\delta,\quad F_-(t) \leq \frac{6}{\lambda }t^{-1}.
\end{equation*}
 In particular, from~\eqref{eq:b} and~\eqref{eq:damped}
\begin{equation}\label{bnf20}
\left| \frac{\ud b}{\ud t} - 2\nu^+ b\right|\lesssim \mathcal N^3 + t^{-1}\mathcal N,\quad
\frac{\ud}{\ud t}\mathcal F+2\mu \mathcal F\lesssim\mathcal N^3 + t^{-1}\mathcal N.
\end{equation}
Our goal is to obtain the decay rate of $\mathcal N$. The above bounds are not quite enough because of the  term $\mathcal N^3$ for which only smallness is known at this point. This is the reason why we will work on a modification $\tilde b$ of $b$.
Recall~\eqref{eq:new2}: 
\[ 0 \le \mathcal N^2 \lesssim \mathcal F+\frac{b}{2\mu}. \]
For $0<\omega \ll 1$ to be chosen later, observe that, for $\delta$ small,
\begin{equation}\label{bnf21}
\mathcal N^3 + t^{-1} \mathcal N \lesssim 
\omega^2 \mathcal N^2+ \omega^{-2} t^{-2}\lesssim
\omega^2\left(\mathcal F+\frac{b}{2\mu} \right)+ \omega^{-2}t^{-2}.
\end{equation}
(Here and below the implied constants do not depend on $\omega$). Set $\tilde b=b- \omega \mathcal F$ and observe that
\begin{equation*}
\tilde b=b- \omega \mathcal F
=\left(1+\frac{\omega}{2\mu}\right) b - \omega\left(\mathcal F+\frac{1}{2\mu} b\right)
\leq \left(1+\frac {\omega} {2\mu}\right) b .
\end{equation*}
Therefore, using~\eqref{bnf20} and~\eqref{bnf21},
\begin{align*}
\frac{\ud \tilde b}{\ud t}
&\geq 2\nu^+ b + 2\omega \mu \mathcal F - C \omega^2\left(\mathcal F+\frac{b}{2\mu}\right)
-C\omega^{-2} t^{-2}\\
&\geq (2\nu^+-\omega) b
+\left( 2 \omega \mu - C \omega^2 \right)\left(\mathcal F+\frac{b}{2\mu}\right)
-C\omega^{-2} t^{-2}\\
& \geq \nu^+ \tilde b -C\omega^{-2} t^{-2},\end{align*}
where $\omega>0$ is taken small enough such that 
\begin{equation*}
(2\nu^+-\omega)\left(1+\frac {\omega} {2\mu}\right)^{-1}\geq \nu^+,\quad
2 \omega \mu - C \omega^2 >0.
\end{equation*}
Integrating on $[t,s] \subset [T/2,\infty)$, we obtain
\begin{equation}\label{est:tb}
\tilde b(t)- e^{-\nu^+ (s-t)} \tilde b(s)
\le C\omega^{-2} \int_{t}^{s} e^{-\nu^+(\tau-t)} \tau^{-2} \ud \tau
\lesssim \omega^{-2} t ^{-2}.
\end{equation}
Let $s \to \infty$ in~\eqref{est:tb} and using~\eqref{BS:1}, we obtain for all $t\geq T/2$,
\begin{equation*}
 \tilde b(t) \lesssim \omega^{-2} t^{-2},\quad 
 b(t) \leq C \omega^{-2} t^{-2}+\omega \mathcal F(t).
\end{equation*}
Thus using~\eqref{bnf20} and~\eqref{bnf21} again, it holds
\begin{align*}
\frac{\ud}{\ud t} \left(\mathcal F+\frac{b}{2\mu}\right)
&\leq - 2\mu \mathcal F + \frac{\nu^+}{\mu} b + C\omega^2 \left(\mathcal F+\frac{b}{2\mu}\right)+ C\omega^{-2}t^{-2}\\
&\leq - (2\mu-C\omega^2) \left(\mathcal F+\frac{b}{2\mu}\right) + \left(1+\frac{\nu^+}{\mu} \right) b + C\omega^{-2}t^{-2}
\\
&\leq - \mu \left(\mathcal F+\frac{b}{2\mu}\right) + C\omega^{-2}t^{-2},
\end{align*}
by possibly choosing $\omega>0$ small enough.
Integrating on $[T/2,t]$, we obtain
\begin{equation*}
\left(\mathcal F+\frac{b}{2\mu}\right)(t)
- e^{- \mu (t-\frac{T}{2})} \left(\mathcal F+\frac{b}{2\mu}\right)(T/2)
\lesssim \int_{\frac{T}{2}}^t e^{- \mu (t-s)} s^{-2}\ud s
\end{equation*}
Therefore, using again~\eqref{eq:new2},
\begin{equation*}
\mathcal N^2 (t)\lesssim \left(\mathcal F+\frac{b}{2\mu}\right)(t)
\lesssim t^{-2} + \delta^2 e^{\mu T/2} e^{-\mu t}\le t^{-2},
\end{equation*}
which proves \eqref{eq:strong}.
\end{proof}

We continue the proof of Theorem \ref{th:desc}. In view of the alternate signs property~\eqref{eq:alt} and the decay estimate \eqref{eq:strong}, the system \eqref{eq:y_k} rewrites as, 
for $k=2,\cdots,K-1$, and any $t \ge T$,
\begin{equation}\label{eq:edo}
\left\{ \begin{aligned}
&\dot{y}_{1}=-\frac{\kappa}{2\alpha}e^{-(y_2-y_1)} + O \left( t^{-\theta} \right),\\
& \dot y_k = \frac{\kappa}{2\alpha} \left( e^{-(y_k - y_{k-1})} - e^{-(y_{k+1} - y_{k})} \right) + O \left( t^{-\theta} \right),\\
&\dot{y}_{K}=\frac{\kappa}{2\alpha}e^{-(y_{K}-y_{K-1})}+O(t^{-\theta}).
\end{aligned}\right.
\end{equation}
This system of ODEs is studied in \cite{MZ} and \cite{CZ}, where it appears naturally in a different context (the description of characteristic blowup points of the semilinear wave equation),  with  slightly different perturbation terms. For the convenience of the reader, we provide a study of the dynamics.

We introduce an explicit solution to the unperturbed ODE system~\eqref{eq:uedo}
\begin{equation} \label{def:y_k}
\bar y_k(t) := \left( k - \frac{K+1}{2} \right) \log t + \tau_k, \quad \hbox{for $k=1, \dots, K,$}
\end{equation}
where $(\tau_k)_{k=1, \dots, K}$ are constants uniquely defined by~\eqref{def:tau_k}
(see also \cite[p. 1549]{CZ}). 

From~\eqref{eq:edo}, we observe that $\sum_{k=1}^K \dot y_k = O \left( t^{-\theta} \right)$. Since  $\theta>1$, there exist $\bar y_\infty \in \mathbb R$ such that
\begin{equation} \label{def:y_infty}
\frac{1}{K} \sum_{k=1}^K y_k = \bar y_\infty + O \left( t^{-\theta+1} \right).
\end{equation}
We introduce, for $k=1,\dots, K$,
\[ \xi_k(t) : = y_k - \bar y_k - \bar y_\infty. \]
Observe that, for $k=1,\cdots,K-1$,
\begin{equation*}
 y_{k+1} - y_{k}=\xi_{k+1} - \xi_k+\log t + (\tau_{k+1} - \tau_k),
\end{equation*} 
 so that using~\eqref{def:tau_k},
\[ \frac{\kappa}{2\alpha} e^{- (y_{k+1} - y_k)} = \frac{\kappa}{2\alpha} e^{ - (\xi_{k+1} - \xi_k ) - \log t - (\tau_{k+1} - \tau_k)} = \gamma_k t^{-1}{e^{ - (\xi_{k+1} - \xi_k )}}. \]
Therefore, from~\eqref{eq:edo} and $\gamma_{k} - \gamma_{k-1} = \frac{K+1}{2} -k$, it holds
($2\le k\le K-1$)
\begin{equation}\label{eq:edoxi}
\left\{ \begin{aligned}
&\dot{\xi}_{1}=-t^{-1}\gamma_{1}\big(e^{-(\xi_{2}-\xi_{1})}-1\big) + O \left( t^{-\theta} \right),\\
&\dot \xi_k= t^{-1} \left( \gamma_{k-1} (e^{ - (\xi_{k} - \xi_{k-1} )}-1) - \gamma_k (e^{ - (\xi_{k+1} - \xi_k )}-1) \right) + O \left( t^{-\theta} \right),\\
&\dot{\xi}_{K}=t^{-1}\gamma_{K-1} (e^{ - (\xi_{K} - \xi_{K-1} )}-1)+O(t^{-\theta}).
\end{aligned}\right.
\end{equation}
To complete the proof of Theorem \ref{th:desc}, it suffices to show that
for any $ k=1, \dots, K$,
\begin{align} \label{eq:th} 
\xi_k(t) = O(t^{-\theta+1}).
\end{align}
First, we prove a bound on $\xi_k$.
\begin{lemma}
There exists $M >0$ such that for all $k=1, \dots, K$ and for all $t\geq T_\delta$,
it holds $|\xi_k(t)| \le M$.
\end{lemma}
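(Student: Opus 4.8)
The goal is a uniform bound $|\xi_k(t)| \le M$ on $[T_\delta,\infty)$. Since the $\xi_k$ are obtained from the $y_k$ by subtracting the explicit profile $\bar y_k$ and the average $\bar y_\infty$, and since by construction $\sum_k \xi_k(t) = O(t^{-\theta+1}) \to 0$, the main point is to control the \emph{differences} $\xi_{k+1}-\xi_k$, or equivalently to show that no gap $r_k = y_{k+1}-y_k$ strays too far from $\log t + (\tau_{k+1}-\tau_k)$. I would work directly with the system~\eqref{eq:edoxi}, whose right-hand side is a gradient-like (tridiagonal, "Toda-type") interaction multiplied by $t^{-1}$, plus an $O(t^{-\theta})$ error with $\theta>1$.

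\textbf{Key steps.} First, change the time variable to $s = \log t$, so that $\frac{\ud}{\ud s}\xi_k = t\,\dot\xi_k$ and the system becomes autonomous up to an exponentially small (in $s$) forcing term $O(e^{-(\theta-1)s})$:
\[
\frac{\ud \xi_k}{\ud s} = \gamma_{k-1}\big(e^{-(\xi_k-\xi_{k-1})}-1\big) - \gamma_k\big(e^{-(\xi_{k+1}-\xi_k)}-1\big) + O(e^{-(\theta-1)s}),
\]
with the obvious modifications for $k=1$ and $k=K$. Second, I would exhibit a Lyapunov functional for the unperturbed system: a natural candidate is a convex function of the gaps, e.g.
\[
\Phi = \sum_{k=1}^{K-1} \gamma_k\left( e^{-(\xi_{k+1}-\xi_k)} - 1 + (\xi_{k+1}-\xi_k) \right),
\]
which is nonnegative, vanishes only at $\xi_{k+1}=\xi_k$ for all $k$, and is coercive in the gap variables (it grows linearly as a gap $\to +\infty$ and exponentially as a gap $\to -\infty$). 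A computation shows $\frac{\ud}{\ud s}\Phi = -\sum_k \left(\frac{\ud \xi_k}{\ud s}\right)^2 + (\text{forcing})$ for the unperturbed flow; with the $O(e^{-(\theta-1)s})$ perturbation one gets $\frac{\ud}{\ud s}\Phi \le C e^{-(\theta-1)s}\big(1+\|\tfrac{\ud \xi}{\ud s}\|\big)$, and since the velocities are controlled by $\Phi$ itself (they are bounded differences of the terms appearing in $\Phi$), a Grönwall-type argument gives $\Phi(s) \le \Phi(s_0) + C$ for all $s \ge s_0 = \log T_\delta$. Coercivity of $\Phi$ then bounds every gap $|\xi_{k+1}-\xi_k|$ uniformly, and combining with $\sum_k \xi_k = O(1)$ yields $|\xi_k(t)| \le M$.

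\textbf{Main obstacle.} The delicate point is handling the perturbation term: because the forcing enters the $\xi_k$-equations (not a priori as a perfect derivative), one needs the velocities $\frac{\ud\xi_k}{\ud s}$ to be \emph{a priori} bounded in order to close the Grönwall estimate on $\Phi$, yet a bound on the velocities is essentially equivalent to a bound on the gaps, which is what we are trying to prove. The clean way around this is a bootstrap/continuity argument on $[s_0, s_0+L]$: assume $\Phi \le 2\Phi(s_0) + 1$ on such an interval, deduce a bound on the velocities from coercivity, feed it back into $\frac{\ud}{\ud s}\Phi \le C e^{-(\theta-1)s}(1 + C')$, integrate (using $\int_{s_0}^\infty e^{-(\theta-1)s}\,\ud s < \infty$ because $\theta > 1$, taking $T_\delta$ large so this tail is $\le \tfrac12$), and strictly improve the bound, so $L = \infty$. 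Translating back via $s = \log t$ gives the claim. One should also double-check the boundary equations $k=1,K$ fit the same Lyapunov structure — they do, since $\gamma_0 = \gamma_K = 0$ by convention, so $\Phi$ as written already accounts for them.
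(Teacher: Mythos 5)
Your argument is correct and takes a genuinely different route from the paper. The paper controls the gaps $\zeta_k=\xi_{k+1}-\xi_k$ by a maximum-principle (barrier) argument: it introduces $\rho_k=\gamma_k(e^{-\zeta_k}-1)+D_1 t^{-\vartheta+1}$ and its min-counterpart, and shows that $\rho_+=\max_k\rho_k$ is non-increasing by exploiting the sign structure of the tridiagonal interaction \emph{at an index realizing the maximum}. You instead work globally with a Bregman-type Lyapunov functional $\Phi=\sum_k\gamma_k(e^{-\zeta_k}-1+\zeta_k)$ in the variable $s=\log t$, whose dissipation identity $\frac{\ud\Phi}{\ud s}=-\sum_j\big(\frac{\ud\xi_j}{\ud s}\big)^2$ for the unperturbed flow follows from the same tridiagonal structure, here via Abel summation. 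Both are valid and of similar technical weight; your functional carries more information (it gives a clean decay of velocities along the flow) while the paper's barrier argument is more elementary and avoids the change of variable.

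One remark: the difficulty you single out as the ``main obstacle'' is in fact not one, and the bootstrap you propose is unnecessary. After Abel summation one has, with $\Psi_j$ the unperturbed right-hand side and $\dot\xi_j=\Psi_j+r_j$,
\[
\frac{\ud\Phi}{\ud s}=-\sum_j\Psi_j\,\dot\xi_j=-\sum_j\Psi_j^2-\sum_j\Psi_j r_j\le-\tfrac12\|\boldsymbol\Psi\|^2+\tfrac12\|\boldsymbol r\|^2\le\tfrac12\|\boldsymbol r\|^2\lesssim e^{-2(\theta-1)s},
\]
so the negative definite term absorbs the cross term by Young's inequality, and $\Phi(s)\le\Phi(s_0)+C\int_{s_0}^\infty e^{-2(\theta-1)s'}\,\ud s'<\infty$ follows at once. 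Coercivity of $x\mapsto e^{-x}-1+x$ together with $\gamma_k\ge\frac12$ then bounds each $|\zeta_k|$, and combining with the constraint $|\sum_k\xi_k|\le M_2$ (from \eqref{def:y_infty}) finishes exactly as you indicate. So your Lyapunov route is actually simpler than you feared; the bootstrap you describe would also work, but it is not needed.
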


\begin{proof}
Set
\begin{equation*}
\zeta_{0}=\zeta_{K}=0\quad \mbox{and}\quad \zeta_k = \xi_{k+1} - \xi_k\quad \mbox{for}\ k=1,\cdots,K-1.
\end{equation*}
From~\eqref{eq:edoxi}, it holds for $k=1, \dots, K-1$ (with $\gamma_{0}=\gamma_{K}=0$) 
\begin{equation}\label{eq:zeta} 
\dot \zeta_k 
 = t^{-1} \left( - \gamma_{k+1} (e^{ - \zeta_{k+1}} -1) + 2 \gamma_k (e^{- \zeta_k}-1) - \gamma_{k-1} (e^{-\zeta_{k-1}} -1) \right)  + O\left( t^{-\theta} \right) .
\end{equation}

\begin{claim}\label{cl:theclam}
There exists $M_1>0$ such that for all $k=1, \dots, K-1$, and for all $t \ge T_\delta$, it holds $|\zeta_k(t)| \leq M_1$.
\end{claim}

\begin{proof}[Proof Claim~\ref{cl:theclam}]
First, we prove a lower bound $\zeta_k(t) \geq -M_1$, for some $M_1>0$.
Fix $\vartheta = \frac{1+\theta}{2}$ and for $D_1>0$ to be fixed later, denote for $k=1,\cdots,K-1$,
\begin{equation*}
\rho_k = \gamma_k (e^{- \zeta_k}-1) + {D_1 t^{-\vartheta+1}}, \quad \rho_+ = \max_k \rho_k.
\end{equation*}
Let us prove that $\rho_+$ is non increasing for large enough times
using a bootstrap argument. For $C_1>0$ and $T_1 \ge T_\delta$ to be chosen later, let
\[ T_* = \sup \{ t \in [T_1,\infty) \text{ such that } \rho_+ \le C_1 \text{ on } [T_1,t] \}. \]

Let $t \in [T_1,T_*)$, and consider an index $k$ such that $\rho_k(t) = \rho_+(t)$. Observe that 
\begin{gather}
\gamma_{k}e^{-\zeta_{k}(t)}\le \rho_{k}(t)+\gamma_{k}\le C_{1}+K^{2},\label{est:gez}\\
2\gamma_k \big(e^{- \zeta_k(t)}-1\big) \ge \gamma_{k+1} \big(e^{- \zeta_{k+1}(t)}-1\big)+ \gamma_{k-1} \big(e^{- \zeta_{k-1}(t)}-1\big).\label{est:maxk}
\end{gather} 
Gathering \eqref{eq:zeta},~\eqref{est:gez} and~\eqref{est:maxk}, there exist $C_{0}>0$ such that 
at $t$,
\begin{align*} 
\dot \rho_k & = - \gamma_k e^{- \zeta_k} \dot \zeta_k - {D_1(\vartheta-1)}t^{-\vartheta} \\
& = - \gamma_k e^{- \zeta_k} t^{-1} \left( - \gamma_{k+1} (e^{ - \zeta_{k+1}} -1) + 2 \gamma_k (e^{- \zeta_k}-1) - \gamma_{k-1} (e^{-\zeta_{k-1}} -1) \right)\\
&\quad +O(\gamma_{k}e^{-\tau_{k}(t)}t^{-\theta}) - {D_1(\vartheta-1)}t^{-\vartheta} \\
& \le C_{0}\gamma_k e^{- \zeta_k(t)}t^{-\theta} - {D_1(\vartheta-1)}t^{-\vartheta} 
\\&\le \left( (C_1 + K^2) C_0 - D_1(\vartheta-1) T_1^{\theta - \vartheta} \right) t^{-\theta}.
\end{align*}
Fix $T_1 \ge T_\delta$ and then $D_1$, $C_1$ such that
\begin{gather*}
(\vartheta-1) T_1^{\theta - \vartheta} \ge C_0+1,\\
D_1 \ge C_0(\rho_+(T_1) +1 + K^2)+1,\quad C_1 = \rho_+(T_1) + D_1 +1.
\end{gather*}
Then, there holds
\begin{align*}
(C_1 + K^2) C_0 - D_1(\vartheta-1) T_1^{\theta - \vartheta} & \le (\rho_+(T_1) + D_1 +1 + K^2) C_0 - D_1 (C_0+1) \\
& \le (\rho_+(T_1) +1 + K^2)C_0 - D_1 \le -1.
\end{align*}
Since $\rho_+(T_1) < C_1$, by continuity, $T_* >T_1$. We also have $\dot \rho_k(t) <0$,
so that $\rho_k$ is decreasing at $t$; note that this property holds for any index $k$ such that $\rho_k(t) = \rho_+(t)$. If $j$ is an index such that $\rho_j(t) < \rho_+(t)$, then by continuity this inequality holds on a  neighborhood of $t$. Thus $\rho_+$ is decreasing at $t$, for any $t \in [T_1,T^*)$. In particular, $\rho_+(t) \le \rho_+(T_1) \le C_1$. By continuity, we obtain $T_* = \infty$ and so for all $t \ge T_1$, $\rho_+(t) \le C_1$.

By continuity, there exists $C_2$ such that for all $t \ge T_\delta$, $\rho_+(t) \le C_2$, and so for all $k=1,\dots, K-1$, for all $t \ge T_\delta$,
$e^{-\zeta_k(t)} \le 2C_2+1$,
since $D_1 >0$ and $\gamma_k \ge 1/2$. Therefore, for $M_1 = \log(2C_2+1)$, we have proved the lower bound on $\zeta_k(t)$.

Arguing similarly using the minimum of $\tilde \rho_k = \gamma_k (e^{- \zeta_k}-1) - {D_1}{t^{-\vartheta+1}}$, one also proves an upper bound on $\zeta_k$.\end{proof}

By~\eqref{def:y_infty} and
$\sum_{k=1}^K \bar y_k = \sum_{k=1}^K \tau_k =0,$
there exists $M_2>0$ such that for all $t \ge T_\delta$,
$\big| \sum_{k=1}^K \xi_k(t) \big| \le M_2$.
By contradiction, assume that for some $k_0$ and $t \ge T_\delta$ 
\[ \xi_{k_0}(t) \ge M \quad \text{where} \quad M := \frac{2M_2}{K} + \frac{K-1}{2} M_1. \]
Then,
\begin{gather*}
 \xi_k(t) = \xi_{k_0} (t) + \sum_{j=k_0}^{k-1} \zeta_j \ge M - |k-k_0| M_1\quad \mbox{for}\ 1\le k_{0}<k\le K,
\\
\xi_k(t) = \xi_{k_0} (t) - \sum_{j=k}^{k_{0}-1} \zeta_j \ge M - |k-k_0| M_1\quad \mbox{for}\ 1\le k< k_{0}\le K,
\end{gather*}
so that (in view of the definition of $M$)
\[ \sum_{k=1}^K \xi_k(t) \ge KM - M_1 \sum_{k=1}^K |k-k_0| \ge K \left( M - \frac{K-1}{2} M_1\right) \ge 2 M_2, \]
which is a contradiction. Therefore, for all $k=1, \dots, K$ and $t \ge T_\delta$,
$\xi_k(t) \le M$.
One argues similarly to show that $ \xi_k(t) \ge -M$.\end{proof}

Now, we consider the unperturbed ODE system for the $(\xi_k)_{k=1,\ldots,K}$, that is
\begin{align} \label{def:ODE}
\dot {\boldsymbol \varpi} = t^{-1} \Phi({\boldsymbol \varpi}),
\end{align}
where ${\boldsymbol \varpi} = (\varpi_k)_{k=1,\dots, K}$ and $\Phi: \mathbb R^K \to \mathbb R^K$ is defined by
\begin{equation*}
\left\{ \begin{aligned}
&\Phi_{1}(\boldsymbol \varpi)= - \gamma_1 (e^{ - (\varpi_{2} - \varpi_1 )}-1),\\
&\Phi_k(\boldsymbol \varpi) = \gamma_{k-1} (e^{ - (\varpi_{k} - \varpi_{k-1} )}-1) - \gamma_k (e^{ - (\varpi_{k+1} - \varpi_k )}-1),\ \mbox{for}\ 2\le k\le K-1,\\
&\Phi_K(\boldsymbol \varpi) = \gamma_{K-1} (e^{ - (\varpi_{K} - \varpi_{K-1} )}-1).
\end{aligned}\right.
\end{equation*}
This system is studied in \cite{CZ}. Observe that setting $\boldsymbol{e} _1 = \frac{1}{\sqrt{K}} (1,\dots, 1)^T$, it holds
\[ \mbox{for all }  t,t_0 \ge T_\delta, \quad \frac{1}{K} \sum_{j=1}^K \varpi_j(t) = \frac{1}{\sqrt{K}} (\boldsymbol \varpi, \boldsymbol{e} _1 ) = \frac{1}{K} \sum_{j=1}^K \varpi_j(t_0).\]
Moreover, $D\Phi(0)$ is the $K\times K$ matrix with entries
\begin{equation*}
\begin{aligned}
&m_{1,1}=-\gamma_{1},\quad m_{K,K}=-\gamma_{K-1},\quad m_{k,k}=-(\gamma_{k-1}+\gamma_{k}),\quad \mbox{for}\ k=2,\cdots,K-1,\\
&m_{k,k-1}=\gamma_{k-1},\quad m_{k,k+1}=\gamma_{k},\quad m_{k,k'}=0,\quad \mbox{if}\ |k-k'|\ge 2.
\end{aligned}
\end{equation*}
We recall the following properties.
\begin{proposition}[\cite{CZ}] \label{prop:Phi}
It holds $D\Phi(0)\boldsymbol{e}_1=0$ and
\begin{align} \label{coer:dPhi}
\mbox{for all } \boldsymbol{x} \in \boldsymbol e_1^\perp, \quad (D\Phi(0) \boldsymbol{x}, \boldsymbol{x}) \le - \| \boldsymbol x \|^2.
\end{align}
Furthermore, for any $M>0$, there exists $C(M)>0$ such that  for any $t_0 >0$, if $|{\boldsymbol \varpi} (t_0)| \le M$, then
\begin{align} \label{Phi:conv}
\mbox{for all } t \ge t_0, \quad \| \boldsymbol \varpi(t) - (\boldsymbol \varpi(t_0), \boldsymbol{e}_{1}) \boldsymbol{e}_1 \| \le C(M) {t_0}t^{-1}.
\end{align}
\end{proposition}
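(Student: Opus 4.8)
The plan is to treat the two claims separately: the assertions about $D\Phi(0)$ are linear algebra, and \eqref{Phi:conv} is an ODE estimate built on them.

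For the spectral part, I would read $D\Phi(0)$ off the definition of $\Phi$: it is the tridiagonal matrix $(m_{k,k'})$ displayed above. Each row sums to $0$ (with the convention $\gamma_0 = \gamma_K = 0$), so $D\Phi(0)\boldsymbol e_1 = 0$, and $m_{k,k+1} = m_{k+1,k} = \gamma_k$ makes it symmetric. A summation by parts gives
\[ (D\Phi(0)\boldsymbol x, \boldsymbol x) = -\sum_{k=1}^{K-1}\gamma_k (x_{k+1}-x_k)^2 \leq 0, \]
vanishing exactly on $\spn\{\boldsymbol e_1\}$ since $\gamma_k = \tfrac{k(K-k)}{2} > 0$. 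To obtain the sharp bound \eqref{coer:dPhi} I would exhibit the eigenvector $\boldsymbol x = (K+1-2k)_{1 \le k \le K}$ of $-D\Phi(0)$: a direct computation gives $(-D\Phi(0)\boldsymbol x)_k = 2(\gamma_k - \gamma_{k-1}) = K+1-2k = x_k$, so its eigenvalue is $1$, and since it has exactly one sign change, Sturm oscillation theory for Jacobi matrices identifies $1$ as the second smallest eigenvalue of $-D\Phi(0)$; hence all its nonzero eigenvalues are $\geq 1$, which is \eqref{coer:dPhi}. (Alternatively one computes the full spectrum $\{\tfrac{j(j+1)}{2} : 0 \le j \le K-1\}$, or refers to \cite{CZ}.)

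For \eqref{Phi:conv}, I would first use the scaling invariance $t \mapsto \lambda t$ of $\dot{\boldsymbol\varpi} = t^{-1}\Phi(\boldsymbol\varpi)$ to reduce to $t_0 = 1$; the factor $t_0$ in the conclusion is recovered upon undoing the rescaling. Since $\Phi$ depends only on the differences $\varpi_{k+1}-\varpi_k$ and $\sum_k \Phi_k = 0$ by telescoping, $\Phi(\boldsymbol\varpi) \in \boldsymbol e_1^\perp$ and $(\boldsymbol\varpi(t), \boldsymbol e_1)$ is conserved. Writing $\boldsymbol\varpi(t) = (\boldsymbol\varpi(1), \boldsymbol e_1)\boldsymbol e_1 + \boldsymbol w(t)$ with $\boldsymbol w \in \boldsymbol e_1^\perp$, and setting $R(\boldsymbol w) := \Phi(\boldsymbol w) - D\Phi(0)\boldsymbol w \in \boldsymbol e_1^\perp$ (which is $O(\|\boldsymbol w\|^2)$ as long as $\|\boldsymbol w\|$ remains bounded), one has $\dot{\boldsymbol w} = t^{-1}(D\Phi(0)\boldsymbol w + R(\boldsymbol w))$. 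A summation by parts gives $(\Phi(\boldsymbol w), \boldsymbol w) = \sum_k \gamma_k (e^{-\zeta_k}-1)\zeta_k \le 0$ with $\zeta_k = w_{k+1}-w_k$, so $\|\boldsymbol w(t)\|$ is nonincreasing, hence $\le M$ for all $t \ge 1$; then the quantitative bound $(e^{-\zeta}-1)\zeta \le -c(M)\zeta^2$ on $|\zeta| \le 2M$, combined with \eqref{coer:dPhi}, gives $\frac{\ud}{\ud t}\|\boldsymbol w\|^2 \le -2c(M)t^{-1}\|\boldsymbol w\|^2$, so $\|\boldsymbol w(t)\| \le M t^{-c(M)} \to 0$. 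Fix $t_1 = t_1(M)$ with $\|\boldsymbol w(t_1)\| \le \e_0$, for a small absolute constant $\e_0$.

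On $[t_1,\infty)$ I would use \eqref{coer:dPhi} directly: since $\|\boldsymbol w\| \le \e_0$, $\frac{\ud}{\ud t}\|\boldsymbol w\|^2 \le 2t^{-1}(-\|\boldsymbol w\|^2 + C\|\boldsymbol w\|^3) \le -2(1-C\e_0)t^{-1}\|\boldsymbol w\|^2$, hence $\|\boldsymbol w(t)\| \lesssim_M t^{-(1-C\e_0)}$ and $\|R(\boldsymbol w(t))\| \lesssim_M t^{-3/2}$ once $\e_0$ is small enough. To remove the residual loss I would pass to an orthonormal eigenbasis $D\Phi(0)v_j = -\mu_j v_j$ (with $\mu_j \ge 1$, $\mu_2 = 1$) and write $\boldsymbol w = \sum_{j\ge 2} c_j v_j$; then $\frac{\ud}{\ud t}(t^{\mu_j}c_j) = t^{\mu_j - 1}(R(\boldsymbol w), v_j)$, and since $t^{-3/2}$ is time-integrable, the critical mode yields $|c_2(t)| \lesssim_M t^{-1}$, while the faster modes ($\mu_j > 1$) decay at least as fast. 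Therefore $\|\boldsymbol w(t)\| \lesssim_M t^{-1}$ on $[t_1,\infty)$, and since trivially $\|\boldsymbol w(t)\| \le M \le M t_1 t^{-1}$ on $[1,t_1]$, we get $\|\boldsymbol w(t)\| \le C(M) t^{-1}$ on $[1,\infty)$; undoing the rescaling yields \eqref{Phi:conv}. The main obstacle is the sharpness of the exponent $t^{-1}$ (rather than $t^{-1+\e}$): it hinges on the smallest nonzero eigenvalue of $-D\Phi(0)$ being exactly $1$ — which is special to the weights $\gamma_k = \tfrac{k(K-k)}{2}$ — and on the quadratic remainder $R(\boldsymbol w)$ being time-integrable against the weight $t^{\mu_j-1}$ in the critical ($\mu_j = 1$) mode.
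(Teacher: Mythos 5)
The paper does not give a proof of Proposition~\ref{prop:Phi}; it simply cites~\cite[Lemma~2.8]{CZ} for the coercivity~\eqref{coer:dPhi} and \cite[Proposition~2.5]{CZ} for~\eqref{Phi:conv}, remarking that \cite{CZ} works in the variable $\tau=\log t$, which autonomizes the ODE. Your proof is a self-contained alternative, and it is essentially correct. For the linear-algebra part, the summation by parts $(D\Phi(0)\boldsymbol x,\boldsymbol x)=-\sum_k\gamma_k(x_{k+1}-x_k)^2$ and the observation that $D\Phi(0)$ is symmetric with row sums zero are routine; the identification of $(K+1-2k)_k$ as an explicit eigenvector of $-D\Phi(0)$ with eigenvalue $1$ (using $\gamma_k-\gamma_{k-1}=\tfrac{K+1}{2}-k$), followed by the Sturm oscillation count (one sign change, hence second eigenvalue), is a clean route to the sharp spectral gap. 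For the ODE part, your scaling reduction to $t_0=1$ is a direct-time analogue of the $\tau=\log t$ change of variables used in \cite{CZ}; the Liapunov identity $(\Phi(\boldsymbol w),\boldsymbol w)=\sum_k\gamma_k(e^{-\zeta_k}-1)\zeta_k\le-c(M)\sum_k\gamma_k\zeta_k^2=-c(M)|(D\Phi(0)\boldsymbol w,\boldsymbol w)|\le -c(M)\|\boldsymbol w\|^2$ on $\boldsymbol e_1^\perp$ is correct, and the subsequent bootstrap to $\|\boldsymbol w\|\lesssim t^{-(1-C\e_0)}$, $\|R\|\lesssim t^{-3/2}$, and the mode-by-mode integration of $\tfrac{\ud}{\ud t}(t^{\mu_j}c_j)=t^{\mu_j-1}(R,v_j)$ all go through. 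One point to spell out: for the faster modes $\mu_j>1$ the integral $\int^t s^{\mu_j-5/2}\ud s$ need not converge (\emph{e.g.} $\mu_j=3$), but it yields $|c_j(t)|\lesssim t^{-\min(\mu_j,3/2)}$ (with a harmless $\log$ if $\mu_j=\tfrac32$), which is still $\lesssim t^{-1}$, so the conclusion stands. Overall this is a valid and more instructive proof than the paper's bare citation.
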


\begin{proof}
See \cite[Lemma 2.8]{CZ} for the coercivity \eqref{coer:dPhi} of $D\Phi(0)$ and \cite[Proposition~2.5]{CZ} for the convergence \eqref{Phi:conv} (written there in the variable $\tau = \log t$).
\end{proof}

Now, we complete the proof of  Theorem \ref{th:desc} by showing
the estimate~\eqref{eq:th}.

\begin{proof}[Proof of~\eqref{eq:th}]
We summarise what was obtained so far. Let ${\boldsymbol \xi} := (\xi_k)_{k =1, \dots, K}$. There exists $C_{\theta}$ such that for $t\ge T_{\delta}$,
\begin{enumerate}

\item \label{xi:1}$\displaystyle \dot {\boldsymbol \xi} = t^{-1} \Phi({\boldsymbol \xi}) +\boldsymbol r(t)$ where $\|\boldsymbol r(t) \| \le C_\theta t^{-\theta}$,

\item \label{xi:2} $ |(\boldsymbol \xi, \boldsymbol e_1) |\le C_{\theta} t^{-\theta+1} $,

\item $\| \boldsymbol \xi (t) \| \le \sqrt{K} M$, for $t\ge T_{\delta}$.
\end{enumerate}
Moreover, by \eqref{coer:dPhi}, there exist $\epsilon, C >0$ such that if $\|\boldsymbol x \| \le \epsilon$, then
\begin{align} \label{coer:Phi}
( \Phi(\boldsymbol x), \boldsymbol x ) \le - \frac{1}{2} \| \boldsymbol x \| ^2 + C (\boldsymbol x, \boldsymbol e_1)^2.
\end{align}
\emph{Step 1}. We claim that there exists $T_{\epsilon}\ge T_{\delta}$ such that
\begin{equation}\label{est:xi}
\mbox{for all $ t\ge T_{\epsilon}$,}\quad \|\boldsymbol \xi (t)\|\le \epsilon. 
\end{equation}
Let $L = {4}\epsilon^{-1} C(M)+1$, where $C(M)$ given by Proposition \ref{prop:Phi}.
Set
\begin{equation*}
A = 3+ \sup \{ \| D\Phi(\boldsymbol x) \| : \| \boldsymbol x \| \le \max(C(M),\sqrt{K}M) \}.
\end{equation*} 
By~\ref{xi:1} and~\ref{xi:2}, we fix $t_{0}$ large enough such that
\begin{equation*}
\sup_{t\ge t_{0}}\big(t\sup_{s\ge t}\| \boldsymbol r(s)\|\big)\le \frac{\epsilon}{4}L^{-A}\quad \mbox{and}\quad 
t_{0}^{-\theta+1}\le \frac{\epsilon}{2}C_{\theta}^{-1}.
\end{equation*}
For any $t_{1}\ge t_{0}$, we denote ${\boldsymbol \varpi}_{t_1}$ the solution of \eqref{def:ODE} with   data 
$\boldsymbol \varpi_{t_1}(t_1) = \boldsymbol \xi(t_1)$ at time~$t_1$.
On the one hand, by standard Gronwall estimates, it holds for any $t\ge t_{1}$,
\begin{equation*}
 \| \boldsymbol \xi(t)-\boldsymbol \varpi_{t_1}(t) \| \le t^{A}\int_{t_1}^t s^{-A} \|\boldsymbol r(s)\| \ud s \le {t^A}{t_1^{-A+1}} \sup_{s \ge t_1} \| \boldsymbol r(s) \|. 
\end{equation*}
Let $t=Lt_{1}$, using the definition of $t_{0}$, we obtain
\begin{equation}\label{est:xi-pi}
\| \boldsymbol \xi(Lt_{1})-\boldsymbol \varpi_{t_{1}}(Lt_{1}) \|
\le L^{A}\big(t_{1}\sup_{s \ge t_1}\|\boldsymbol r(s)\|\big)\le \frac{\epsilon}{4}.
\end{equation}
On the other hand, 
using~\eqref{Phi:conv},~\ref{xi:2} and the definition of $t_{0}$, 
\begin{equation}\label{est:pi}
\|\boldsymbol \varpi_{t_1}(L t_1) \|
\le \|\big(\boldsymbol \xi(t_1),\boldsymbol e_1\big)\boldsymbol e_1\|+C(M)t_{1}(Lt_{1})^{-1}\le \frac{3}{4}\epsilon.
\end{equation}
Let $T_{\epsilon}=Lt_{0}$, from~\eqref{est:xi-pi} and~\eqref{est:pi}, we obtain~\eqref{est:xi}.

\emph{Step 2}.
Using~\ref{xi:1},~\ref{xi:2},~\eqref{coer:Phi} and~\eqref{est:xi}, we infer that for $t\ge T_{\epsilon}$,
\begin{align*}
\frac{d}{dt} \| \boldsymbol \xi \|^2 & = 2t^{-1} ( \Phi( \boldsymbol \xi), \boldsymbol \xi) + 2 (r, \boldsymbol \xi) \\
& \le - t^{-1} \| \boldsymbol \xi \|^2 + O( t^{1-2\theta}) + O( t^{-\theta} \| \boldsymbol \xi \|) \\
& \le - \left(\theta-\frac 12\right)t^{-1} \| \boldsymbol \xi \|^2 + O ( t^{1-2\theta}).
\end{align*}
A direct integration and $1<\theta <3/2$ yield, for some $C >0$,
\begin{equation*}
\|\boldsymbol \xi(t)\|^{2}\le \left(\frac{T_{\epsilon}}t\right)^{\theta-\frac 12}\|\boldsymbol \xi(T_{\epsilon})\|^{2}+Ct^{-2\theta+2}
\lesssim t^{-2\theta+2},
\end{equation*}
which is~\eqref{eq:th}.
\end{proof}

\section{Construction of multi-solitary waves}
In this Section, we prove Theorem~\ref{th:2}, adapting arguments from~\cite[Section 4]{CMYZ}.
Here, $\mathcal B_\ENE(\delta)$ denotes the open ball  of $H^1\times L^2$ of center $0$ and radius $\delta$ and $\mathcal B_{\RR^{K}}(\delta)$ (respectively, $\bar {\mathcal B}_{\RR^{K}}(\delta)$) denotes the open ball (respectively, closed ball)  of $\RR^K$ of center $0$ and radius $\delta$. Last, $\mathcal S_{\RR^K}( \delta)$ denotes the sphere of $H^1\times L^2$ of center $0$ and radius $\delta$.
We also use the notation from  \S\ref{S:2} and set
\begin{equation*}
\beta := \frac{1}{2 \sqrt{\alpha^2 + \nu_0^2}} = \langle \vec Y^+, \vec Z^+ \rangle^{-1} >0.
\end{equation*}

We recall the following preliminary result (for the proof, see \cite[Lemma~4.1]{CMYZ}).

\begin{lemma}\label{le:W}
Let $(z_{k},\ell_{k})_{k=1, \dots, K} \in \RR^{2K}$ be such that 
\[r = \min (z_{k+1} - z_k , k=1,\dots, K-1)\]
is large enough. There exist linear maps
\begin{equation*}
B:\RR^K \to \RR^K,\quad V:\RR^K\to \RR^K
\end{equation*}
smooth in $(z_{k},\ell_{k})_{k=1, \dots, K}$, satisfying
\begin{equation*}
\| B- \beta \Id\|\lesssim e^{-\frac 1 2 r}, \quad \|V_j\|\lesssim e^{-\frac 1 2 r},
\end{equation*}
and such that the function $W(\boldsymbol a ):\RR \to \RR$ defined by
\begin{equation*}
W(\boldsymbol a)(x) := \sum_{k=1}^K \bigg\{ B_k(\boldsymbol a) Y_k(x)+ V_{k} (\boldsymbol a ) \partial_{x} Q_k(x) \bigg\},
\end{equation*}
for any $\boldsymbol a = (a_1,\dots, a_k)$, 
satisfies, for all $k=1,\dots, K$, 
\begin{equation*}
\langle W(\boldsymbol a),\partial_{x} Q_k\rangle =0,\quad \langle W(\boldsymbol a),Y_k\rangle = \beta a_k.
\end{equation*}
In particular, setting
\begin{equation*}
\vec W(\boldsymbol a)=\begin{pmatrix} W(\boldsymbol a) \\ \nu^+ W(\boldsymbol a) \end{pmatrix} \quad\text{it holds}\quad
\langle \vec W(\boldsymbol a),\vec Z_k^+\rangle= a_k.
\end{equation*}
\end{lemma}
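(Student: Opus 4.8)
The plan is to reduce the construction to inverting an explicit, almost–diagonal Gram-type matrix. Since $W(\boldsymbol a)$ is required to be linear in $\boldsymbol a$ and to lie in the span of $\{Y_k,\partial_x Q_k:k=1,\dots,K\}$, I look for $K\times K$ matrices $B=(B_{kl})$, $V=(V_{kl})$ and set
\[
W(\boldsymbol a)=\sum_{k=1}^K\big[(B\boldsymbol a)_k\,Y_k+(V\boldsymbol a)_k\,\partial_x Q_k\big],\qquad B_k(\boldsymbol a):=(B\boldsymbol a)_k,\quad V_k(\boldsymbol a):=(V\boldsymbol a)_k.
\]
The $2K$ prescribed identities $\langle W(\boldsymbol a),\partial_x Q_j\rangle=0$ and $\langle W(\boldsymbol a),Y_j\rangle=\beta a_j$, imposed for every $\boldsymbol a\in\RR^K$, are then equivalent to the matrix equations
\[
\Pi B+\Gamma V=0,\qquad \Sigma B+\Theta V=\beta\,\Id,
\]
where $\Pi_{jk}=\langle Y_k,\partial_x Q_j\rangle$, $\Gamma_{jk}=\langle\partial_x Q_k,\partial_x Q_j\rangle$, $\Sigma_{jk}=\langle Y_k,Y_j\rangle$, $\Theta_{jk}=\langle\partial_x Q_k,Y_j\rangle$.

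The key step is to estimate these four matrices for $r$ large. Since $\|Y\|_{L^2}=1$ and $\|Q'\|_{L^2}^2=c_1$, one has $\Sigma_{jj}=1$ and $\Gamma_{jj}=c_1$. Crucially, $Y$ is even (it is the ground state of $\LL$, a Schr\"odinger operator with even potential) while $Q'$ is odd, so $\langle Y,Q'\rangle=0$ and hence $\Pi_{jj}=\Theta_{jj}=0$; this is exactly what forces $B$ to be close to $\beta\,\Id$. For $j\neq k$, the exponential localizations of $Q,Q'$ from \eqref{asym:Q} and of $Y$ from Lemma~\ref{le:L}, together with $|z_j-z_k|\geq r$, give $|\Pi_{jk}|+|\Theta_{jk}|+|\Gamma_{jk}|+|\Sigma_{jk}|\lesssim e^{-r/2}$ (the factor $\tfrac12$ comfortably absorbing the polynomial corrections in the overlap integrals). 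Thus $\Gamma=c_1\Id+O(e^{-r/2})$ and $\Sigma=\Id+O(e^{-r/2})$ are invertible for $r$ large, while $\Pi,\Theta=O(e^{-r/2})$.

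Solving the system is then routine: the first equation gives $V=-\Gamma^{-1}\Pi B$, and substituting into the second yields $(\Sigma-\Theta\Gamma^{-1}\Pi)B=\beta\,\Id$. Since $\Sigma-\Theta\Gamma^{-1}\Pi=\Id+O(e^{-r/2})$ is invertible for $r$ large, $B$ and $V$ are uniquely determined, with
\[
B=\beta(\Sigma-\Theta\Gamma^{-1}\Pi)^{-1}=\beta\,\Id+O(e^{-r/2}),\qquad V=-\Gamma^{-1}\Pi B=O(e^{-r/2}),
\]
giving $\|B-\beta\Id\|\lesssim e^{-r/2}$ and $\|V_j\|\lesssim e^{-r/2}$. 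For smoothness, note that $Y_k,\partial_x Q_k$ depend only on $z_k$ (and $\sigma_k$), so each entry of $\Pi,\Gamma,\Sigma,\Theta$ is smooth in $(z_k)$ and matrix inversion is smooth where defined, whence $B,V$ are smooth in $(z_k,\ell_k)_k$. Finally, the "in particular" claim is a one-line check: with $\vec W(\boldsymbol a)=(W(\boldsymbol a),\nu^+W(\boldsymbol a))$ and $\vec Z_k^+=(\zeta^+Y_k,Y_k)$,
\[
\langle\vec W(\boldsymbol a),\vec Z_k^+\rangle=(\zeta^++\nu^+)\langle W(\boldsymbol a),Y_k\rangle=2\sqrt{\alpha^2+\nu_0^2}\,\beta\,a_k=a_k,
\]
using $\zeta^++\nu^+=2\sqrt{\alpha^2+\nu_0^2}$ and $\beta=(2\sqrt{\alpha^2+\nu_0^2})^{-1}$.

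The whole argument is elementary linear algebra; the one place demanding genuine care is the Gram-matrix step, i.e.\ checking that the diagonal blocks dominate — in particular that $\langle Y,Q'\rangle=0$, without which $B-\beta\Id$ would fail to be small — and that all off-diagonal overlaps are $O(e^{-r/2})$ uniformly, so that $\Gamma$, $\Sigma$ and $\Sigma-\Theta\Gamma^{-1}\Pi$ admit uniformly bounded inverses for $r$ large. This is immediate from the explicit exponential decay of $Q$, $Q'$, $Y$, so I do not expect a real obstacle.
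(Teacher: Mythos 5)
Your proof is correct and is essentially the standard Gram-matrix inversion argument that one expects here (the paper itself defers the proof to \cite[Lemma~4.1]{CMYZ}, so there is no in-text proof to compare against). You correctly reduce the $2K$ linear constraints to the matrix system $\Pi B+\Gamma V=0$, $\Sigma B+\Theta V=\beta\Id$, you correctly identify the one non-obvious input, namely $\langle Y,Q'\rangle=0$ (which follows because $Y$ is the nodeless ground state of a Schr\"odinger operator with even potential, hence even, while $Q'$ is odd), and the exponential localisation of $Y,Q,Q'$ gives the off-diagonal smallness needed to invert $\Gamma$, $\Sigma$ and $\Sigma-\Theta\Gamma^{-1}\Pi$ with the stated $O(e^{-r/2})$ control. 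The final check $\langle\vec W(\boldsymbol a),\vec Z_k^+\rangle=(\zeta^++\nu^+)\beta a_k=a_k$ uses exactly $\beta=\langle\vec Y^+,\vec Z^+\rangle^{-1}=(2\sqrt{\alpha^2+\nu_0^2})^{-1}$ as in the paper's definition.
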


%\begin{proof} 
%Defining
%\begin{equation*}
%W(\boldsymbol a)=\sum_{k=1}^K \left\{b_k Y_k+ v_{k} \partial_{x} Q_k \right\},
%\end{equation*}
%our goal is to determine the parameters $b_k$, $v_{k}$ in terms of $a_1,\dots, a_k$.
%Using the relations $\langle Y_k,\partial_{x} Q_k\rangle =0$ and the estimate $\langle \partial_{x} Q_j,\partial_{x} Q_k\rangle =O(e^{-\frac 12 |z_{j}-z_{k}|})$ for any $j \ne k$
%(see~\eqref{tech1}), we observe that the condition $\langle W(\boldsymbol a),\partial_{x} Q_k\rangle=0$ is equivalent to
%a linear relation between the coefficients in the definition of $W(\boldsymbol a)$ of the form
%\begin{equation*}
% \|\partial_{x} Q\|_{L^2}^2 v_{k} =
% \sum_{j \ne k} ( O(e^{-\frac 1 2 r}) b_j + O(e^{-\frac 1 2 r})v_{j} ) .
%\end{equation*}
%Similarly, since $\langle Y,Y\rangle=1$ (see Lemma~\ref{le:L}) and $\langle Y_j,Y_k\rangle=O(e^{-\frac 1 2 r})$ for $j \ne k$, the conditions $\langle W(\boldsymbol a),Y_k \rangle=\beta a_k$ write
%\begin{align*}
%b_k &= \beta a_k + \sum_{j \ne k} O \left( e^{-\frac 12 r} \right) b_j + \sum_{j=1}^K O \left( e^{-\frac 12 r} \right) v_{j}. 
%\end{align*}
%The existence and desired properties of $b_k$ and $v_{k}$ for $k=1,\dots, K$ thus follow from inverting a linear system for $r$ large enough.
%\end{proof}

The next proposition and the invariance by translation of~\eqref{nlkg}
imply Theorem~\ref{th:2}.

\begin{proposition}\label{pr:multi}
Let $K\ge 2$ and $\sigma =\pm 1$. For $\delta>0$ small enough, let any
\begin{equation}\label{eq:pr:10}\left\{\begin{aligned}
&( \ell_k(0))_{k=1,\dots, K} \in \mathcal B_{\RR^{K}}(\delta),\\
& (z_k(0))_{k=1, \dots, K} \in \RR^{K} \\
&\qquad \text{satisfying } \min \{ z_{k+1}(0) - z_k(0), k=1, \dots, K-1 \} > 5 |\log \delta|, \\
& \vvep(0)\in \mathcal B_\ENE(\delta) \\&\qquad\text{satisfying } \eqref{ortho} \text{ and } \langle \vvep(0),\vec Z_k^+(0)\rangle=0 \text{ for } k=1,\dots, K.
\end{aligned}\right.\end{equation} 
There exists $\boldsymbol a_\sharp^+(0) = (a_{\sharp,k}^+(0))_{k=1,\dots, K} \in \bar{\mathcal B}_{\RR^K}(\delta^\frac 54)$ such that the solution $\vec{u}_\sharp$ of~\eqref{nlkg} with the initial data
\begin{equation*}
\vec u_\sharp(0)= \sigma \sum_{k=1}^K (-1)^k (Q(\cdot - z_k(0)), 0) + \vec W(\boldsymbol a_\sharp^+(0)) + \vvep(0)
\end{equation*}
is global and satisfies~\eqref{eq:th:2} where, for $k=1, \dots, K$,
$z_k = \bar y_k + y_\sharp + O(t^{-\theta+1})$,
for some $y_\sharp \in \RR$ and  $\bar y_k$ being defined in \eqref{def:y_k}.
\end{proposition}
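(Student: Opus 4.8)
My plan is to run a topological shooting argument on the $K$ unstable directions, fed by the a priori estimates of Sections~\ref{S:2} and~\ref{S:5}. Fix $\delta>0$ small. For each $\boldsymbol a^+=(a_k^+)_{k=1,\dots,K}\in\bar{\mathcal B}_{\RR^K}(\delta^{5/4})$ let $\vec u=\vec u(\boldsymbol a^+)$ be the solution of~\eqref{nlkg} with the initial data of the statement. Since $Y$ is even and $\partial_x Q$ odd, Lemma~\ref{le:W} together with the separation $z_{k+1}(0)-z_k(0)>5|\log\delta|$ shows that, in the decomposition of Lemma~\ref{le:dec} near $t=0$, the parameters are $z_k(0)$, $\ell_k(0)+O(\delta^6)$, with $\|\vve(0)\|_\ENE=O(\delta)$, $a_k^-(0)=O(\delta)$, $F_+(0)=0$ (alternate signs being built into the ansatz), $F_-(0)\lesssim\delta^5$, and $a_k^+(0)=a_k^++O(\delta^6)$; in particular $b(0)=\|\boldsymbol a^+\|^2+O(\delta^6)$. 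Let $T^{**}(\boldsymbol a^+)\le T_{\max}$ be the maximal time on which the decomposition of Lemma~\ref{le:dec} is valid with parameters in the admissible range, and set
\[
T^*(\boldsymbol a^+)=\sup\{t\in[0,T^{**}):\ b(s)\le\delta^{5/2}\ \text{for all}\ s\in[0,t]\}.
\]

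First I would show that the bound $b\le\delta^{5/2}$ on $[0,T^*(\boldsymbol a^+))$ forces all the remaining estimates. This is a re-run of the proof of Proposition~\ref{pr:unif} — whose ``Estimate on $b$'' step is now superfluous, the bound on $b$ being part of the bootstrap — giving $\|\vve\|_\ENE+\sum_k|\ell_k|\lesssim\delta$ and $F_-\lesssim\delta^5$, hence $\mathcal N\lesssim\delta$; in particular all distances $r_k$ stay large, so the decomposition never degenerates while $b\le\delta^{5/2}$ and therefore $T^{**}(\boldsymbol a^+)>T^*(\boldsymbol a^+)$ as soon as $T^*(\boldsymbol a^+)<\infty$. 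A re-run of the proof of Proposition~\ref{prop:N_decay} (the modified quantity $\tilde b=b-\omega\mathcal F$ and the estimates~\eqref{eq:damped},~\eqref{eq:Mbis},~\eqref{eq:RF-}, valid here with $F_+\equiv0$) then gives, for $t$ large, $\mathcal N(t)\lesssim t^{-1}$ and $F_-(t)\lesssim t^{-1}$.

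Suppose, for contradiction, that $T^*(\boldsymbol a^+)<\infty$ for every $\boldsymbol a^+\in\bar{\mathcal B}_{\RR^K}(\delta^{5/4})$. By the previous step $T^*<T^{**}$, so the defining supremum is attained only through $b(T^*)=\delta^{5/2}$, and by~\eqref{eq:b} together with $\mathcal N(T^*)\lesssim\delta$, $F_-(T^*)\lesssim\delta^5$,
\[
\frac{\ud}{\ud t}b(T^*)=2\nu^+b(T^*)+O(\mathcal N^3+\mathcal N F_-)=2\nu^+\delta^{5/2}+O(\delta^3)>0,
\]
since $\nu^+>0$. Hence $b$ crosses the value $\delta^{5/2}$ strictly increasingly, so $T^*$ is a transversal exit time; by continuous dependence of $\vec u$ on its data and smoothness of the modulation parameters in Lemma~\ref{le:dec}, the map $\boldsymbol a^+\mapsto T^*(\boldsymbol a^+)$ is continuous, and for $\boldsymbol a^+$ on the sphere $\mathcal S_{\RR^K}(\delta^{5/4})$ one has $b(0)=\delta^{5/2}+O(\delta^6)$ with $\tfrac{\ud}{\ud t}b(0)>0$, so $T^*(\boldsymbol a^+)\to0$ there. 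Thus $\Psi(\boldsymbol a^+):=\boldsymbol a^+(T^*(\boldsymbol a^+))$ is a continuous map $\bar{\mathcal B}_{\RR^K}(\delta^{5/4})\to\mathcal S_{\RR^K}(\delta^{5/4})$ whose restriction to $\mathcal S_{\RR^K}(\delta^{5/4})$ is homotopic to the identity (the $O(\delta^6)$ discrepancy between $\boldsymbol a^+$ and the modulation value $a^+(0)$ being harmless), \emph{i.e.}\ a retraction of the ball onto its boundary, contradicting Brouwer's theorem. Therefore some $\boldsymbol a^+_\sharp(0)\in\bar{\mathcal B}_{\RR^K}(\delta^{5/4})$ satisfies $T^*(\boldsymbol a^+_\sharp(0))=\infty$.

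For this $\boldsymbol a^+_\sharp(0)$, the solution $\vec u_\sharp$ satisfies $b\le\delta^{5/2}$ and $F_+\equiv0$ for all $t$, stays bounded in $H^1\times L^2$ (hence $T_{\max}=\infty$), and by the second step $\mathcal N(t)\lesssim t^{-1}$, $F_-(t)\lesssim t^{-1}$ for $t$ large. From here the argument coincides with the multi-soliton case of Section~\ref{S:5}: the variables $y_k=z_k+\ell_k/(2\alpha)$ solve the perturbed system~\eqref{eq:edo}, the centered variables $\xi_k$ solve~\eqref{eq:edoxi}, the boundedness lemma gives $|\xi_k|\le M$, and Steps~1--2 of the proof of~\eqref{eq:th} (using the coercivity~\eqref{coer:dPhi} of $D\Phi(0)$ and the convergence~\eqref{Phi:conv} of Proposition~\ref{prop:Phi}) give $\|\boldsymbol\xi(t)\|^2\lesssim t^{-2\theta+2}$, \emph{i.e.}\ $y_k(t)=\bar y_k(t)+y_\sharp+O(t^{-\theta+1})$ with $y_\sharp:=\lim_{t\to\infty}\frac1K\sum_j y_j(t)$. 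Since $|\ell_k|\lesssim\mathcal N\lesssim t^{-1}$ this yields $z_k=\bar y_k+y_\sharp+O(t^{-\theta+1})$, and~\eqref{def:ee} with $\|\vve\|_\ENE+\sum_k|\ell_k|\lesssim t^{-1}$ for large $t$ (and the energy-space bound for small $t$) gives~\eqref{eq:th:2}. I expect the main obstacle to be the second step — carrying over the Liapunov/bootstrap machinery of Propositions~\ref{pr:unif} and~\ref{prop:N_decay} to the shooting framework where the bound on $b$ is merely a bootstrap assumption — together with the careful bookkeeping of the modulation parameters at $t=0$ needed so that the boundary behaviour of $\Psi$ is, up to a negligible perturbation, the identity.
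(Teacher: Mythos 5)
Your proposal follows the same route as the paper: a shooting argument on the unstable directions $\boldsymbol a^+\in\bar{\mathcal B}_{\RR^K}(\delta^{5/4})$, a bootstrap on $\mathcal N,F,b$ in which the ``Estimate on $b$'' step of Proposition~\ref{pr:unif} is replaced by the bootstrap assumption itself, a transversality argument at $b=\delta^{5/2}$ furnishing a continuous exit-time map, the no-retraction theorem for the ball (equivalently Brouwer) to find the good shooting parameter, and then an application of Section~\ref{S:5} to obtain $\mathcal N\lesssim t^{-1}$ and the $y_k=\bar y_k+y_\sharp+O(t^{-\theta+1})$ asymptotics. So the structure, the Liapunov/bootstrap machinery, and the topological step all coincide with the paper's.

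The one place where you diverge is the bookkeeping of the modulation at $t=0$: you allow $O(\delta^6)$ discrepancies between the prescribed $(z_k(0),\ell_k(0),\boldsymbol a^+(0))$ and the modulated values, and correspondingly argue that the restriction of the exit map to the sphere is only homotopic to the identity. The paper instead arranges, through the orthogonality conditions built into $\vec W$ (Lemma~\ref{le:W}) and the assumption $\langle\vvep(0),\vec Z_k^+(0)\rangle=0$, that the initial data is modulated exactly, so that $a_k^+(0)$ from Lemma~\ref{le:dec} coincides with the shooting parameter, $b(0)=\|\boldsymbol a^+(0)\|^2$ is exact, $T_*=0$ on the sphere, and the restriction of the exit map to $\mathcal S_{\RR^K}(\delta^{5/4})$ is \emph{literally} the identity. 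Your extra $O(\delta^6)$ error and the homotopy step are thus superfluous but harmless; if anything, they make the argument more robust to the precise normalization of the ansatz. The rest is the same.
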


\begin{remark}
From the proof of Proposition~\ref{pr:multi}, there exist even solutions of \eqref{nlkg} with any odd number $K\geq 3$ of
solitary waves.
\end{remark}

\begin{proof}
Given $\boldsymbol a^+(0) = (a_{k}^+(0))_{k=1,\dots, K} \in \bar{\mathcal B}_{\RR^K}(\delta^\frac 54)$, we consider the solution $\vec u(t)$ of \eqref{nlkg} with initial data
\[ \vec u(0)= \sigma \sum_{k=1}^K (-1)^k (Q(\cdot - z_k(0)), 0) + \vec W(\boldsymbol a^+(0)) + \vvep(0). \]

\emph{Decomposition.}
For any $t\geq 0$ such that $\vec u(t)$ is defined and satisfies~\eqref{for:dec}, we consider  its decomposition according to Lemma~\ref{le:dec}. Following \S\ref{S:2.4}, we introduce the notation $y_k$ ($k=1,\dots, K$), $\mathcal N$, $\mathcal M$ and
\[
F = \sum_{k=1}^{K-1} e^{ - (y_{k+1} - y_k)},
\quad R = \frac{1}{F} \exp ( - 3 \lambda^{-1} \mathcal M)
\]
(due to the choice of signs in the decomposition of $\vec u(0)$, $F = F_-$ and $F_+=0$). Note that by the properties of the function $W$ in Lemma~\ref{le:W} and the orthogonality properties \eqref{ortho} of $\vvep(0)$ assumed in~\eqref{eq:pr:10}, the initial data $\vec u(0)$ is modulated. Indeed, $(z_k(0),\ell_k(0))_{k=1,\dots, K}$ and
\begin{equation*}
\vve(0)=\vec W(\boldsymbol a^+(0)) + \vvep(0),
\end{equation*}
are the parameters of the decomposition of $\vec u(0)$.
 In particular, it holds from~\eqref{eq:pr:10}
\begin{equation*}
\mathcal N(0)\lesssim \delta,\quad F(0)\lesssim \delta^2.
\end{equation*}
Moreover, by Lemma~\ref{le:W}, for $k=1,\cdots, K$, it holds
\begin{equation*}
\langle \vve(0),\vec Z_k^+(0)\rangle = \langle \vec W(\boldsymbol a^+(0)), Z_k^+ \rangle = a_k^+(0),
\end{equation*}
which is consistent with the definition of $a_k^+$ in (v) of Lemma~\ref{le:dec}.

\emph{Bootstrap estimates.}
We introduce the following bootstrap estimates
\begin{equation}\label{BS:2}
\mathcal N\leq \delta^{\frac 34},\quad
F \leq \delta^{\frac 32},\quad
b\leq \delta^{\frac 52}
\end{equation}
and we set $\TS=\sup\left\{ t\in [0,\infty)\hbox{ such that \eqref{BS:2} holds on $[0,t]$}\right\}\geq 0$.

\emph{Estimates on the damped components.}
The estimate on $\mathcal N$ is strictly improved on $[0,\TS]$ as in the proof of Proposition~\ref{pr:unif}.
In particular, $\mathcal N\lesssim \delta$ on $[0,\TS]$.

\emph{Estimate on the distance.}
Inequality \eqref{eq:RF-} rewrites
\begin{equation*}
\frac{\ud R}{\ud t} \ge \left( \lambda + \frac{2}{\lambda F} (\mathcal N^2 - F^{\theta}) \right) \exp( - 3 \lambda^{-1} \mathcal M).
\end{equation*}
Now for $t \in [0,\TS]$, in view of \eqref{BS:2}, $\exp( - 3 \lambda^{-1} \mathcal M) = 1 + O(\mathcal N^2) = 1 + O(\delta^{3/2})$ and $F^{\theta-1} = O(\delta^{3(\theta-1)/2}) \le \frac{1}{6\lambda^2}$ so that
\[ \frac{\ud R}{\ud t} \ge \frac{2}{3} \lambda (1- C \delta^{3/2}) \ge \frac{\lambda}{2}. \]
Integrating on $[0,t]\subset [0,T_{*}]$, it holds
$R(t) \ge R(0) + \frac{\lambda}{2} t$.
Hence, as we also have $\exp( 3 \lambda^{-1} \mathcal M) =1 + O(\delta^{3/2})$ and $R(0)^{-1} \le 2F(0) \lesssim \delta^2$, 
\begin{equation*}
F = \frac{1}{R} \exp( 3 \lambda^{-1} \mathcal M) \le \frac{1}{R(0) + \frac{\lambda}{2} t} (1 + O(\delta^{3/2})) \lesssim \delta^{2}.
\end{equation*}
This strictly improves the estimate of $F$ in \eqref{BS:2}, and as in the proof of Proposition~\ref{prop:N_decay}, we also obtain the decay $F \lesssim t^{-1}$.
 
\emph{Transversality condition.}
From \eqref{eq:b} and $\mathcal N\lesssim \delta$, we observe that for any time $t\in [0,\TS]$ where it holds $b(t)=\delta^{\frac 52}$, we have
\begin{equation*}
\frac \ud{\ud t} b(t) \geq 2\nu^+ b(t) - C \delta^{3}
\geq 2\nu^+ \delta^{\frac 52} - C \delta^{3} \geq \nu^+ \delta^{\frac 52} >0,
\end{equation*}
for $\delta>0$ small enough. This transversality condition is enough to justify the existence of at least a point $\boldsymbol a_\sharp^+(0) \in 
\bar{\mathcal B}_{\RR^K}( \delta^\frac 54)$ such that $\TS=\infty$.

Indeed, for the sake of contradiction assume that for all $\boldsymbol a^+(0) \in \bar{\mathcal B}_{\RR^K}( \delta^\frac 54)$, it holds $\TS<\infty$.
Then, a contradiction follows from the following observations (see for instance more details in \cite{CMMgn} or in~\cite[Section 3.1]{CMkg}).

\emph{Continuity of $\TS$.} The above transversality condition implies that the map 
\begin{equation*}
\boldsymbol a^+(0) \in \bar{\mathcal B}_{\RR^K}(\delta^\frac 54)
\mapsto \TS \in [0,\infty)
\end{equation*}
is continuous and that $\TS=0$ for $\boldsymbol a^+(0) \in \mathcal S_{\RR^K}(\delta^\frac 54)$.

\emph{Construction of a retraction.} As a consequence, the map
\begin{equation*}
\boldsymbol a^+(0) \in \bar{\mathcal B}_{\RR^K}( \delta^\frac 54)
\mapsto \boldsymbol a^+(\TS) \in \mathcal S_{\RR^K}( \delta^\frac 54)
\end{equation*}
is continuous and its restriction to the sphere $\mathcal S_{\RR^K}( \delta^\frac 54)$ is the identity.

This is a contradiction with the no retraction theorem for continuous maps from the ball to the sphere.

At this point, we have proved the existence of $\boldsymbol a_\sharp^+(0) \in 
\bar{\mathcal B}_{\RR^K}( \delta^\frac 54)$, associated with a global solution $\vec u_\sharp \in \mathcal C([0,\infty), H^1)$ of \eqref{nlkg}, which also satisfies \eqref{BS:2} for all $t \ge 0$ (and $F=F_-$, $F_+=0$). Applying the results of Section \ref{S:5} to $\vec u_\sharp$, we infer that $\mathcal N\lesssim t^{-1}$ (so that \eqref{eq:th:2} holds) and that there exists $y_\sharp \in \RR$ with, for $k=1, \dots, K$,
\[ y_k = \bar y_k + y_\sharp + O( t^{-\theta+1}) \]
As $z_k = y_k - \frac{\ell_k}{2\alpha}$ and $|\ell_k| \le \mathcal N \lesssim t^{-1}$, $u_\sharp$ has the requested properties and the proof is complete.
\end{proof}

\end{document}